\newtheorem{theorem}{Theorem}     
\newtheorem{lemma}[theorem]{Lemma}
\newtheorem{conjecture}[theorem]{Conjecture}
\newtheorem{definition}[theorem]{Definition}
\newtheorem{claim}{Claim}
\newcommand*{\myproofname}{Proof}
\newenvironment{myproof}[1][\myproofname]{\begin{proof}[#1]}{\end{proof}}
\newcommand{\dom}[1]{\gamma\left(#1\right)}
\title{Domination of subcubic planar graphs with large girth}
\author{Eun-Kyung Cho$^{a}$\thanks{Email: \texttt{ekcho2020@gmail.com}} \and 
	Eric Culver$^{b}$\thanks{Email: \texttt{eric.culver@mathematics.byu.edu}} \and 
	Stephen G. Hartke$^{c}$\thanks{Email: \texttt{stephen.hartke@ucdenver.edu}} \and
	Vesna Iršič$^{d,e}$\thanks{Email: \texttt{vesna.irsic@fmf.uni-lj.si}}}
\begin{document}
	\maketitle
	
	\begin{center}
		$^a$ Hankuk University of Foreign Studies, Republic of Korea\\
		\medskip
		
		$^b$ Brigham Young University, USA\\
		\medskip
		
		$^c$ University of Colorado Denver, USA\\
		\medskip
		
		$^d$ University of Ljubljana, Slovenia\\
		\medskip
		
		$^e$ Institute of Mathematics, Physics and Mechanics, Slovenia\\
		\medskip
		
	\end{center}
	
	\begin{abstract}
		Since Reed conjectured in 1996 that the domination number of a connected cubic graph of order $n$ is at most $\lceil \frac13 n \rceil$, the domination number of cubic graphs has been extensively studied. It is now known that the conjecture is false in general, but Henning and Dorbec showed that it holds for graphs with girth at least $9$. 
		Zhu and Wu stated an analogous conjecture for 2-connected cubic planar graphs.
		
		In this paper, we present a new upper bound for the domination number of subcubic planar graphs: if $G$ is a subcubic planar graph with girth at least 8, then $\gamma(G) < n_0 + \frac{3}{4} n_1 + \frac{11}{20} n_2 +  \frac{7}{20} n_3$, where $n_i$ denotes the number of vertices in $G$ of degree $i$, for $i \in \{0,1,2,3\}$. We also prove that if $G$ is a subcubic planar graph with girth at least 9, then $\gamma(G) <  n_0 + \frac{13}{17} n_1 + \frac{9}{17} n_2 + \frac{6}{17} n_3$.
	\end{abstract}
	
	\section{Introduction}
	
	A \emph{dominating set} of $G$ is a set $D$ of vertices of $G$ such that every vertex of $G$ is either in $D$ or is adjacent to a vertex in $D$.
	The \emph{domination number} of $G$ is the size of the smallest dominating set of $G$, and is denoted by $\gamma(G)$.
	
	For a vertex $v$ in $G$, $v$ dominates the vertices in its closed neighborhood $N_G[v]$, so the larger $\deg_G(v)$, the greater the number of vertices dominated by $v$. 
	Thus, it is reasonable to infer that the greater $\delta(G)$, the smaller $\gamma(G)$.
	In 1962, Ore~\cite{ore1962theory} proved that a graph $G$ with $\delta(G) \ge 1$ satisfies $\gamma(G) \le \frac{1}{2}|V(G)|$.
	This bound is tight; for example, the equality is attained by an infinite family of graphs obtained by attaching a leaf to each vertex of an arbitrary graph. 
	In 1973, Blank~\cite{blank1973estimate} (and in 1989, independently McCuaig and Shepherd~\cite{McCuaig+1989}) proved that a connected graph with $\delta(G) \ge 2$ and $|V(G)| \ge 8$ satisfies $\gamma(G) \le \frac{2}{5}|V(G)|$.
	Furthermore, McCuaig and Shepherd~\cite{McCuaig+1989} also characterized those connected graphs $G$ with $\delta(G) \ge 2$ such that any dominating set contains at least $\frac{2}{5}|V(G)|$ vertices.
	In 1996, Reed~\cite{Reed1996} proved that a graph with $\delta(G) \ge 3$ satisfies $\gamma(G) \le \frac{3}{8}|V(G)|$.  
	Cubic graphs attaining the bound can be found in~\cite{goddard+2012, Reed1996}, but no arbitrarily large family of connected graphs that attain the bound has been found.
	
	In \cite{Reed1996}, Reed also conjectured that a connected cubic graph $G$ satisfies $\gamma(G) \le \lceil \frac{1}{3}|V(G)| \rceil$.
	This conjecture attracted much attention and has been a popular topic for a long time.
	However, in 2005, Kostochka and Stodolsky~\cite{Kostochka+2005} disproved the conjecture by constructing an infinite family of connected cubic graphs $\{G_k\}_{k=1}^{\infty}$ such that $|V(G_k)| = 46k$ and $\gamma(G) \ge 16k$, which implies $\frac{\gamma(G_k)}{|V(G_k)|} \ge \frac{1}{3} + \frac{1}{69}$ for all $k \ge 1$. The graphs in this family have girth $4$ and are not $2$-connected. However, Reed's conjecture is also false for $2$-connected cubic graphs; counterexamples were presented independently by Stodolsky~\cite{Stodolsky2008} and Kelmans~\cite{kelmans2006counterexamples}. Note that these counterexamples have girth $4$.
	These counterexamples raised two main questions: 
	\begin{itemize}
		\item Which is the correct bound for the domination number of cubic graphs? 
		\item Which bound holds if an additional condition on the girth of the graph is given?
	\end{itemize}
	
	For cubic graph with order greater than $8$, $\gamma(G) \le \frac{5}{14}|V(G)|$ was proved by Kostochka and Stocker~\cite{Kostochka+2009b}, and later improved by Cho, Choi, Kwon, and Park~\cite{cho2021tight} to independent domination when there is no $4$-cycle in a graph.
	These results are tight, as shown by a generalized Petersen graph on $14$ vertices.
	
	Obtaining good bounds for the domination number of planar graphs has been of separate interest. MacGillivray and Seyffarth~\cite{macgillivray+1996} showed that determining the domination number of a graph is NP-hard even for planar graphs. Upper bounds for the domination number of planar graphs of diameter $2$ and $3$ have been studied in~\cite{dorfling+2006, goddard+2002, macgillivray+1996}.
	Zhu and Wu~\cite{zhu+2015} made a conjecture on the domination number of cubic planar graphs.
	
	\begin{conjecture}[{\cite{zhu+2015}}]
		\label{conj:2concubicplanar}
		If $G$ is a $2$-connected cubic planar graph, then $\gamma(G) \leq \frac{|V(G)|}{3}$.
	\end{conjecture}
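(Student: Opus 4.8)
The plan is to attack the conjecture by the discharging method, using a charge-conservation argument to bound the size of an explicitly constructed dominating set. I would give every vertex an initial charge of $1$, so that the total charge is $|V(G)|$, build a dominating set $D$, and then let each vertex outside $D$ send its charge to some vertex of $D$ that dominates it, while each vertex of $D$ retains its own charge. If the construction can be arranged so that every vertex of $D$ accumulates charge at least $3$, then $3|D| \le \sum_{v \in D}\operatorname{charge}(v) \le |V(G)|$, which yields $\gamma(G) \le \frac{|V(G)|}{3}$ as desired. Since $G$ is cubic, Euler's formula gives $|E(G)| = \frac32|V(G)|$ and $|F(G)| = \frac12|V(G)| + 2$, so the faces have average length $\frac{6|V(G)|}{|V(G)|+4} < 6$; this forces an abundance of short faces, and the heart of the scheme is to place dominators at short faces, where a single vertex can dominate four vertices at once and hence easily collect the required charge, generating the surplus needed to offset the sparser regions of the graph.

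First I would carry out a reducibility analysis of short faces, using $2$-connectivity to control how such configurations attach to the rest of the graph. The natural reductions proceed along triangular and quadrilateral faces: in a cubic graph a vertex of such a face dominates four vertices simultaneously, and I would formalise this by dominating the face with one well-chosen vertex, deleting a bounded gadget around it, and applying induction to the smaller graph obtained. The subtlety is that contractions and deletions in cubic graphs can create multi-edges or destroy $2$-connectivity, so I would verify at each step that $2$-connectivity and planarity are preserved — likely replacing raw contraction by the removal of a bounded gadget together with the insertion of at most one or two vertices into $D$ — and carefully track the change in $|V(G)|$ against the change in the target bound.

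The main obstacle is the absence of any girth hypothesis. The known counterexamples to Reed's conjecture all have girth $4$, so a valid proof must extract genuine savings from the combination of planarity and $2$-connectivity precisely in the presence of triangles and quadrilaterals; the difficulty is that adjacent short faces share vertices, and overlapping closed neighbourhoods are exactly what drives the domination ratio up toward — and, for non-planar cubic graphs, past — $\frac13$. Even granting that the short-face reductions eliminate girth $3$ and $4$, one is left with faces of length $5$, $6$, and $7$ that the girth-$8$ and girth-$9$ analyses of this paper do not cover, and at girth $8$ those analyses reach only the coefficient $\frac{7}{20} = 0.35$ on cubic graphs, still short of the conjectured $\frac13 \approx 0.333$. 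Closing these two gaps simultaneously is the crux, and I suspect it cannot be done by local discharging alone; a plausible remedy is to augment the argument with a global ingredient — for instance a fractional or linear-programming relaxation of $\gamma(G)$ together with an integrality-gap estimate specific to $2$-connected cubic planar graphs — which I would develop in tandem with the local reductions.
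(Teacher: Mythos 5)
The statement you were asked about is Conjecture~\ref{conj:2concubicplanar}, which the paper attributes to Zhu and Wu and explicitly treats as \emph{open}: no proof is given anywhere in the paper, and the authors' own main results (Theorems~\ref{thm:main} and~\ref{thm:alternative}) only reach the coefficient $\frac{7}{20}$ on $3$-vertices under a girth-at-least-$8$ hypothesis, a hypothesis under which no cubic planar graph even exists. So there is no proof in the paper to compare yours against, and a correct complete argument here would be a new research result, not a reconstruction.

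Your submission is a proof plan rather than a proof, and it has a genuine, self-acknowledged gap at its core. You never exhibit the reducible configurations for triangular and quadrilateral faces, never verify that the proposed deletions preserve $2$-connectivity and planarity, and never establish the key quantitative claim that every dominator can be made to collect charge at least $3$. The averaging step (faces have average length below $6$ by Euler's formula) is correct but does not by itself produce that property: the danger is precisely the one you name, namely overlapping closed neighbourhoods around adjacent short faces, and the known girth-$4$ counterexamples to Reed's conjecture show that $4$-cycles alone can push the ratio above $\frac{1}{3}$ unless planarity is exploited in some way beyond Euler's formula. Your final paragraph concedes that local discharging probably cannot close this and gestures at an unspecified ``global ingredient'' (an LP relaxation with an integrality-gap bound) that is not developed. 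As it stands, the argument establishes nothing beyond what is already known, and the conjecture remains open.
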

	
	We do not know of any counterexamples to the conjecture when the condition of 2-connectedness is removed. The conjecture might be true without any hypothesis on the graph connectivity.
	
	The problem has also been studied for total domination, and Henning and McCoy~\cite{henning+2009} showed that planar graphs of diameter $2$ have total domination number $3$. More general upper bounds were obtained for the prefect Italian domination number of planar graphs~\cite{lauri+2020}, the global domination number of planar graphs~\cite{enciso+2008}, the power domination number of maximal planar graphs~\cite{dorbec+2019}, and the independent domination number of $2$-connected planar graphs~\cite{abrishami+2019}.
	Moreover, Abrishami, Henning, and Rahbarnia~\cite{abrishami+2019} proposed a weaker version of Conjecture~\ref{conj:2concubicplanar}.
	Note that a bipartite cubic graph is $2$-connected.
	
	\begin{conjecture}[{\cite{abrishami+2019}}]
		\label{conj:theother}
		If $G$ is a bipartite cubic planar graph, then $\gamma(G) \leq \frac{|V(G)|}{3}$.
	\end{conjecture}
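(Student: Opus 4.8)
The plan is to proceed by a minimal-counterexample argument coupled with discharging, which is the standard framework for domination bounds in sparse planar graphs. Suppose the conjecture fails and let $G$ be a bipartite cubic planar graph with $\gamma(G) > \tfrac13|V(G)|$ and $|V(G)|$ as small as possible. Since $\gamma$ and the order are additive over connected components, we may assume $G$ is connected, and then $G$ is $2$-connected as noted above, so every face of a fixed plane embedding is bounded by a cycle. Bipartiteness forces each face to have even length, hence length at least $4$; combined with cubicity ($2|E(G)| = 3|V(G)|$) and Euler's formula this fixes $|F(G)| = \tfrac12|V(G)| + 2$ and guarantees an abundance of $4$- and $6$-faces.

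The heart of the argument would be a list of reducible configurations: small subgraphs $H$ — typically clusters of short faces — such that, letting $G'$ be a smaller bipartite cubic planar graph obtained from $G$ by deleting or contracting $H$, any minimum dominating set of $G'$ extends to a dominating set of $G$ by adding at most $\tfrac13|V(H)|$ vertices. If every such $H$ is reducible, then by minimality $\gamma(G') \le \tfrac13|V(G')|$, and the extension yields $\gamma(G) \le \tfrac13|V(G)|$, a contradiction. First I would establish these reductions one configuration at a time, in each case checking that $G'$ stays in the class (bipartite, cubic, planar) and verifying the $\tfrac13$ accounting for the extension rule. I would then run discharging on the charges $\mu(v) = \deg_G(v) - 4 = -1$ and $\mu(f) = \ell(f) - 4 \ge 0$, which sum to $-8$ by Euler's formula, redistributing charge from large faces to incident vertices so that a graph avoiding every reducible configuration is forced to finish with total charge at least $0$ — contradicting the value $-8$ and thereby completing the argument.

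The real difficulty, and the reason this statement is still only a conjecture, lives in the reducibility step under such a weak girth hypothesis. Two obstructions compound each other: keeping $G'$ cubic and bipartite after excising a cluster of short faces is delicate, since naive deletion or contraction tends to create vertices of the wrong degree or odd cycles; and, more fundamentally, with girth as low as $4$ the distance-balls around a vertex overlap heavily, so a cubic vertex sitting on one or two $4$-faces dominates far fewer than four new vertices. This destroys the clean amortized bookkeeping — ``each chosen vertex pays for three'' — that underlies the girth $\ge 9$ theorem of Henning and Dorbec, precisely on the dense clusters of short faces that bipartite planarity allows. Proving that every such cluster is either reducible or collectively controllable by the discharging is the crux, and it is exactly this short-cycle obstruction that the present paper avoids by assuming girth at least $8$ or $9$; I would therefore expect the bipartite case to need either markedly stronger reducible configurations or a genuinely new global idea beyond purely local discharging.
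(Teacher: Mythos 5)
This statement is an open conjecture, not a theorem of the paper: the authors merely quote it from Abrishami, Henning, and Rahbarnia, noting only that it is best possible and has been verified for graphs on at most $24$ vertices. There is no proof in the paper to compare against, and your submission is not a proof either. What you have written is a description of the standard minimal-counterexample-plus-discharging template (with correct Euler-formula bookkeeping: $|F(G)|=\tfrac12|V(G)|+2$, and initial charges $\deg_G(v)-4$ and $\ell(f)-4$ summing to $-8$), followed by an accurate explanation of why the template does not currently close. The entire mathematical content of a proof --- the explicit list of reducible configurations, the verification that each reduction keeps $G'$ bipartite, cubic, and planar while extending a dominating set at cost at most $\tfrac13|V(H)|$, and the discharging rules that derive a contradiction when no configuration appears --- is absent. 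You acknowledge this yourself when you write that the reducibility step ``is the crux'' and that the statement ``is still only a conjecture.''

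To be concrete about where the plan would fail if pushed: with girth $4$ permitted, a vertex chosen into a dominating set near a cluster of $4$-faces dominates strictly fewer than four previously undominated vertices, so the amortized ``one dominator per three vertices'' accounting breaks exactly on the configurations that bipartite cubic planarity forces to exist in abundance. This is the same obstruction that confines the present paper to girth at least $8$, where the Key Lemma's cost calculation relies on neighbors of a small set $S$ having only one neighbor in $S$. No one has exhibited a family of reducible configurations strong enough to cover the girth-$4$ clusters, and until that (or a genuinely non-local idea) is supplied, your outline remains a research program rather than a proof.
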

	
	Conjecture~\ref{conj:theother} is known to be best possible and is true for graphs on at most $24$ vertices~\cite{abrishami+2019}.
	
	Favaron~\cite{favaron92} showed that if $T$ is a tree on $n \geq 2$ vertices with $q$ leaves, then $\gamma(T) \leq \frac{n+q}{3}$. Applying this result to a subcubic tree $T$ gives the following bound:
	$$\gamma(T) \leq \frac23 n_1(T) + \frac13 n_2(T) + \frac13 n_3(T),$$
	where $n_i(G)$ denotes the number of vertices of degree $i$ in $G$.
	
	Finding a girth condition which implies Reed's upper bound has been of great interest to many mathematicians.
	In 2006, Kawarabayashi, Plummer, and Saito~\cite{Kawarabayashi+2006} proved that every $2$-edge-connected cubic graph with girth at least $g$ (where $g$ is divisible by $3$) satisfies $\gamma(G) \le (\frac{1}{3}+\frac{1}{3g+3})|V(G)|$.
	In 2009, Kostochka and Stodolsky~\cite{kostochka2009upper} proved that a cubic graph with order greater than $8$ and girth at least $g$ has $\gamma(G) \le (\frac{1}{3}+\frac{8}{3g^2})|V(G)|$.
	In 2008, L\"{o}wenstein and Rautenbach~\cite{Lowenstein+2008} proved that a cubic graph with girth at least $g \ge 6$ satisfies $\gamma(G) \le (\frac{44}{135}+\frac{82}{135g})|V(G)|$, which proves that the Reed's conjecture is true for graphs with girth at least $83$.
	
	Very recently, further progress towards solving Reed's conjecture for graphs of higher girth has been made. Dorbec and Henning~\cite{dorbec-henning-1/3} have proved that if $G$ is a cubic graph on $n$ vertices of girth at least $6$ with no $7$-cycle and no $8$-cycle, then $\gamma(G) \leq \frac13 n$. Additionally, they also prove that if $G$ is a cubic bipartite graph on $n$ vertices with no $4$-cycle and no $8$-cycle, then $\gamma(G) \leq \frac13 n$. However, we are not aware of progress made specifically for planar graphs or graphs containing $8$-cycles.
	
	In this paper, we made progress on bounding the domination number of subcubic planar graphs with girth at least $8$ by proving the following theorem.
	
	\begin{theorem}
		\label{thm:main}
		If $G$ is a subcubic planar graph with girth at least $8$, then
		\[ 20 \dom{G} < 20 n_0(G) + 15 n_1(G) + 11 n_2(G) + 7 n_3(G). \]
	\end{theorem}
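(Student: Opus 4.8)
The plan is to prove the theorem via the discharging method, which is the standard and most powerful technique for bounds of this weighted form on planar graphs with girth constraints. First I would assume, for contradiction, that $G$ is a minimum counterexample: a subcubic planar graph of girth at least $8$ that minimizes $|V(G)|$ (or some suitable potential) among all graphs violating the inequality. By minimality, $G$ must be connected, and one can argue that $G$ has no vertices of degree $0$ (isolated vertices contribute $n_0$ to both sides and can be removed). The weighted inequality has the flavor of a \emph{charge} assigned to each vertex according to its degree: assign to each vertex $v$ a charge of $20, 15, 11$, or $7$ according to whether $\deg(v)$ is $0, 1, 2$, or $3$. The total charge is the right-hand side, and the goal is to show that $G$ admits a dominating set small enough to make $20\gamma(G)$ strictly less than this total.

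The core of the argument would be a \emph{reducible configurations} analysis. I would identify a list of local subconfigurations (short paths of degree-$2$ vertices, leaves attached to vertices, specific small substructures allowed by the girth-$8$ condition) that cannot appear in a minimum counterexample. For each such configuration, I would contract or delete part of $G$ to obtain a smaller graph $G'$, apply the inductive hypothesis to get a good dominating set $D'$ of $G'$, and then extend $D'$ to a dominating set $D$ of $G$ by adding a controlled number of vertices. The key bookkeeping is that the charge freed up by the deleted/contracted vertices must exceed $20$ times the number of vertices added to $D'$; this is precisely where the specific constants $20, 15, 11, 7$ are calibrated. The girth hypothesis is essential here because it guarantees that the neighborhoods involved in these local moves are ``spread out'' (no short cycles create overlaps that would let a single added vertex fail to dominate enough), and planarity will enter through a global counting argument via Euler's formula.

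After establishing that none of the reducible configurations appears, I would invoke a \emph{discharging phase}: starting from the initial charges, redistribute charge according to a set of discharging rules (typically degree-$3$ vertices send charge to nearby degree-$2$ vertices or to faces), and then use Euler's formula together with the girth-$8$ bound (each face has length at least $8$) to derive that the total charge is too small to be consistent with the structure forced by the absence of reducible configurations. The contradiction arises by showing that in a graph with no reducible configuration, one can \emph{always} build a dominating set witnessing the strict inequality, contradicting the assumption that $G$ was a counterexample.

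The hard part will be twofold. First, assembling a \emph{complete} list of reducible configurations — one must verify that every subcubic planar graph of girth $8$ that avoids all the listed configurations has enough structure (sufficiently many long induced paths of degree-$2$ vertices, or a sparse enough degree-$3$ skeleton) to admit the required dominating set; missing even one configuration breaks the induction. Second, the discharging rules and the extension of $D'$ to $D$ must be engineered so that the constants work out \emph{exactly}, with the girth-$8$ assumption (rather than a weaker girth bound) being exactly what is needed to close the numerical gap. I expect the delicate case analysis around degree-$2$ vertices — how long a path of degree-$2$ vertices can be dominated efficiently, and how the endpoints (degree-$3$ vertices) interact under the girth constraint — to be the principal obstacle, since the coefficients $\tfrac{11}{20}$ and $\tfrac{7}{20}$ leave very little slack.
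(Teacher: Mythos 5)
Your outline correctly identifies the paper's overall strategy: a minimal counterexample, a list of reducible configurations whose reductions are calibrated against the weights $20,15,11,7$, and a final discharging argument using Euler's formula. But as it stands the proposal has no verifiable content, and two of the missing pieces are genuine gaps rather than routine details. First, you never specify the mechanism that turns ``the charge freed up by the deleted vertices must exceed $20$ times the number of vertices added'' into a usable local criterion. The paper does this with a Key Lemma: for a minimal counterexample $G$ and any nonempty $S\subseteq V(G)$, one has $20\gamma(G[S]) > w_G(S) - c_G(S)$, where $w_G(S)$ is the total weight of $S$ and $c_G(S)$ is the increase in weight of $G-S$ caused by degree drops in $N_G(S)\setminus S$. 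All reducibility arguments then become pure arithmetic: choose $S$, compute $\gamma(G[S])$ and $w_G(S)$, and bound the cost $c_G(S)$, using girth (and, for girth exactly $8$, also planarity) to control how many neighbors the vertices of $S$ can share outside $S$. Without an explicit lemma of this type and the accompanying cost bookkeeping, none of your proposed reducible configurations can actually be verified, and the ``exact calibration of constants'' you defer to is precisely the content of the proof.

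Second---and this is why girth $8$ is substantially harder than girth $9$---your plan does not address $8$-faces at all. With the natural discharging (vertex charge $2\deg_G(v)-6$, face charge $\ell(f)-6$, each face sending $1$ to each $2$-vertex on its facial walk), an $8$-face has only $2$ units of spare charge, yet the local reductions only force ``at most three $2$-vertices per $8$-face'': the degree pattern $(2,2,3,3,2,3,3,3)$ (the paper's \emph{bad} $8$-faces) survives every local reduction. The paper must introduce a second discharging rule, in which a neighboring \emph{contributing} face sends charge $1$ to each bad $8$-face incident with it, and then prove a sequence of structural lemmas showing contributing faces can afford this payment (no two bad $8$-faces share a contributing vertex; for a $k$-face incident with $x$ $2$-vertices and $y$ bad $8$-faces one has $x+y\le 2$, $3$, $4$, $\lfloor k/2 \rfloor$ for $k=8,9,10,\ge 11$ respectively). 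This machinery is the bulk of the paper and is exactly what your generic rule (``degree-$3$ vertices send charge to nearby degree-$2$ vertices or to faces'') cannot produce; note also that the paper's charge flows from faces to vertices, not from vertices. A final minor point: the contradiction at the end is not obtained by ``building a dominating set witnessing the strict inequality''---it is purely structural: after discharging, every vertex and face has nonnegative charge, contradicting the total charge of $-12$ forced by Euler's formula. Dominating sets are constructed only inside the reducibility arguments.
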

	
	Note that since the domination number is an integer, $\gamma(G)$ can be bounded from above by the largest integer \emph{strictly smaller} than 
	\[ \frac{1}{20}\left(20 n_0(G) + 15 n_1(G) + 11 n_2(G) + 7 n_3(G)\right) = n_0(G) + \frac{3}{4} n_1(G) + \frac{11}{20} n_2(G) + \frac{7}{20} n_3(G). \]
	We present here two examples of subcubic planar graphs of girth at least $8$ whose domination number is equal to the largest integer strictly smaller than the bound of Theorem~\ref{thm:main}.
	Let $C_8^\star$ and $C_9^\star$ be graphs obtained by attaching a leaf to each vertex of an $8$-cycle and a $9$-cycle, respectively. Clearly, $\gamma(C_8^\star) = 8$, and $n_0(C_8^\star) + \frac{3}{4}n_1(C_8^\star) + \frac{11}{20}n_2(C_8^\star) + \frac{7}{20}n_3(C_8^\star)  =  \frac{44}{5} = 8.8$. Similarly, we can see that $\gamma(C_9^\star) = 9$, and Theorem~\ref{thm:main} gives an upper bound of $\frac{99}{10} = 9.9$.
	
	The rest of the paper is organized as follows. The notation needed is listed in Section~\ref{sec:notation}, and the general framework of the proof of Theorem~\ref{thm:main} is explained in Section~\ref{sec:framework}. For clarity, the proof itself is given in two parts, first we prove Theorem~\ref{thm:main} for graphs with girth at least 9 in Section~\ref{sec:girth9}, and only then we proceed with the more complicated proof for girth at least 8 in Section~\ref{sec:girth8}. In both cases, the proof consists of two parts, listing reducible configurations and finishing with a discharging argument. 
	
	In Section~\ref{sec:weights}, we present an alternative bound for the domination number of subcubic planar graphs, and in Section~\ref{sec:further}, we conclude the paper by discussing possibilities for an upper bound with a different girth condition.
	
	\section{Notation}
	\label{sec:notation}
	
	Throughout this paper, let $G$ be a finite, simple, undirected graph with vertex set $V(G)$ and edge set $E(G)$.
	For $v \in V(G)$, we use $\deg_G(v)$ and $N_G(v)$ to denote the {\em degree} and the {\em neighborhood}, respectively, of $v$ in $G$.
	Let $N_G[v] = N_G(v) \cup \{v\}$ be the {\em closed neighborhood} of $v$ in $G$.
	For $S = \{v_1, \ldots, v_n\}$, we will use $N_G[v_1, \ldots, v_n]$ instead of $N_G[\{v_1, \ldots, v_n\}]$ for simplicity.
	Let $\delta(G)$ be the {\em minimum degree} of $G$, which is the minimum degree among all the vertices in $G$.
	Let $g(G)$ be the {\em girth} of $G$, which is the length of the shortest cycle of $G$.
	When $G$ is a plane graph, we use $F(G)$ to denote the set of faces of $G$, and $\ell(f)$ to denote the length of $f \in F(G)$.
	
	For a positive integer $k$, we use $[k]$ to denote the set $\{1,\ldots, k\}$.
	A vertex of degree $k$ is called a \emph{$k$-vertex}, and a vertex of degree at least $k$ is called a \emph{$k^+$-vertex}. A face of length $k$ is called a \emph{$k$-face}, and a face of length at least $k$ is called a \emph{$k^+$-face}. Let $n_i(G)$ be the number of $i$-vertices in $G$. A \emph{facial walk} of a $k$-face $f$ is a walk in $G$ of length $k$ along the boundary of the face $f$. Note that a facial walk of $f$ visits each vertex incident with $f$ several times.
	For a positive integer $k$, a {\em path} $P_k$ in $G$ with $k$ vertices is a sequence of distinct vertices $v_1, \ldots, v_k$ such that $v_i v_{i+1} \in E(G)$ for $i \in [k-1]$. 
	
	For a graph $G$ and $S \subseteq V(G)$, let $G[S]$ be the graph induced by $S$.
	Also, let $G-S = G[V(G) \setminus S]$.
	
	Let $G$ be a subcubic graph. 
	A {\em sequence} $(i_1, \ldots, i_k)$ in $G$ is a path $v_1, \ldots, v_k$ in $G$ such that $\deg_G(v_j) = i_j$ for all $j \in [k]$.
	Additionally, a {\em sequence} $(i_1, \ldots, i_{k-1}, 3^q_p)$ in $G$ is a sequence $(i_1, \ldots, i_{k-1}, 3)$ formed by $v_1, \ldots, v_k$, where $N_G(v_k) = \{v_{k-1},x,y\}$, $\deg_G(x) = p$, and $\deg_G(y) = q$. 
	For an example, see Figure \ref{fig:seq-ex}.
	
	\begin{figure}[h!!]
		\centering
		\begin{tikzpicture}
			\begin{scope}[shift={(0,0)}]
				\filldraw[fill=black, draw=black] (-2,0) circle (0.05);
				\filldraw[fill=black, draw=black] (-1,0) circle (0.05);
				\filldraw[fill=black, draw=black] (0,0) circle (0.05);
				\filldraw[fill=black, draw=black] (1,0) circle (0.05);
				\filldraw[fill=black, draw=black] (2,0) circle (0.05);
				\draw (-2.5,0) -- (2,0);
				\draw (-1,0) -- (-1,-0.5);
				\draw (1,0) -- (1,-0.5);
				\node at (0,1) {$(2,3,2,3,1)$};
			\end{scope}
			
			\begin{scope}[shift={(7,0)}]
				\filldraw[fill=black, draw=black] (-2,0) circle (0.05);    
				\filldraw[fill=black, draw=black] (-1,0) circle (0.05);
				\filldraw[fill=black, draw=black] (0,0) circle (0.05);
				\filldraw[fill=black, draw=black] (-30:1) circle (0.05);
				\filldraw[fill=black, draw=black] (30:1) circle (0.05);
				\draw (-2.5,0) -- (0,0);
				\draw (0,0) -- (30:1.5);
				\draw (0,0) -- (-30:1);
				\begin{scope}[shift={(-30:1)}]
					\draw (0:0) -- (30:0.5);
					\draw (0:0) -- (-30:0.5);
				\end{scope}
				\node at (-0.5,1){$(2,2,3_3^2)$};
			\end{scope}
		\end{tikzpicture}
		\caption{An example of the sequence $(2,3,2,3,1)$ and an example of the sequence $(2,2,3_3^2)$.}
		\label{fig:seq-ex}
	\end{figure}
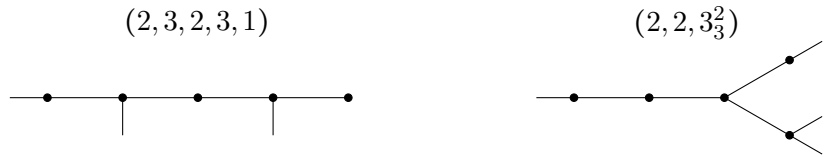
	
	\section{General framework}
	\label{sec:framework}
	
	The main idea of the proof of Theorem \ref{thm:main} is to first find several reducible configurations, and then use the discharging method. The working frame is developed in the rest of this section, the reducible configurations are presented in Sections \ref{sec:reducible9} and \ref{sec:reducible8}, and the discharging process is presented in Sections \ref{sec:discharging9} and \ref{sec:discharging8}.
	
	For each $v \in V(G)$, let $w_G(v)$ be the {\it weight} of $v$ on $G$, defined as follows:
	\[
	w_G(v) = 
	\begin{cases}
		20 &\text{if } \deg_G(v) = 0, \\
		15 &\text{if } \deg_G(v) = 1, \\
		11 &\text{if } \deg_G(v) = 2, \\
		7 &\text{if } \deg_G(v) = 3.
	\end{cases}
	\]
	
	For $\emptyset \neq S \subseteq V(G)$ and a subgraph $H$ of $G$, let $w_G(S) = \sum_{v \in S} w_G(v)$, $w_G(H) = w_G(V(H))$, and $c_G(S) = w_{G-S}(G-S)-w_G(G-S)$.
	Note that $c_G(S) \ge 0$ and $w_G(G) = 20n_0(G) + 15 n_1(G)+11n_2(G) + 7n_3(G)$.
	
	From now on, let $G$ be a minimal counterexample (with respect to the number of vertices) to the Theorem \ref{thm:main}. Thus, clearly $20 \gamma(G) \geq w_G(G)$, and $20 \gamma(H) < w_H(H)$ for every proper subgraph $H$ of $G$.
	
	\begin{lemma}[Key Lemma]
		\label{lem:key}
		For $\emptyset \neq S \subseteq V(G)$, $20 \gamma(G[S]) > w_G(S) - c_G(S)$.    
	\end{lemma}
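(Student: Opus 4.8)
The plan is to play the two defining properties of the minimal counterexample $G$ against each other across the partition of $V(G)$ into $S$ and its complement. Being a counterexample to the (strict) theorem, $G$ satisfies the non-strict lower bound $20\gamma(G) \ge w_G(G)$; being minimal, every \emph{proper} subgraph $H$ of $G$ satisfies the strict upper bound $20\gamma(H) < w_H(H)$. The set $S$ induces $G[S]$, while its complement induces $H := G-S$. My first step is to observe that, since $S \neq \emptyset$, the graph $H$ is a proper subgraph of $G$, so the minimality hypothesis applies to give $20\gamma(G-S) < w_{G-S}(G-S)$.

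The central structural step is the subadditivity of the domination number over this partition. Taking a minimum dominating set $D_1$ of $G[S]$ and a minimum dominating set $D_2$ of $H$, I claim $D_1 \cup D_2$ dominates all of $G$: every vertex of $S$ is dominated within $G[S]$ and every vertex of $V(G) \setminus S$ is dominated within $H$, and every edge used for this lies in $G$. Hence $\gamma(G) \le \gamma(G[S]) + \gamma(G-S)$.

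Combining the three inequalities, I would chain
\[ w_G(G) \le 20\gamma(G) \le 20\gamma(G[S]) + 20\gamma(G-S) < 20\gamma(G[S]) + w_{G-S}(G-S), \]
and solve for $20\gamma(G[S])$ to get $20\gamma(G[S]) > w_G(G) - w_{G-S}(G-S)$. It then remains to identify the right-hand side with $w_G(S) - c_G(S)$. Since the weight function is additive over vertices, $w_G(G) = w_G(S) + w_G(G-S)$, so
\[ w_G(G) - w_{G-S}(G-S) = w_G(S) - \bigl(w_{G-S}(G-S) - w_G(G-S)\bigr) = w_G(S) - c_G(S), \]
which is exactly the definition of $c_G(S)$. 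This yields the claimed bound.

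The calculation is essentially routine additive bookkeeping, so I do not expect a serious obstacle; the one delicate point is the degenerate case $S = V(G)$, where $H$ is empty and the minimality hypothesis no longer supplies a \emph{strict} inequality, leaving only $20\gamma(G) \ge w_G(G) = w_G(S) - c_G(S)$. I expect this case to be irrelevant here, since the lemma is invoked only for proper subsets $S$ coming from small reducible configurations, where $G-S$ is always nonempty; thus the only genuine care needed is to confirm that $H$ is a nonempty proper subgraph before applying minimality.
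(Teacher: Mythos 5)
Your proof is correct and follows essentially the same route as the paper's: apply minimality to $G-S$, use the subadditivity $\gamma(G) \le \gamma(G[S]) + \gamma(G-S)$, and rewrite $w_G(G) - w_{G-S}(G-S) = w_G(S) - c_G(S)$ via the additivity of the weight function. Your remark about the degenerate case $S = V(G)$ (where only $20\gamma(G) \ge w_G(G)$ is available) is a legitimate fine point that the paper does not address explicitly, but as you say the lemma is only ever invoked for small sets $S$ with $G-S$ nonempty.
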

	
	\begin{proof}
		By the minimality of $G$, we have $20\gamma(G-S) < w_{G-S}(G-S)$.
		Note also the following three facts:
		\begin{enumerate}
			\item $20\gamma(G) \geq w_G(G)$,
			\item $\gamma(G) \le \gamma(G-S) + \gamma(G[S])$,
			\item $c_G(S) = w_{G-S}(G-S) - w_G(G-S) = w_{G-S}(G-S) - (w_G(G) - w_G(S))$.
		\end{enumerate}
		
		These give us
		\begin{align*}
			w_{G-S}(G-S)-c_G(S)+w_G(S)=w_G(G) & \leq 20 \gamma(G) \\
			& \le 20\gamma(G-S) + 20 \gamma(G[S]) \\
			& < w_{G-S}(G-S) + 20\gamma(G[S]), 
		\end{align*}
		from which the desired inequality follows.
	\end{proof}
	
	The \nameref{lem:key} is our main tool when proving reducible configurations. The idea is to find a suitable set $S$, determine the $\gamma(G[S])$ (trivial since $S$ will be small), calculate the weight $w_G(S)$ of $S$ (by definition, we simply add up the respective weights that are in one-to-one correspondence with the degree of the vertices in $S$), and find a bound on the cost $c_G(S)$ of $S$. The cost represents how much we can afford to change the weight of the graph by removing $S$. Each vertex $v$ in $N_G(S) \setminus S$ will contribute to the cost of $S$, and this contribution is determined by observing the degree change of the vertex $v$. By definition, it equals $w_{G-S}(v)-w_G(v)$. Thus, it can be calculated from $\deg_G(v)$ and $|N_G(v) \cap S|$; see Table~\ref{tab:costs}. Observe that the change in weight is the largest when a 0-vertex is created. Thus, we also need to keep track of the change in degree for vertices in $G-S$. If for each $v \in N_G(S) \setminus S$, $|N_G(v) \cap S| = 1$ holds, then the cost of $S$ depends on the number of $1$-vertices and $2^+$-vertices in $N_G(S) \setminus S$. Note that the condition that girth is at least 8 will often imply that for each $v \in N_G(S) \setminus S$, $|N_G(v) \cap S| = 1$ for small sets $S$.
	
	\begin{table}[!ht]
		\centering
		\begin{tabular}{|c||*{4}{c|}}\hline
			\backslashbox{$\deg_G(v)$}{$|N_G(v) \cap S|$}
			&0&1&2&3\\\hline\hline
			0 & 0 & / & / & / \\\hline
			1 & 0 & 5 & / & / \\\hline
			2 & 0 & 4 & 9 & / \\\hline
			3 & 0 & 4 & 8 & 13 \\\hline
		\end{tabular}
		\caption{The amount a vertex $v$ contributes to $c_G(S)$.}
		\label{tab:costs}
	\end{table}
	
	Sometimes, we will bound the cost of $S$ by considering the edges leaving $S$ instead of considering the vertices in $N_G(S) \setminus S$. The possibility of several edges being incident to the same vertex in $N_G(S) \setminus S$ must be taken into account. This is formalized in the following lemma.
	
	\begin{lemma}
		\label{lem:cost-bound}
		Let $\emptyset \neq S \subseteq V(G)$ be such that there are $k$ edges between $S$ and $G-S$.
		If all vertices in $N_G(S) \setminus S$ are of degree 2 or 3, then $c_G(S) \leq 4k + \lfloor \frac{k}{2} \rfloor$.
		More precisely, if there there are at most $\ell$ edges between $S$ and $G-S$ that are incident with a $2$-vertex in $N_G(S) \setminus S$, then $c_G(S) \leq 4k + \max \left\{ \lfloor \frac{\ell}{2} \rfloor + \lfloor \frac{k-\ell}{3} \rfloor, \lfloor \frac{\ell-1}{2} \rfloor + \lfloor \frac{k-\ell+1}{3} \rfloor \right\}$.
	\end{lemma}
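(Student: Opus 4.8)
The plan is to decompose the cost edge-by-edge. For each $v \in N_G(S)\setminus S$ write $m_v = |N_G(v)\cap S|$ for the number of edges joining $v$ to $S$. Reading off Table~\ref{tab:costs} under the hypothesis that every such $v$ has degree $2$ or $3$, one observes that the contribution of $v$ to $c_G(S)$ is exactly $4m_v + e_v$, where the \emph{excess} $e_v \in \{0,1\}$ equals $1$ precisely when $v$ is \emph{saturated}, meaning $v$ is a $2$-vertex with $m_v = 2$ (contribution $9 = 4\cdot 2 + 1$) or a $3$-vertex with $m_v = 3$ (contribution $13 = 4\cdot 3 + 1$); in every other admissible case (a $2$-vertex with $m_v=1$, or a $3$-vertex with $m_v \in \{1,2\}$) the contribution is exactly $4m_v$. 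Summing over all $v$ and using $\sum_v m_v = k$ then gives $c_G(S) = 4k + E$, where $E = \sum_v e_v$ counts the saturated vertices, so the whole problem reduces to bounding $E$.

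For the first (rough) bound, I would note that every saturated vertex uses at least $2$ of the $k$ edges leaving $S$, and distinct saturated vertices use disjoint edge sets, so $2E \le k$ and hence $E \le \lfloor k/2 \rfloor$, giving $c_G(S) \le 4k + \lfloor k/2 \rfloor$. For the sharper bound I would split the $k$ edges according to their endpoint in $N_G(S)\setminus S$: let $a$ be the number incident with a $2$-vertex and $b = k - a$ the number incident with a $3$-vertex, so that by hypothesis $a \le \ell$. Since each saturated $2$-vertex consumes $2$ of the $a$ edges and each saturated $3$-vertex consumes $3$ of the $b$ edges, one gets $E \le \lfloor a/2 \rfloor + \lfloor (k-a)/3 \rfloor =: f(a)$.

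It then remains to maximize $f$ over $0 \le a \le \ell$. The key monotonicity observation is that replacing $a$ by $a+2$ raises $\lfloor a/2\rfloor$ by exactly $1$ while lowering $\lfloor (k-a)/3\rfloor$ by $0$ or $1$ (a short residue computation modulo $3$), so $f$ is non-decreasing along each parity class of $a$. Consequently the maximum over $0\le a \le \ell$ is attained at the two largest admissible values, $a = \ell$ and $a = \ell-1$, which yields $E \le \max\{f(\ell), f(\ell-1)\}$ and hence the stated inequality (the degenerate values such as $\lfloor -1/2\rfloor = -1$ when $\ell = 0$ cause no harm, since the genuine configuration $a=0$ is still covered by the first term).

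The main obstacle is precisely this last optimization step: one must argue carefully that $f$ cannot exceed its values at $a = \ell$ and $a = \ell-1$, and this hinges on the parity-separated monotonicity together with the small case analysis of how $\lfloor (k-a)/3 \rfloor$ changes as $a$ varies. Everything preceding it is a direct transcription of Table~\ref{tab:costs} into the additive form ``$4$ per edge plus an excess of $0$ or $1$ per saturated vertex,'' which I expect to be routine.
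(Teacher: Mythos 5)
Your proof is correct and takes essentially the same approach as the paper: the paper likewise decomposes the cost as $4$ per edge leaving $S$ plus an extra $1$ for each vertex of $N_G(S)\setminus S$ that becomes isolated in $G-S$, bounds the number of such vertices by $\lfloor \frac{x}{2}\rfloor + \lfloor \frac{k-x}{3}\rfloor$, and maximizes over $0 \le x \le \ell$. The only difference is that you spell out the parity-monotonicity argument for the final maximization, which the paper simply asserts as an observation.
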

	
	\begin{proof}
		Let $x$ be the number of edges between $S$ and $G-S$ that are incident with a 2-vertex in $N_G(S) \setminus S$. Then $c_G(S) \leq 4 k + \lfloor \frac{x}{2} \rfloor + \lfloor \frac{k-x}{3} \rfloor$. 
		Note that each edge between $S$ and $G-S$ basically increases the cost of $N_G(S) \setminus S$ by $4$, and whenever there is an isolated vertex in $N_G(S) \setminus S$, there is an additional increase in cost by $1$.
		Observing that $\max \left\{ \lfloor \frac{x}{2} \rfloor + \lfloor \frac{k-x}{3} \rfloor; \; 0 \leq x \leq \ell \right\} = \max \left\{ \lfloor \frac{\ell}{2} \rfloor + \lfloor \frac{k-\ell}{3} \rfloor, \lfloor \frac{\ell-1}{2} \rfloor + \lfloor \frac{k-\ell+1}{3} \rfloor \right\} \leq \lfloor \frac{k}{2} \rfloor$ concludes the proof.
	\end{proof}

	Note that the more precise version of the lemma is not needed in the following, and we have included it for completeness only.
	
	\section{Graphs with girth at least 9}
	\label{sec:girth9}
	
	To make the proof more readable, we first present the proof of Theorem \ref{thm:main} only for graphs with girth at least 9. Reducible configurations are listed in Section \ref{sec:reducible9}, and the discharging method is presented in Section \ref{sec:discharging9}. The complete proof of the result (including graphs containing 8-faces) is presented in Section \ref{sec:girth8}.
	
	\subsection{Reducible configurations}
	\label{sec:reducible9}
	
	As above, $G$ is a minimal counterexample to the Theorem \ref{thm:main}. From now on, assume that $G$ is embedded in the plane and this embedding is fixed. From the minimality of $G$ we immediately infer that $G$ is connected.
	
	\begin{lemma}
		\label{lem:1-2}
		The graph $G$ does not contain a sequence $(1,2)$.
	\end{lemma}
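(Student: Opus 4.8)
The plan is to derive a contradiction via the Key Lemma (Lemma~\ref{lem:key}) applied to the smallest set $S$ that captures the forbidden configuration. Suppose $G$ contained a sequence $(1,2)$, realized by a path $v_1 v_2$ with $\deg_G(v_1)=1$ and $\deg_G(v_2)=2$, and write $N_G(v_2)=\{v_1,v_3\}$. I would take $S=\{v_1,v_2\}$, so that $G[S]$ is a single edge and hence $\gamma(G[S])=1$, giving $20\gamma(G[S])=20$. Reading the weights off their definition, $w_G(S)=w_G(v_1)+w_G(v_2)=15+11=26$.

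The remaining ingredient is a bound on the cost $c_G(S)$. Since $v_1$ is a leaf, its only neighbor lies in $S$, so the only vertex of $N_G(S)\setminus S$ is $v_3$, joined to $S$ by the single edge $v_2 v_3$; in particular $|N_G(v_3)\cap S|=1$. First I would dispose of the degenerate possibility $\deg_G(v_3)=1$: in that case the component of $G$ containing $v_2$ is exactly $P_3$, and since $G$ is connected we would get $G=P_3$, which satisfies $20\gamma(G)=20<41=w_G(G)$ and is therefore not a counterexample. Hence $\deg_G(v_3)\in\{2,3\}$, and consulting Table~\ref{tab:costs} (equivalently, computing the weight change as $v_3$ drops by one in degree) shows that $v_3$ contributes exactly $4$ to the cost in either case, so $c_G(S)=4$.

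Feeding these values into the Key Lemma yields $20 = 20\gamma(G[S]) > w_G(S)-c_G(S)=26-4=22$, i.e. $20>22$, a contradiction. This forces $G$ to be free of the sequence $(1,2)$.

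I expect the only genuine care point to be the cost computation: one must verify that exactly one vertex leaves $S$ with its degree reduced by one, confirm that the $\deg_G(v_3)=2$ and $\deg_G(v_3)=3$ cases give the same contribution of $4$, and rule out the $\deg_G(v_3)=1$ edge case separately. The girth hypothesis plays no role in this particular reduction, since the leaf structure already guarantees $|N_G(v_3)\cap S|=1$; it will matter only for the larger configurations treated later.
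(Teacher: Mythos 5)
Your proof is correct and follows essentially the same route as the paper: the same set $S=\{v_1,v_2\}$, the same application of the Key Lemma with $\gamma(G[S])=1$ and $w_G(S)=26$. The only difference is that you separately rule out $\deg_G(v_3)=1$ via connectivity to get $c_G(S)=4$, whereas the paper simply absorbs that case into the bound $c_G(S)\le 5$, which already yields the contradiction $20>21$; your extra step is harmless but unnecessary.
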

	
	\begin{proof}
		Let $v, u$ be a sequence $(1,2)$ in $G$; see Figure \ref{fig:1-2}.
		
		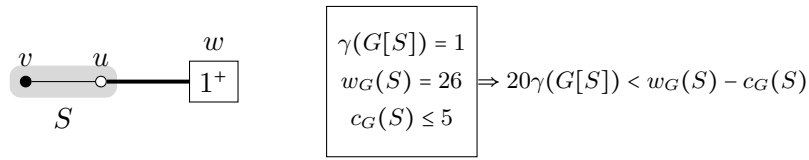
\begin{figure}[h!!]
			\centering
			\begin{tikzpicture}
				[
				vert/.style={circle,fill=black,draw=black, inner sep=0.05cm},
				s/.style={fill=black!15!white, draw=black!15!white, rounded corners},
				outvert/.style={rectangle,draw=black},
				outedge/.style={line width=1.5pt},
				dom_vert/.style={circle,draw=black,fill=white, inner sep=0.05cm}
				]
				\node[dom_vert, label={90:$u$}] (U) at (0,0) {};
				\node[vert, label={90:$v$}] (V) at (-1,0) {};
				\node[outvert, label={90:$w$}] (out) at (1.5,0) {$1^+$};
				\draw (V) -- (U);
				\draw[outedge] (U) -- (out);
				\begin{pgfonlayer}{background}
					\node[s, label={-90:$S$}, fit=(U) (V)] {};
				\end{pgfonlayer}
				\node at (4,0.5) {\footnotesize{$\gamma(G[S]) =1$}};
				\node at (4,0) {\footnotesize{$w_G(S)=26$}};
				\node at (4,-0.5) {\footnotesize{$c_G(S) \le 5$}};
				\node at (7.2,0) {\footnotesize{$\Rightarrow 20 \gamma(G[S]) < w_G(S)-c_G(S)$}};
				\draw (3,1) -- (5,1) -- (5,-1) -- (3,-1) --cycle;
			\end{tikzpicture}
			\caption{The reducible configuration $(1,2)$ from Lemma \ref{lem:1-2}.}
			\label{fig:1-2}
		\end{figure}
		
		Let $S = \{v, u\} \subseteq N_G[u]$. Clearly, $\gamma(G[S]) = 1$ and $w_G(S) = 11 + 15 = 26$. 
		Denote $N_G(u) = \{v, w\}$. Then $c_G(S) = w_{G-S}(G-S) - w_G(G-S) = \sum_{x \in V(G-S)} w_{G-S}(x) - \sum_{x \in V(G-S)} w_G(x) = w_{G-S} (w) - w_G(w) \in \{4, 5\}$, so $c_G(S) \leq 5$. (Note that since $|N_G(S) \setminus S|=1$, $c_G(S) = 5$ if and only if $\deg_G (w) = 1$; see Table \ref{tab:costs}.)
		
		Thus, by the \nameref{lem:key}, we obtain $20 = 20 \gamma(G[S]) > w_G(S) - c_G(S) \geq 26 - 5 = 21$, which is a contradiction.
	\end{proof}
	
	Before we continue to establish further reducible configurations, we consider Figure \ref{fig:1-2} again, since the same style will be used for the rest of the paper. Vertices forming set $S$ are marked with a grey shade, and the vertices dominating $G[S]$ are colored white (the remaining vertices of $S$ are given color black). The edges between $S$ and $G-S$ are drawn thicker. The vertices in $N_G(S) \setminus S$ are drawn as square boxes. The value $k^+$ (resp.\ $k$) inside the box means that this vertex has degree at least $k$ (resp.\ exactly $k$). Sometimes, even a degree of a vertex in $S$ is not unique -- in this case the edge to a possibly non-existing neighbor will be marked with a dashed line (see for example Figure \ref{fig:2-3-2}).
	
	\begin{lemma}
		\label{lem:1-3-1}
		The graph $G$ does not contain a sequence $(1,3,1)$.
	\end{lemma}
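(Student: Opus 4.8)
The plan is to mirror the argument used for the $(1,2)$-configuration in Lemma~\ref{lem:1-2}, taking $S$ to be the three vertices of the sequence and deriving a contradiction from the \nameref{lem:key}. Concretely, let $v_1, v_2, v_3$ be a sequence $(1,3,1)$, so that $v_1 v_2, v_2 v_3 \in E(G)$ with $\deg_G(v_1) = \deg_G(v_3) = 1$ and $\deg_G(v_2) = 3$. Set $S = \{v_1, v_2, v_3\}$. Since $v_1$ and $v_3$ are leaves, their only neighbor is $v_2$, so $G[S]$ is a path on three vertices and the single vertex $v_2$ dominates it; hence $\gamma(G[S]) = 1$ and $20\gamma(G[S]) = 20$. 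The weight is immediate from the definition of $w_G$: $w_G(S) = 15 + 7 + 15 = 37$.

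The next step is to bound the cost $c_G(S)$. Because $v_1$ and $v_3$ are leaves adjacent only to $v_2$, the only vertex of $G$ outside $S$ that can have a neighbor in $S$ is the third neighbor $w$ of $v_2$ (the neighbor distinct from $v_1$ and $v_3$). Thus $N_G(S) \setminus S = \{w\}$, and since $w$ cannot be adjacent to the leaves $v_1, v_3$, we have $|N_G(w) \cap S| = 1$. Reading off Table~\ref{tab:costs}, the contribution of $w$ to the cost is at most $5$ (attained exactly when $\deg_G(w) = 1$), so $c_G(S) \le 5$.

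Combining these bounds, the \nameref{lem:key} would force $20 = 20\gamma(G[S]) > w_G(S) - c_G(S) \ge 37 - 5 = 32$, which is false. This contradiction shows that $G$ contains no sequence $(1,3,1)$. I expect essentially no obstacle here: the configuration is purely local, no cycle is involved (so the girth hypothesis is not even needed), and the only point requiring a moment of care is verifying that $w$ has exactly one neighbor in $S$, which is immediate because $v_1$ and $v_3$ are leaves whose unique neighbor is $v_2$.
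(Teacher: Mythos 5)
Your proposal is correct and follows exactly the paper's argument: the same set $S$ consisting of the three vertices of the sequence, the same values $\gamma(G[S])=1$, $w_G(S)=37$, $c_G(S)\le 5$, and the same contradiction $20 > 32$ via the \nameref{lem:key}. No issues.
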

	
	\begin{proof}
		Let $v_1, u, v_2$ be a sequence $(1,3,1)$ in $G$; see Figure \ref{fig:1-3-1}.
		
		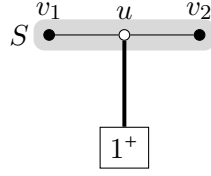
\begin{figure}[h!!]
			\centering
			\begin{tikzpicture}
				[
				vert/.style={circle,fill=black,draw=black, inner sep=0.05cm},
				s/.style={fill=black!15!white, draw=black!15!white, rounded corners},
				outvert/.style={rectangle,draw=black},
				outedge/.style={line width=1.5pt},
				dom_vert/.style={circle,draw=black,fill=white, inner sep=0.05cm}
				] 
				\filldraw[s] (-1.2,-0.2) rectangle (1.2, 0.2) {};    
				\node[dom_vert, label={90:$u$}] (U) at (0,0) {};
				\node[vert, label={90:$v_1$}] (V1) at (-1,0) {};
				\node[vert, label={90:$v_2$}] (V2) at (1,0) {};
				\node[outvert] (out) at (0,-1.5) {$1^+$};
				\draw (V1) -- (U) -- (V2);
				\draw[outedge] (U) -- (out);
				\node at (-1.4,0){$S$};
			\end{tikzpicture}
			\caption{The reducible configuration $(1,3,1)$ from Lemma \ref{lem:1-3-1}.}
			\label{fig:1-3-1}
		\end{figure}    
		
		Let $S= \{ u, v_1, v_2 \} \subseteq N_G[u]$. Clearly, $\gamma(G[S]) = 1$, $w_G(S) = 2 \cdot 15 + 7 = 37$, and $c_G(S) \leq 5$. By the \nameref{lem:key}, we get $20 > 37 - 5 = 32$, which is a contradiction.
	\end{proof}
	
	\begin{lemma}
		\label{lem:1-3-2}
		The graph $G$ does not contain a sequence $(1,3,2)$.
	\end{lemma}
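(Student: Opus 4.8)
The plan is to apply the \nameref{lem:key} to a small set around the degree-$3$ vertex, exactly as in Lemmas~\ref{lem:1-2} and~\ref{lem:1-3-1}. Write the sequence $(1,3,2)$ as a path $v_1, u, v_2$ with $\deg_G(v_1) = 1$, $\deg_G(u) = 3$, and $\deg_G(v_2) = 2$. Let $x$ be the third neighbor of $u$ (besides $v_1$ and $v_2$), and let $y$ be the neighbor of $v_2$ other than $u$. I would take $S = \{v_1, u, v_2\} \subseteq N_G[u]$.

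First, the easy quantities: since $u$ is adjacent to both $v_1$ and $v_2$, the single vertex $u$ dominates $G[S]$, so $\gamma(G[S]) = 1$; and by definition of the weights $w_G(S) = w_G(v_1) + w_G(u) + w_G(v_2) = 15 + 7 + 11 = 33$.

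The one step requiring care is the bound on $c_G(S)$, and this is where the girth hypothesis enters. I would first check that $N_G(S) \setminus S = \{x, y\}$ with $x \neq y$: the leaf $v_1$ has no neighbor outside $S$, the vertex $u$ contributes only $x$, and $v_2$ contributes only $y$, while $x = y$ would force the triangle $u v_2 x$, contradicting $g(G) \ge 8$. The same girth bound shows each of $x$ and $y$ has exactly one neighbor in $S$ (e.g.\ $x$ adjacent to $v_2$ would give the triangle $u x v_2$, and $y$ adjacent to $u$ the triangle $u v_2 y$; and neither $x$ nor $y$ can be adjacent to the leaf $v_1$). Consulting Table~\ref{tab:costs}, a vertex with a single neighbor in $S$ contributes at most $5$ to the cost, so $c_G(S) \le 10$. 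One can sharpen this using the earlier reductions---Lemma~\ref{lem:1-2} rules out $\deg_G(y) = 1$ and Lemma~\ref{lem:1-3-1} rules out $\deg_G(x) = 1$, whence both contribute $4$ and $c_G(S) \le 8$---but the cruder bound already suffices.

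Finally, feeding these into the \nameref{lem:key} yields $20 = 20\gamma(G[S]) > w_G(S) - c_G(S) \ge 33 - 10 = 23$, i.e.\ $20 > 23$, a contradiction. The only genuinely non-mechanical part is the short casework ruling out coincidences among $x$, $y$, and the vertices of $S$, which is precisely where girth at least $8$ is invoked; beyond that the argument is a direct instance of the framework, so I anticipate no real obstacle.
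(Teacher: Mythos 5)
Your proposal is correct and matches the paper's proof essentially verbatim: the same set $S = \{v_1, u, v_2\}$, the same values $\gamma(G[S]) = 1$ and $w_G(S) = 33$, the same bound $c_G(S) \le 2 \cdot 5 = 10$, and the same contradiction $20 > 23$ via the Key Lemma. The extra care you take in verifying that the two outside neighbors are distinct and each have a single neighbor in $S$ is implicit in the paper but entirely consistent with it.
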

	
	\begin{proof}
		Let $v, u, w$ be a sequence $(1, 3, 2)$ in $G$; see Figure~\ref{fig:1-3-2}.
		
		\begin{figure}[h!!]
			\centering
			\begin{tikzpicture}
				[
				vert/.style={circle, fill=black, draw=black, inner sep=0.05cm}, 
				s/.style={fill=black!15!white, draw=black!15!white, rounded corners},
				outvert/.style={rectangle, draw=black},
				outedge/.style={line width=1.5pt},
				dom_vert/.style={circle,draw=black,fill=white, inner sep=0.05cm}
				]
				\filldraw[s] (-0.2,-0.2) rectangle (2.2,0.2) {};
				
				\node[vert, label={90:$v$}] (V) at (0,0) {};
				\node[dom_vert, label={90:$u$}] (U) at (1,0) {};
				\node[vert, label={90:$w$}] (W) at (2,0) {};
				\node[outvert] (W1) at (3.5,0) {$1^+$};
				\node[outvert] (U1) at (1,-1.5) {$1^+$};
				\draw (V) -- (U) -- (W);
				\draw[outedge] (W) -- (W1);
				\draw[outedge] (U) -- (U1);
				
				\node at (-0.4,0) {$S$};
			\end{tikzpicture}
			
			\caption{The reducible configuration $(1,3,2)$ from Lemma \ref{lem:1-3-2}.}
			\label{fig:1-3-2}
		\end{figure}
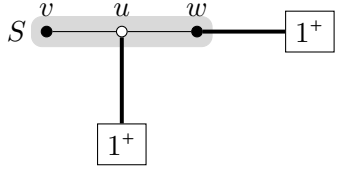

		Take $S = \{ v, u, w \} \subseteq N_G[u]$. We have $\gamma(G[S]) = 1$, $w_G(S) = 15 + 7 + 11 = 33$, and $c_G(S) \leq 2 \cdot 5 = 10$. By the \nameref{lem:key}, we obtain $20 > 33 - 10 = 23$, which is a contradiction.
	\end{proof}
	
	\begin{lemma}
		\label{lem:1-3-3-1}
		The graph $G$ does not contain a sequence $(1,3,3,1)$.
	\end{lemma}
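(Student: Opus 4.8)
The plan is to follow exactly the template established by Lemmas~\ref{lem:1-2} through~\ref{lem:1-3-2}: apply the Key Lemma to a carefully chosen set $S$ and derive a numerical contradiction. Let $v_1, u_1, u_2, v_2$ be a sequence $(1,3,3,1)$, so that $v_1, v_2$ are leaves, $u_1$ is a $3$-vertex adjacent to $v_1$ and $u_2$, and $u_2$ is a $3$-vertex adjacent to $u_1$ and $v_2$. Each of $u_1$ and $u_2$ has exactly one further neighbor outside this path; call these $w_1$ and $w_2$ respectively. First I would take $S = \{v_1, u_1, u_2, v_2\}$ and compute the three quantities the Key Lemma needs.

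The computation of $\gamma(G[S])$ is the routine part: $G[S]$ is the path $v_1 u_1 u_2 v_2$ on four vertices, whose domination number is $2$ (one vertex cannot dominate a $P_4$, while $\{u_1, u_2\}$ does). So $20\gamma(G[S]) = 40$. For the weight, $w_G(S) = 15 + 7 + 7 + 15 = 44$ since we have two leaves and two $3$-vertices. For the cost, the only vertices in $N_G(S)\setminus S$ are $w_1$ and $w_2$ (the leaves contribute no outside neighbors), so there are exactly two edges leaving $S$. The key geometric observation I would invoke is that the girth condition forces $w_1 \neq w_2$: if $w_1 = w_2$, then $u_1, u_2$ together with that common neighbor would form a triangle (a $3$-cycle), contradicting girth at least $8$. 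Hence each of $w_1, w_2$ satisfies $|N_G(w_i)\cap S| = 1$, and by Table~\ref{tab:costs} each contributes at most $5$ to the cost, giving $c_G(S) \le 10$.

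Plugging these into the Key Lemma yields $40 = 20\gamma(G[S]) > w_G(S) - c_G(S) \ge 44 - 10 = 34$, which gives $40 > 34$; this is true and therefore produces no contradiction. So the naive choice of $S$ is insufficient, and this is where the real work lies. The main obstacle is that the two leaves are ``cheap'' to dominate but force $\gamma(G[S]) = 2$, and the bound $c_G(S)\le 10$ is too generous. I would sharpen the cost bound: the worst case $c_G(S)=10$ requires both $w_1$ and $w_2$ to be leaves, but then $v_1 u_1 w_1$ and $v_2 u_2 w_2$ exhibit sequences $(1,3,1)$, already excluded by Lemma~\ref{lem:1-3-1}. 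Thus at least one of $w_1, w_2$ is a $2^+$-vertex, contributing only $4$, so in fact $c_G(S) \le 9$, still giving $40 > 35$, no contradiction.

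The resolution, then, must come from absorbing one of the outside vertices into $S$ or from a smarter partition, rather than from trimming the cost of the four-vertex set. I would reconsider the choice of $S$: for instance, enlarge $S$ to include a neighbor $w_1$ (or invoke previously established reducible sequences to constrain the degrees of $w_1, w_2$ more tightly, as was done implicitly above using Lemma~\ref{lem:1-3-1}). A promising alternative is to split off one leaf-$3$-vertex pair and treat the configuration as overlapping instances of the already-reducible sequence $(1,3,2)$ or $(1,3,1)$; concretely, since $v_1 u_1 u_2$ is a sequence $(1,3,3)$, the degree of the missing neighbor $w_1$ of $u_1$ being $1$ would create a $(1,3,1)$, and being $2$ would, combined with $v_1$, interact with Lemma~\ref{lem:1-3-2}. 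The hard part will be identifying the right set $S$ (most likely a five-vertex set incorporating one of $w_1, w_2$, or a case analysis on $\deg_G(w_1)$ and $\deg_G(w_2)$) so that the product $20\gamma(G[S])$ strictly exceeds $w_G(S)-c_G(S)$; I expect the argument to branch on whether $w_1$ and $w_2$ are $2$-vertices or $3$-vertices, using the girth bound repeatedly to guarantee that newly adjacent vertices are distinct and that no short cycle is created.
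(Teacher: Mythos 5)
Your diagnosis of the failure of the naive choice $S = \{v_1,u_1,u_2,v_2\}$ is correct and matches the reason the paper does not use it, but the proposal stops short of an actual proof: the final paragraph only speculates about what the right $S$ might be, and its most concrete guess (a five-vertex set containing one of $w_1,w_2$) does not work. With $S = \{v_1,u_1,u_2,v_2,w_1\}$ and $\deg_G(w_1)=3$ you get $\gamma(G[S])=2$, $w_G(S)=51$, and at best $c_G(S)\le 13$ (the vertex $w_2$ contributes $4$, and of the two outside neighbors of $w_1$ at most one is a leaf by Lemma~\ref{lem:1-3-1}), so the Key Lemma requires $40 > 38$, which is true and yields no contradiction. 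You also leave the degrees of $w_1,w_2$ unresolved ("I expect the argument to branch on whether $w_1$ and $w_2$ are $2$-vertices or $3$-vertices"), when in fact no branching is needed: $v_1 u_1 w_1$ would be a sequence $(1,3,1)$ or $(1,3,2)$ if $\deg_G(w_1)\in\{1,2\}$, so Lemmas~\ref{lem:1-3-1} and~\ref{lem:1-3-2} force $\deg_G(w_1)=\deg_G(w_2)=3$ outright.

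The paper's proof takes the six-vertex set $S = N_G[u_1,u_2] = \{v_1,u_1,u_2,v_2,w_1,w_2\}$ (in your notation). Then $\gamma(G[S])=2$ and $w_G(S) = 2\cdot 15 + 4\cdot 7 = 58$. The girth condition guarantees that the four outside neighbors (two of $w_1$, two of $w_2$) are distinct and each has exactly one neighbor in $S$, and Lemma~\ref{lem:1-3-1} applied at $w_1$ and at $w_2$ shows that at most one outside neighbor of each is a leaf, so $c_G(S) \le 2\cdot 5 + 2\cdot 4 = 18$. The Key Lemma then gives $40 > 58 - 18 = 40$, the desired contradiction. Note how tight this is: every ingredient (both degree forcings, the leaf count among the outside neighbors) is needed, which is why the smaller sets you considered cannot succeed.
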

	
	\begin{proof}
		Let $u_1, u, v, v_1$ be a sequence $(1,3,3,1)$ in $G$. Since $u, v$ are of degree $3$, they each have another neighbor, $u_2, v_2$, respectively. By Lemmas \ref{lem:1-3-1} and \ref{lem:1-3-2}, we know that $\deg_G(u_2) = \deg_G(v_2) = 3$; see Figure~\ref{fig:1-3-3-1}.
		
		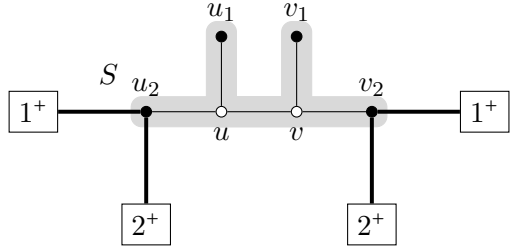
\begin{figure}[h!!]
			\centering
			\begin{tikzpicture}
				[
				vert/.style={circle,fill=black,draw=black, inner sep=0.05cm},
				s/.style={fill=black!15!white, draw=black!15!white, rounded corners},
				outvert/.style={rectangle,draw=black},
				outedge/.style={line width=1.5pt},
				dom_vert/.style={circle,draw=black,fill=white, inner sep=0.05cm}
				] 
				\filldraw[s] (-1.2,-0.2) rectangle (1.2,0.2) {};
				\filldraw[s, rotate=90] (-0.2,-0.2) rectangle (1.2,0.2){};
				\filldraw[s] (-0.2,-0.2) rectangle (2.2,0.2) {};
				\filldraw[s, shift={(1,0)}, rotate=90] (-0.2,-0.2) rectangle (1.2,0.2){};

				\node[vert, label={90:$u_2$}] (U2) at (-1,0) {};
				\node[dom_vert, label={-90:$u$}] (U) at (0,0) {};
				\node[dom_vert, label={-90:$v$}] (V) at (1,0) {};
				\node[vert, label={90:$v_2$}] (V2) at (2,0) {};
				\node[vert, label={90:$u_1$}] (U1) at (0,1) {};
				\node[vert, label={90:$v_1$}] (V1) at (1,1) {};
				\node[outvert] (U21) at (-2.5,0) {$1^+$};
				\node[outvert] (U22) at (-1,-1.5) {$2^+$};
				\node[outvert] (V21) at (3.5,0) {$1^+$};
				\node[outvert] (V22) at (2,-1.5) {$2^+$};
				\node at (-1.5,0.5) {$S$};
				\draw (U2) -- (U) -- (V)-- (V2);
				\draw (U) -- (U1);
				\draw (V) -- (V1);
				\draw[outedge] (U2)-- (U21);
				\draw[outedge] (U2) -- (U22);
				\draw[outedge] (V2) -- (V21);
				\draw[outedge] (V2) -- (V22);
			\end{tikzpicture}
			\caption{The reducible configuration $(1,3,3,1)$ from Lemma \ref{lem:1-3-3-1}.}
			\label{fig:1-3-3-1}
		\end{figure}
		
		Take $S = \{ u, u_1, u_2, v, v_1, v_2 \} = N_G[u,v]$. We have $\gamma(G[S]) = 2$ and $w_G(S) = 2 \cdot 15 + 4 \cdot 7 = 58$. 
		Since $G$ has girth at least $8$, $N_G(S) \setminus S$ has exactly $4$ vertices.
		By Lemma \ref{lem:1-3-1}, at most two of the vertices in $N_G(S) \setminus S$ are of degree $1$.
		Thus, $c_G(S) \leq 2 \cdot 5 + 2 \cdot 4 = 18$. By the \nameref{lem:key}, we obtain $40 = 2 \cdot 20 > 58 - 18 = 40$, which is a contradiction.
	\end{proof}
	
	\begin{lemma}
		\label{lem:2-2-2}
		The graph $G$ does not contain a sequence $(2,2,2)$.
	\end{lemma}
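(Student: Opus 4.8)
The plan is to apply the \nameref{lem:key} to the three vertices of the sequence, exactly following the pattern of the previous lemmas. First I would fix notation: let $v_1, v_2, v_3$ be a $(2,2,2)$ sequence, so $v_1 v_2, v_2 v_3 \in E(G)$ and $\deg_G(v_1)=\deg_G(v_2)=\deg_G(v_3)=2$. I take $S = \{v_1, v_2, v_3\}$. Since $G[S]$ is the path $P_3$, the middle vertex $v_2$ dominates all of $S$, so $\gamma(G[S]) = 1$, and $w_G(S) = 3 \cdot 11 = 33$.

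Next I would bound the cost $c_G(S)$. Because $v_2$ has degree $2$ with both its neighbors $v_1, v_3$ inside $S$, the only edges leaving $S$ are $v_1 x_1$ and $v_3 x_3$, where $x_1$ (resp.\ $x_3$) is the second neighbor of $v_1$ (resp.\ $v_3$); thus there are $k=2$ edges between $S$ and $G-S$. I must check that $x_1, x_3$ are genuine outside vertices and are distinct: if $x_1 = v_3$ then $v_1 v_2 v_3$ is a triangle, and if $x_1 = x_3$ then $v_1 v_2 v_3 x_1$ is a $4$-cycle, both contradicting the girth hypothesis (girth at least $8$). Hence $N_G(S) \setminus S = \{x_1, x_3\}$ consists of two distinct vertices, each incident to exactly one edge to $S$. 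The crucial point is that neither $x_1$ nor $x_3$ can be a $1$-vertex, since a $1$-vertex adjacent to $v_1$ or $v_3$ would form a sequence $(1,2)$, which is excluded by Lemma~\ref{lem:1-2}. Therefore both outside neighbors are $2^+$-vertices, and by Table~\ref{tab:costs} each contributes $4$, giving $c_G(S) \le 4 + 4 = 8$ (alternatively, Lemma~\ref{lem:cost-bound} with $k=2$ gives the slightly weaker $c_G(S) \le 9$, which still suffices).

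Finally I would invoke the \nameref{lem:key} to reach the contradiction $20 = 20\gamma(G[S]) > w_G(S) - c_G(S) \ge 33 - 8 = 25$. I do not anticipate any real obstacle here: the argument is a direct discharging-style computation, and the only steps requiring care are ruling out the degenerate identifications $x_1 = v_3$ and $x_1 = x_3$ (handled by the girth bound) and applying Lemma~\ref{lem:1-2} to forbid degree-$1$ outside neighbors. As noted, even the weaker cost bound $c_G(S)\le 9$ closes the argument, so the proof is robust to how tightly the cost is estimated.
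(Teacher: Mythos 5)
Your proposal is correct and follows essentially the same route as the paper: the same set $S = N_G[v_2]$, the same values $\gamma(G[S]) = 1$ and $w_G(S) = 33$, and the same appeal to the girth condition to see that the two outside neighbors are distinct, followed by the \nameref{lem:key}. The only difference is that you additionally invoke Lemma~\ref{lem:1-2} to sharpen the cost to $c_G(S) \le 8$, whereas the paper simply uses the generic bound $c_G(S) \le 2 \cdot 5 = 10$, which already yields the contradiction $20 > 23$.
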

	
	\begin{proof}
		Let $u_1, u_2, u_3$ be a sequence $(2,2,2)$ in $G$; see Figure \ref{fig:2-2-2}.
		
		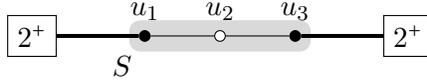
\begin{figure}[h!!]
			\centering
			\begin{tikzpicture}
				[
				vert/.style={circle,fill=black,draw=black, inner sep=0.05cm},
				s/.style={fill=black!15!white, draw=black!15!white, rounded corners},
				outvert/.style={rectangle,draw=black},
				outedge/.style={line width=1.5pt},
				dom_vert/.style={circle,draw=black,fill=white, inner sep=0.05cm}
				] 
				\filldraw[s] (-1.2,-0.2) rectangle (1.2,0.2) {};
				
				\node[vert, label={90:$u_1$}] (U1) at (-1,0) {};
				\node[dom_vert, label={90:$u_2$}] (U2) at (0,0) {};
				\node[vert, label={90:$u_3$}] (U3) at (1,0) {};
				\node[outvert] (U11) at (-2.5,0) {$2^+$};
				\node[outvert] (U31) at (2.5,0) {$2^+$};
				
				\node at (-1.3,-0.4) {$S$};
				\draw (U1) -- (U2) -- (U3);
				\draw[outedge] (U1)-- (U11);
				\draw[outedge] (U3) -- (U31);
			\end{tikzpicture}
			\caption{The reducible configuration $(2,2,2)$ from Lemma \ref{lem:2-2-2}.}
			\label{fig:2-2-2}
		\end{figure}
		
		Take $S = \{ u_1, u_2, u_3 \} = N_G[u_2]$. We have $\gamma(G[S]) = 1$ and $w_G(S) = 3 \cdot 11 = 33$. 
		Since $G$ has girth at least $8$, no vertex in $N_G(S) \setminus S$ has at least two neighbors in $S$, and we get $c_G(S) \leq 2 \cdot 5 = 10$.
		By the \nameref{lem:key}, we obtain $20 > 33-10 = 23$, which is a contradiction.
	\end{proof}
	
	\begin{lemma}
		\label{lem:no-deg-1}
		The graph $G$ has no 1-vertex.
	\end{lemma}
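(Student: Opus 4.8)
The plan is to argue by contradiction: suppose $G$ has a $1$-vertex $v$, and let $u$ be its unique neighbor. First I would pin down the entire local structure around $u$ using the reducible configurations already established. Since $G$ is a connected counterexample and $K_2$ (the only connected graph in which a $1$-vertex has a $1$-neighbor) satisfies the theorem and so is not $G$, the vertex $u$ is not a $1$-vertex; and by Lemma~\ref{lem:1-2} it is not a $2$-vertex either, so $\deg_G(u)=3$. Write $N_G(u)=\{v,a,b\}$. By Lemma~\ref{lem:1-3-1} neither $a$ nor $b$ is a $1$-vertex, and by Lemma~\ref{lem:1-3-2} neither is a $2$-vertex, so both $a$ and $b$ are $3$-vertices. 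Finally, if some neighbor of $a$ other than $u$ were a $1$-vertex, then together with $v,u,a$ it would form a sequence $(1,3,3,1)$, contradicting Lemma~\ref{lem:1-3-3-1}; hence every neighbor of $a$, and symmetrically of $b$, other than $u$ has degree at least $2$.

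With this structure in hand, I would apply the \nameref{lem:key} to $S=N_G[u]=\{v,u,a,b\}$. Here $\gamma(G[S])=1$, since $u$ dominates $S$, and $w_G(S)=15+3\cdot 7=36$. The girth hypothesis does the rest: $a$ and $b$ are nonadjacent and share no neighbor (either coincidence would create a cycle of length at most $4$), and none of their outside neighbors can be joined to a second vertex of $S$, so $N_G(S)\setminus S$ consists of exactly four vertices, each joined to $S$ by a single edge. Since each of these four vertices has degree at least $2$, Table~\ref{tab:costs} (equivalently Lemma~\ref{lem:cost-bound} with $k=4$) gives a contribution of exactly $4$ apiece, whence $c_G(S)=16$. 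The \nameref{lem:key} then yields $20=20\gamma(G[S])>w_G(S)-c_G(S)=36-16=20$, a contradiction.

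The delicate point, and the one I would take most care over, is that the final inequality is tight: the contradiction comes from $20>20$ with no slack to spare. Consequently the argument breaks unless \emph{both} of the following hold: that all four outside neighbors have degree at least $2$ (so each contributes only $4$, not $5$, to the cost), and that girth at least $8$ genuinely forces four \emph{distinct} outside neighbors each meeting $S$ in a single edge. The first is exactly the content of Lemma~\ref{lem:1-3-3-1}, which is why that lemma is needed here rather than the weaker ones alone; the second is a short but essential girth check, and I would verify explicitly that no two of the four outside neighbors coincide and that none of them sends a second edge into $S$, since either event would produce a cycle of length at most $4$.
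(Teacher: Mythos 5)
Your proposal is correct and follows essentially the same route as the paper: the same set $S=N_G[u]$, the same use of Lemmas~\ref{lem:1-2}, \ref{lem:1-3-1}, \ref{lem:1-3-2} to force $\deg_G(u)=\deg_G(a)=\deg_G(b)=3$, the same appeal to Lemma~\ref{lem:1-3-3-1} to exclude $1$-vertices in $N_G(S)\setminus S$, and the same tight computation $20>36-16=20$. Your explicit remarks on ruling out $G=K_2$ and on the girth check guaranteeing four distinct single-edge outside neighbors are sound and only make explicit what the paper leaves implicit.
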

	
	\begin{proof}
		Suppose that $G$ contains a 1-vertex $v$ with a neighbor $u$. By Lemma \ref{lem:1-2}, $\deg_G(u) = 3$. 
		Let $w_1$ and $w_2$ be the other neighbors of $u$; see Figure~\ref{fig:1-333}.
		By Lemmas \ref{lem:1-3-1} and \ref{lem:1-3-2}, both $w_1$ and $w_2$ must be of degree $3$.         
		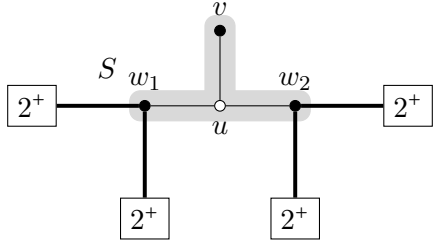
\begin{figure}[h!!]
			\centering
			\begin{tikzpicture}
				[
				vert/.style={circle,fill=black,draw=black, inner sep=0.05cm},
				s/.style={fill=black!15!white, draw=black!15!white, rounded corners},
				outvert/.style={rectangle,draw=black},
				outedge/.style={line width=1.5pt},
				dom_vert/.style={circle,draw=black,fill=white, inner sep=0.05cm}
				] 
				\filldraw[s] (-1.2,-0.2) rectangle (1.2,0.2) {};
				\filldraw[s, rotate=90] (-0.2,-0.2) rectangle (1.2,0.2){};
				
				\node[vert, label={90:$w_1$}] (w1) at (-1,0) {};
				\node[dom_vert, label={-90:$u$}] (u) at (0,0) {};
				\node[vert, label={90:$w_2$}] (w2) at (1,0) {};
				\node[vert, label={90:$v$}] (v) at (0,1) {};
				\node[outvert] (w11) at (-2.5,0) {$2^+$};
				\node[outvert] (w12) at (-1,-1.5) {$2^+$};
				\node[outvert] (w21) at (2.5,0) {$2^+$};
				\node[outvert] (w22) at (1,-1.5) {$2^+$};
				\node at (-1.5,0.5) {$S$};
				\draw (w1) -- (u) -- (w2);
				\draw (u) -- (v);
				\draw[outedge] (w1)-- (w11);
				\draw[outedge] (w1) -- (w12);
				\draw[outedge] (w2) -- (w21);
				\draw[outedge] (w2) -- (w22);
			\end{tikzpicture}
			\caption{The reducible configuration from Lemma \ref{lem:no-deg-1} that finalizes the argument that $G$ has no 1-vertex.}
			\label{fig:1-333}
		\end{figure}
		
		Take $S = \{ v, u, w_1, w_2 \} = N_G[u]$. We have $\gamma(G[S]) = 1$ and $w_G(S) = 15  + 3 \cdot 7 = 36$. Since $G$ has girth at least $8$, no vertex in $N_G(S) \setminus S$ has at least two neighbors in $S$. 
		By Lemma \ref{lem:1-3-3-1}, none of the vertices in $N_G(S) \setminus S$ are of degree $1$.
		Thus $c_G(S) \leq 4 \cdot 4 = 16$. By the \nameref{lem:key}, we obtain $20 > 36 - 16 =20$, which is a contradiction.
	\end{proof}
	
	\begin{lemma}
		\label{lem:2-3-2}
		The graph $G$ contains no sequence $(2,3,2)$.
	\end{lemma}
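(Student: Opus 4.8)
The plan is to apply the \nameref{lem:key} to the closed neighborhood of the central $3$-vertex. Writing the sequence as $v_1, v_2, v_3$ with $\deg_G(v_2) = 3$, let $u$ denote the third neighbor of $v_2$ besides $v_1$ and $v_3$, and take $S = N_G[v_2] = \{v_1, v_2, v_3, u\}$. Since $v_2$ is adjacent to every other vertex of $S$, we get $\gamma(G[S]) = 1$ immediately, so the left-hand side of the Key Lemma is $20$. The work then reduces to showing $w_G(S) - c_G(S) \geq 20$, which contradicts the strict inequality of the Key Lemma.

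The next step is to pin down $c_G(S)$ exactly rather than merely bound it. By Lemma \ref{lem:no-deg-1} every vertex of $N_G(S) \setminus S$ has degree at least $2$, so by Table \ref{tab:costs} each edge leaving $S$ toward a distinct such vertex contributes exactly $4$. The girth hypothesis removes all the ambiguity: the vertices $v_1, v_3, u$ are pairwise at distance $2$ through $v_2$, and $v_2$ is adjacent to each of them, so any vertex outside $S$ with two neighbors in $S$ would close a cycle of length at most $4$. Hence every external neighbor meets $S$ in exactly one edge, all leaving edges land on distinct vertices, and therefore $c_G(S) = 4k$, where $k$ is the number of edges leaving $S$.

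I would then split on $\deg_G(u)$. If $\deg_G(u) = 2$, then $w_G(S) = 11 + 7 + 11 + 11 = 40$ and exactly $k = 3$ edges leave $S$ (one each from $v_1$, $v_3$, $u$), so $w_G(S) - c_G(S) = 40 - 12 = 28$. If $\deg_G(u) = 3$, then $w_G(S) = 11 + 7 + 11 + 7 = 36$ and $k = 4$, the extra leaving edge being the dashed one in Figure \ref{fig:2-3-2}, so $w_G(S) - c_G(S) = 36 - 16 = 20$. In both cases $w_G(S) - c_G(S) \geq 20 = 20\gamma(G[S])$, contradicting the \nameref{lem:key}.

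The main obstacle is the second case, where the estimate is tight: $w_G(S) - c_G(S) = 20$ exactly, so the argument closes only because the Key Lemma is a \emph{strict} inequality and because girth forces $c_G(S)$ to equal precisely $4k$, with no additional $+1$ coming from two leaving edges sharing a single external $2$-vertex (as allowed in general by Lemma \ref{lem:cost-bound}). Consequently the delicate point to get right is the girth verification that each vertex of $N_G(S) \setminus S$ has exactly one neighbor in $S$; if a coincidence were possible the cost could rise and the tight case would fail.
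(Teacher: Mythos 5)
Your proposal is correct and follows essentially the same route as the paper: the same set $S = N_G[v_2]$, the same use of the girth condition to ensure each vertex of $N_G(S)\setminus S$ has a unique neighbor in $S$, and the same appeal to Lemma~\ref{lem:no-deg-1} to rule out degree-$1$ external neighbors, arriving at the tight bound $36 - 16 = 20$. The paper merely merges your two cases into the single uniform estimate $w_G(S) \geq 36$ and $c_G(S) \leq 4\cdot 4 = 16$.
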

	
	\begin{proof}
		Let $v_1, v_2, v_3$ be a sequence $(2,3,2)$ in $G$; see Figure \ref{fig:2-3-2}. 
		Let $N_G(v_2) = \{v_1, w_2, v_3\}$.    
		
		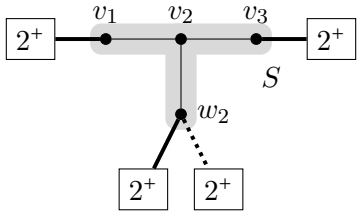
\begin{figure}[h!!]
			\centering
			\begin{tikzpicture}
				[
				vert/.style={circle,fill=black,draw=black, inner sep=0.05cm},
				s/.style={fill=black!15!white, draw = white, rounded corners},
				outvert/.style={rectangle,draw=black},
				outedge/.style={line width=1.5pt}
				] 
				\filldraw[fill=black!15!white, draw=black!15!white, rounded corners] (-1.2,-0.2) rectangle (1.2,0.2) {};
				\filldraw[fill=black!15!white, draw=black!15!white, rounded corners] (-0.2,-1.2) rectangle (0.2,0.2){};
				
				\node[vert, label={90:$v_1$}] (V1) at (-1,0) {};
				\node[vert, label={90:$v_2$}] (V2) at (0,0) {};
				\node[vert, label={90:$v_3$}] (V3) at (1,0) {};
				\node[vert, label={0:$w_2$}] (W2) at (0,-1) {};
				
				\node[outvert] (V11) at (-2,0) {$2^+$};
				\node[outvert] (V33) at (2,0) {$2^+$};
				\node[outvert] (W22) at (-0.5,-2) {$2^+$};
				\node[outvert] (W23) at (0.5,-2) {$2^+$};
				\node at (1.2,-0.5) {$S$};
				\draw (V1) -- (V2) -- (V3);
				\draw (V2) -- (W2);
				\draw[outedge] (V11)-- (V1);
				\draw[outedge] (V33)-- (V3);
				\draw[outedge, dotted] (W2)--(W23);
				\draw[outedge] (W2)--(W22);
			\end{tikzpicture}
			\caption{The reducible configuration $(2,3,2)$ from Lemma \ref{lem:2-3-2}.} 
			\label{fig:2-3-2}
		\end{figure}
		
		Take $S = \{ v_1, v_2, v_3, w_2 \} = N_G[v_2]$. We have $\gamma(G[S]) = 1$ and $w_G(S) \geq 2 \cdot 11  + 2 \cdot 7 = 36$. Since $G$ has girth at least $8$, no vertex in $N_G(S) \setminus S$ has at least two neighbors in $S$.
		Also, by Lemma~\ref{lem:no-deg-1}, all the vertices in $N_G(S) \setminus S$ have degree at least $2$, and we get $c_G(S) \leq 4 \cdot 4 = 16$. By the \nameref{lem:key}, we obtain $20 > w_G(S) - c_G(S) \geq 36 - 16 = 20$, which is a contradiction.
	\end{proof}
	
	\begin{lemma}
		\label{lem:6-path}
		Each $P_6$ in $G$ contains at most three $2$-vertices.
	\end{lemma}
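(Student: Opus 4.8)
The plan is to assume, for contradiction, that some $P_6$ in $G$ carries at least four $2$-vertices, to pin down the exact degree pattern such a path must have, and then to exhibit that pattern as a reducible configuration via the \nameref{lem:key}. Throughout I would use that every vertex of $G$ has degree $2$ or $3$: $G$ is subcubic, it is connected with more than one vertex, and it has no $1$-vertex by Lemma~\ref{lem:no-deg-1}. In particular the only degrees appearing along the path are $2$ and $3$.

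First I would extract the combinatorial restriction imposed by the earlier reducible configurations. Write the path as $v_1 v_2 \cdots v_6$. For each $i \in [4]$ the subpath $v_i v_{i+1} v_{i+2}$ is a sequence in $G$, so by Lemma~\ref{lem:2-2-2} it is not $(2,2,2)$ and by Lemma~\ref{lem:2-3-2} it is not $(2,3,2)$. Since $\deg_G(v_{i+1}) \in \{2,3\}$, these two exclusions together say that we can never have $\deg_G(v_i) = \deg_G(v_{i+2}) = 2$; that is, no two $2$-vertices may sit at distance exactly $2$ along the path. Splitting the positions by parity, the relation ``distance $2$'' turns $\{1,3,5\}$ and $\{2,4,6\}$ into two paths, each of which can contain at most two of the chosen $2$-positions, and exactly two only in the arrangements $\{1,5\}$ and $\{2,6\}$. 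Hence four is the maximum possible number of $2$-vertices, and attaining it forces the degree sequence $(2,2,3,3,2,2)$; so at least four $2$-vertices forces exactly this configuration.

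Having fixed the pattern, I would finish by reduction. Take $S = \{v_1, \ldots, v_6\}$. Since $G[S]$ contains the spanning path $P_6$, we have $\gamma(G[S]) \le \gamma(P_6) = 2$, while $w_G(S) = 4 \cdot 11 + 2 \cdot 7 = 58$. The only vertices of $S$ with a neighbor outside $S$ are $v_1, v_3, v_4, v_6$ (the interior $2$-vertices $v_2, v_5$ have both neighbors in $S$), so there are exactly $k = 4$ edges between $S$ and $G-S$; as all vertices of $N_G(S) \setminus S$ have degree $2$ or $3$, Lemma~\ref{lem:cost-bound} gives $c_G(S) \le 4k + \lfloor k/2 \rfloor = 18$. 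Substituting into the \nameref{lem:key} yields the chain $40 \ge 20\gamma(G[S]) > w_G(S) - c_G(S) \ge 58 - 18 = 40$, an immediate contradiction. Therefore no $P_6$ in $G$ has four $2$-vertices, which proves the claim.

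The step I expect to be most delicate is the combinatorial one: translating the forbidden sequences $(2,2,2)$ and $(2,3,2)$ into the single clean statement ``no two $2$-vertices at path-distance $2$,'' and then verifying that this caps the count at four with $(2,2,3,3,2,2)$ as the \emph{unique} extremal pattern. Everything afterward is a routine weight and cost computation, but it is worth noting that the bound is tight, since $58 - 18 = 40 = 20 \cdot 2$ leaves no slack; this is precisely why the argument must pin the pattern down exactly rather than merely bound the number of $2$-vertices.
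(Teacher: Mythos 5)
Your proof is correct and follows essentially the same route as the paper: rule out more than four $2$-vertices and pin down $(2,2,3,3,2,2)$ as the unique pattern with four (the paper does this via ``no three consecutive $2$-vertices'' plus a short case analysis with Lemma~\ref{lem:2-3-2}, you via the equivalent ``no two $2$-vertices at path-distance $2$'' observation), then apply the \nameref{lem:key} to $S = \{v_1,\ldots,v_6\}$ with $\gamma(G[S]) = 2$ and $w_G(S) = 58$. The only substantive difference is the cost estimate: the paper uses the girth condition and Lemma~\ref{lem:no-deg-1} to note that the four neighbours outside $S$ are distinct $2^+$-vertices, giving $c_G(S) \le 16$ and the comfortable contradiction $40 > 42$, whereas your appeal to Lemma~\ref{lem:cost-bound} yields only $c_G(S) \le 18$ and the tight but still valid $40 > 40$.
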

	
	\begin{proof}
		By Lemma \ref{lem:2-2-2}, $G$ cannot contain a $P_6$ with three consecutive $2$-vertices. Thus $G$ can contain a $P_6$ with at most four $2$-vertices.
		
		Suppose that $G$ contains a $P_6$ of vertices $v_1, \ldots, v_6$, where exactly four of them are of degree $2$, and the remaining two are of degree $3$. 
		Applying Lemma~\ref{lem:2-3-2} to a short case-analysis, we obtain that the only possibility is that $\deg_G(v_1) = \deg_G(v_2) = \deg_G(v_5) = \deg_G(v_6) = 2$ and $\deg_G(v_3) = \deg_G(v_4)= 3$; see Figure \ref{fig:2-2-3-2-3-2}. 
		
		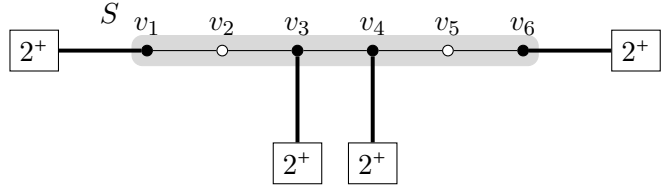
\begin{figure}[h!!]
			\centering
			\begin{tikzpicture}
				[
				vert/.style={circle,fill=black,draw=black, inner sep=0.05cm},
				s/.style={fill=black!15!white, draw=black!15!white, rounded corners},
				outvert/.style={rectangle,draw=black},
				outedge/.style={line width=1.5pt},
				dom_vert/.style={circle,draw=black,fill=white, inner sep=0.05cm}
				] 
				\filldraw[s] (-1.2,-0.2) rectangle (4.2,0.2) {};
				
				\node[vert, label={90:$v_1$}] (v1) at (-1,0) {};
				\node[dom_vert, label={90:$v_2$}] (v2) at (0,0) {};
				\node[vert, label={90:$v_3$}] (v3) at (1,0) {};
				\node[vert, label={90:$v_4$}] (v4) at (2,0) {};
				\node[dom_vert, label={90:$v_5$}] (v5) at (3,0) {};
				\node[vert, label={90:$v_6$}] (v6) at (4,0) {};
				\node[outvert] (v11) at (-2.5,0) {$2^+$};
				\node[outvert] (v31) at (1,-1.5) {$2^+$};
				\node[outvert] (v41) at (2,-1.5) {$2^+$};
				\node[outvert] (v61) at (5.5,0) {$2^+$};
				\node at (-1.5,0.5) {$S$};
				\draw (v1) -- (v2) -- (v3) -- (v4) -- (v5) -- (v6);
				\draw[outedge] (v1) -- (v11);
				\draw[outedge] (v3) -- (v31);
				\draw[outedge] (v4) -- (v41);
				\draw[outedge] (v6) -- (v61);
			\end{tikzpicture}
			\caption{An example of a reducible configuration $(2,2,3,3,2,2)$ from Lemma \ref{lem:6-path}.}
			\label{fig:2-2-3-2-3-2}
		\end{figure}
		
		Take $S = \{ v_1,\ldots, v_6 \} = N[v_2, v_5]$. 
		We have $\gamma(G[S]) = 2$ and $w_G(S) = 4 \cdot 11  + 2 \cdot 7 = 58$.
		Since $G$ has girth at least $8$, no vertex in $N_G(S) \setminus S$ has at least two neighbors in $S$.
		Also, by Lemma~\ref{lem:no-deg-1}, all the vertices in $N_G(S) \setminus S$ have degree at least $2$, and we get $c_G(S) \leq 4 \cdot 4 = 16$. By the \nameref{lem:key}, we obtain $40 > 58-16 = 42$, which is a contradiction.
	\end{proof}
	
	For the following, observe that since the girth of $G$ is at least 8, every consecutive $k$ vertices, $1 \leq k \leq 6$, along a facial walk in $G$ form a $P_k$.
	
	\begin{lemma}
		\label{lem:short-path}
		For $i \in [5]$, each face in $G$ contains a $P_i$ with at most $\lfloor\frac{i}{2}\rfloor$ $2$-vertices.
	\end{lemma}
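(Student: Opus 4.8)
The plan is to read off the cyclic sequence of degrees along the facial walk of an arbitrary face $f$ and argue purely combinatorially about this word. By Lemma~\ref{lem:no-deg-1} every vertex has degree $2$ or $3$, so this is a cyclic word over $\{2,3\}$; since $g(G)\ge 8$ the walk has length at least $8$, and by the remark preceding the statement any block of at most six consecutive positions forms a genuine $P_k$, so the forbidden-sequence lemmas apply to such windows. If $f$ carries no $2$-vertex then every $P_i$ has zero $2$-vertices and there is nothing to prove, so I would assume $f$ has at least one $2$-vertex.

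The heart of the argument is a structural claim about this word: \emph{every maximal run of $2$'s has length $1$ or $2$, and is immediately preceded and followed by at least two consecutive $3$'s.} The length bound is exactly Lemma~\ref{lem:2-2-2}. For the flanking statement, if a maximal run of $2$'s ends at position $b$, then position $b+1$ is a $3$ by maximality, and I look at the window $(b,b+1,b+2)$: were position $b+2$ a $2$, this $P_3$ would be a forbidden sequence $(2,3,2)$, contradicting Lemma~\ref{lem:2-3-2}; hence position $b+2$ is also a $3$. The symmetric argument handles the left side. In particular $f$ carries two consecutive $3$-vertices whenever it has any $2$-vertex.

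Given this structure, the cases $i\in\{1,2,3,4\}$ are immediate and I would dispatch them in a sentence each: a single flanking $3$-vertex is a $P_1$ with no $2$-vertex ($i=1$); a boundary edge incident to a $3$-vertex is a $P_2$ with at most one $2$-vertex ($i=2$); the two consecutive $3$'s extended by one neighbour give a $P_3$ with at most one $2$-vertex ($i=3$); and any $P_4$ that contains these two consecutive $3$'s has at most two $2$-vertices ($i=4$).

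The main obstacle is $i=5$, and this is the only place Lemma~\ref{lem:6-path} is needed. Here I want five consecutive positions carrying at least three $3$'s, and I split on the run structure. If some maximal run of $3$'s has length at least $3$, padding it on both sides yields a $P_5$ with at most two $2$'s. Otherwise, by the flanking claim, every maximal run of $3$'s has length exactly $2$; if moreover some run of $2$'s has length $1$, the window $(3,3,2,3,3)$ centred on it is the desired $P_5$. The remaining possibility is that all runs of $2$'s and all runs of $3$'s have length exactly $2$: since the walk has length at least $8$ there are then two blocks of two $2$'s separated by two $3$'s, producing six consecutive positions with degree pattern $(2,2,3,3,2,2)$, i.e.\ a $P_6$ with four $2$-vertices, contradicting Lemma~\ref{lem:6-path}. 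Thus this last case cannot occur, which closes $i=5$ and hence the lemma. The subtle points to keep honest are that the cyclic word may in principle repeat a cut-vertex, and that the flanking argument needs the $\le 6$ windows to be genuine paths; both are covered by the remark that any at most six consecutive facial positions form a $P_k$, so I only ever invoke the lemmas on legitimate short paths.
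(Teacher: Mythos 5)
Your proof is correct and follows essentially the same route as the paper: both arguments use Lemmas~\ref{lem:2-2-2} and~\ref{lem:2-3-2} to constrain the degree pattern along the facial walk (your ``flanking'' claim is exactly these two lemmas repackaged), dispatch $i \le 4$ immediately, and settle $i=5$ by a short case analysis on windows of length at most $6$, which the remark before the lemma guarantees are genuine paths. The only substantive difference is in the $i=5$ case: the paper splits on whether the face has two consecutive $2$-vertices and, when it does, observes that the four vertices surrounding such a pair are all $3$-vertices, which already yields a good $P_5$ without ever having to exclude the pattern $(2,2,3,3)^m$; your run-length split instead eliminates that last sub-case by exhibiting a $P_6$ of type $(2,2,3,3,2,2)$ and invoking Lemma~\ref{lem:6-path} --- legitimate, since that lemma is proved beforehand, but slightly less economical than the paper's argument, which needs nothing beyond Lemmas~\ref{lem:2-2-2} and~\ref{lem:2-3-2} here.
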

	
	\begin{proof}
		Let $f$ be a face in $G$ of length $k$.
		Since $G$ has girth at least $8$, $k \geq 8$.
		By Lemma~\ref{lem:2-2-2}, $f$ contains a $P_1$ with zero 2-vertices, and a $P_2$ with at most one $2$-vertex. 
		By Lemma \ref{lem:2-3-2}, $f$ contains a $P_3$ with at most one 2-vertex. By Lemmas \ref{lem:2-2-2} and \ref{lem:2-3-2}, $f$ contains a $P_4$ with at most two 2-vertices. If $f$ contains no two consecutive 2-vertices, then $f$ has a $P_5$ with at most two 2-vertices. Otherwise, $f$ contains at least two consecutive 2-vertices, say $v_3$, $v_4$. Let $v_1, \ldots, v_6$ be a $P_6$ on $f$. By Lemmas \ref{lem:2-2-2} and \ref{lem:2-3-2}, $\deg_G(v_j) = 3$ for all $j \in \{1,2,5,6\}$. Thus $v_1, \ldots, v_5$ is a $P_5$ on $f$ with at most two 2-vertices.
	\end{proof}
	
	\begin{lemma}
		\label{lem:11+face}
		For $k \ge 11$, in the facial walk of each $k$-face in $G$ there are at most $\lfloor\frac{k}{2}\rfloor$ $2$-vertices.
	\end{lemma}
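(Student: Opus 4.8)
The plan is to read off the number of $2$-vertices directly from the degree sequence encountered along the facial walk. Write $f$ as a closed facial walk $v_1, v_2, \ldots, v_k$ (indices taken cyclically) and record, for each slot, whether $v_i$ is a $2$-vertex or a $3$-vertex; by Lemma~\ref{lem:no-deg-1} these are the only possibilities. Since $g(G) \ge 8$, any six consecutive vertices of a facial walk are distinct and span a $P_6$ (a repeat among positions at distance at most $5$ would force a cycle of length at most $5$), so in particular every three cyclically consecutive slots $v_{i-1}, v_i, v_{i+1}$ induce a genuine $P_3$. Consequently the reducible configurations already established translate into forbidden local patterns in this cyclic $\{2,3\}$-sequence: Lemma~\ref{lem:2-2-2} forbids three consecutive $2$-vertices, and Lemma~\ref{lem:2-3-2} forbids the pattern $2,3,2$.

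Next I would decompose the cyclic sequence into maximal runs, alternating between blocks of $2$-vertices and blocks of $3$-vertices. The ban on $2,2,2$ means every maximal block of $2$-vertices has length at most $2$. The ban on $2,3,2$ is the crucial ingredient: a maximal block of $3$-vertices of length exactly $1$ is, by maximality, flanked by $2$-vertices on both sides, and (by the first paragraph) those three slots form a $P_3$ realizing the forbidden $2,3,2$; hence every maximal block of $3$-vertices has length at least $2$. If every slot is a $3$-vertex the bound is trivial, and not every slot can be a $2$-vertex, since that would create $2,2,2$ (as $k \ge 11 \ge 3$); so both block types occur and their numbers are equal, say $m$ blocks of each, with $2$-blocks of sizes $a_1, \ldots, a_m \in \{1,2\}$ and $3$-blocks of sizes $b_1, \ldots, b_m \ge 2$.

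Finally I would count. The number of $2$-vertices appearing along the facial walk is $\sum_i a_i \le 2m$, while $\sum_i b_i \ge 2m \ge \sum_i a_i$, so
\[ k = \sum_{i=1}^m a_i + \sum_{i=1}^m b_i \ge 2\sum_{i=1}^m a_i, \]
whence the number of $2$-vertices is at most $\frac{k}{2}$, and being an integer, at most $\lfloor \frac{k}{2} \rfloor$. The argument in fact needs nothing beyond $k \ge 3$; the hypothesis $k \ge 11$ only matters because the smaller faces are treated by sharper, dedicated lemmas. I expect the one delicate point to be the bookkeeping of the first paragraph: one must check that counting \emph{appearances} of $2$-vertices along the walk (rather than distinct vertices) is legitimate, and that the forbidden patterns genuinely apply to every consecutive triple, including wrap-around triples and triples near a vertex the walk might revisit. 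The girth bound is exactly what rules out degenerate triples (for instance, a $3$-vertex cannot sit between two copies of a single $2$-vertex, as that would force a degree-$1$ traversal at the $3$-vertex), so once this is verified the block count is immediate. As an alternative I could cover the walk by consecutive copies of $P_6$ and invoke Lemma~\ref{lem:6-path} (each $P_6$ has at most three $2$-vertices), but the run-length decomposition yields the clean factor $\tfrac12$ without having to manage the remainder when $6 \nmid k$.
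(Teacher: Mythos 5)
Your proof is correct, but it takes a genuinely different route from the paper's. The paper proves this lemma by a covering argument: writing $k \equiv i \pmod 6$ with $i \in [6]$, it places one $P_i$ with at most $\lfloor \frac{i}{2} \rfloor$ $2$-vertices on the face (Lemma~\ref{lem:short-path}, or Lemma~\ref{lem:6-path} when $i=6$) and covers the remaining $k-i$ consecutive vertices of the walk by $\frac{k-i}{6}$ copies of $P_6$, each containing at most three $2$-vertices by Lemma~\ref{lem:6-path}, for a total of at most $\lfloor \frac{i}{2}\rfloor + \frac{k-i}{2} \le \frac{k}{2}$. You instead run a block (run-length) decomposition of the cyclic degree sequence along the walk, using only the two local forbidden patterns: Lemma~\ref{lem:2-2-2} caps every maximal block of $2$'s at length $2$, and Lemma~\ref{lem:2-3-2} forces every maximal block of $3$'s to have length at least $2$, so the $2$-slots can never outnumber the $3$-slots. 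Your route is more self-contained: it bypasses Lemmas~\ref{lem:6-path} and~\ref{lem:short-path} entirely (hence also the reducible configuration $(2,2,3,3,2,2)$ on which Lemma~\ref{lem:6-path} rests), avoids the mod-$6$ remainder bookkeeping, and works verbatim for any face length; the paper's route is shorter on the page only because those path lemmas are needed anyway for the $9$- and $10$-face bounds and can be reused for free. Both arguments share the same prerequisite, which you rightly flag as the delicate point: every three consecutive slots of the facial walk must form a genuine $P_3$, so that the forbidden patterns apply to appearances rather than distinct vertices. One nitpick there: your parenthetical attributes the exclusion of all repeats at distance at most $5$ to the girth alone, but a repeat at distance $2$ is an immediate backtrack, which creates no cycle and is ruled out by the absence of $1$-vertices (Lemma~\ref{lem:no-deg-1}), not by girth; your closing remark about ``a degree-$1$ traversal at the $3$-vertex'' shows you saw this, and the paper makes exactly the same tacit use of Lemma~\ref{lem:no-deg-1} in its own observation preceding Lemma~\ref{lem:short-path}.
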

	
	\begin{proof}
		Let $f$ be a face in $G$ of length $k \ge 11$.
		Let $k \equiv i \pmod{6}$, $i \in [6]$. By Lemma \ref{lem:6-path} or \ref{lem:short-path}, the face $f$ contains a $P_i$ with at most $\lfloor \frac{i}{2} \rfloor$ 2-vertices. The remaining vertices on the facial walk can be covered by $\frac{k-i}{6}$ $P_6$s, and by Lemma \ref{lem:6-path}, each of them contains at most three 2-vertices. Altogether, the number of 2-vertices appearing on the facial walk of $f$ is at most $\lfloor \frac{i}{2} \rfloor + \frac{k-i}{6} \cdot 3 \in \{\frac{k-1}{2}, \frac{k}{2}\}$. Thus, the facial walk of $f$ contains at most $\lfloor \frac{k}{2} \rfloor$ 2-vertices.
	\end{proof}
	
	Note that if $G$ is 2-connected, we can replace ``facial walk of the $k$-face $f$'' above with simply ``$k$-face $f$''. But if $G$ is not 2-connected, it is possible that some vertices are incident with only the face $f$, and thus these vertices appear on the facial walk of $f$ twice.
	
	\begin{lemma}
		\label{lem:10-face}
		Every $10$-face in $G$ is incident with at most four $2$-vertices.
	\end{lemma}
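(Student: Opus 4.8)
The plan is to read off the cyclic sequence of degrees along the facial walk of a $10$-face and show it cannot contain five $2$-vertices. Let $f$ be a $10$-face, and let $v_1, v_2, \ldots, v_{10}$ be its facial walk (indices taken cyclically modulo $10$). First I would record that every vertex incident with $f$ has degree $2$ or $3$: $G$ is connected so there is no $0$-vertex, Lemma~\ref{lem:no-deg-1} gives no $1$-vertex, and $G$ is subcubic. Next I would verify that any three consecutive vertices $v_i, v_{i+1}, v_{i+2}$ of the walk form a sequence in the sense of Section~\ref{sec:notation}: the edges $v_iv_{i+1}$ and $v_{i+1}v_{i+2}$ are present, and the three vertices are pairwise distinct, since at the vertex $v_{i+1}$ of degree at least $2$ the facial walk cannot immediately backtrack, forcing $v_{i+2}\neq v_i$. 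Hence Lemmas~\ref{lem:2-2-2} and~\ref{lem:2-3-2} apply to every consecutive triple, so the cyclic $\{2,3\}$-sequence $\deg_G(v_1), \ldots, \deg_G(v_{10})$ contains neither the pattern $(2,2,2)$ nor the pattern $(2,3,2)$.

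It then remains to prove a purely combinatorial statement: a cyclic $\{2,3\}$-sequence of length $10$ that avoids $(2,2,2)$ and $(2,3,2)$ has at most four entries equal to $2$. I would argue using maximal runs. Avoiding $(2,2,2)$ forces every maximal run of $2$'s to have length at most $2$, and avoiding $(2,3,2)$ forces every maximal run of $3$'s lying between two runs of $2$'s to have length at least $2$. Suppose toward a contradiction that there are at least five $2$'s. As each run of $2$'s has length at most $2$, there are at least $\lceil 5/2 \rceil = 3$ runs of $2$'s; cyclically there are then at least three runs of $3$'s separating them, each of length at least $2$, accounting for at least six $3$'s. This makes the length at least $5 + 6 = 11 > 10$, a contradiction. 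Therefore the facial walk of $f$ carries at most four $2$-vertices.

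Since the number of distinct $2$-vertices incident with $f$ is at most the number of positions on its facial walk occupied by a $2$-vertex, the bound of four holds for $f$; this also settles the case in which $G$ is not $2$-connected, where a vertex may occur on the facial walk of $f$ more than once (compare the remark after Lemma~\ref{lem:11+face}). I expect the main obstacle to be twofold: first, justifying that consecutive triples along the facial walk are honest sequences so that the reducible-configuration lemmas apply even when $f$ is not bounded by a simple cycle; and second, sharpening the bound from the $5$ that the crude estimate ``number of $2$'s $\le$ number of $3$'s'' yields down to the desired $4$. The latter improvement is exactly where integrality enters: five $2$'s force three separate runs, hence at least six $3$'s, overflowing the length $10$.
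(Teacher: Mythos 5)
Your proof is correct, and its combinatorial core takes a noticeably different route from the paper's. The paper splits into two cases according to whether the $10$-face has two consecutive $2$-vertices: if not, Lemma~\ref{lem:2-3-2} spaces the $2$-vertices at distance at least $3$ and gives at most three of them; if so, it pins down four forced $3$-vertices and then invokes Lemma~\ref{lem:6-path} to control the remaining four positions. You instead prove a single clean statement about cyclic $\{2,3\}$-sequences of length $10$ avoiding the patterns $(2,2,2)$ and $(2,3,2)$: five $2$'s would force at least three maximal runs of $2$'s, hence at least three separating runs of $3$'s each of length at least $2$, overflowing the length $10$. This is a valid count (cyclically the numbers of $2$-runs and $3$-runs coincide, and a $3$-run of length $1$ between two $2$-runs is exactly the forbidden $(2,3,2)$), and it buys you two things: no case split, and no reliance on Lemma~\ref{lem:6-path} --- so your argument rests only on the reducible configurations $(2,2,2)$ and $(2,3,2)$, whereas the paper's also uses the configuration $(2,2,3,3,2,2)$ underlying Lemma~\ref{lem:6-path}. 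You are also more explicit than the paper about why consecutive triples of the facial walk are honest paths (the paper instead remarks after Lemma~\ref{lem:11+face} and Lemma~\ref{lem:10-face} that, given girth at least $8$ and no $1$-vertices, the relevant facial walks behave well); your non-backtracking argument at vertices of degree at least $2$ in a simple plane graph is a legitimate way to settle this, and you correctly bound the count of $2$-vertices with multiplicity, which is what the discharging step actually needs.
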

	
	\begin{proof}
		Let $f$ be a $10$-face $v_1, \ldots, v_{10}$ in $G$. If $f$ contains no two consecutive $2$-vertices, then by Lemma \ref{lem:2-3-2}, $f$ contains at most three $2$-vertices. For example, see Figure \ref{fig:10-face-2}.
		
		\begin{figure}[h!!]
			\centering
			
			\begin{tikzpicture}
				[scale=0.7,
				vert/.style={circle,fill=black,draw=black, inner sep=0.05cm},
				s/.style={fill=black!15!white, draw=black!15!white, rounded corners},
				outvert/.style={rectangle,draw=black},
				outedge/.style={line width=1.5pt},
				dom_vert/.style={circle,draw=black,fill=white, inner sep=0.05cm}
				] 
				\pgfmathtruncatemacro{\N}{10}
				\pgfmathtruncatemacro{\R}{2}
				\pgfmathtruncatemacro{\D}{3}
				
				\begin{scope}
					\foreach \x in {1,...,\N}
					\node[vert] (\x) at (18 + \x*360/\N:\R cm) {};
					\foreach \x [remember=\x as \lastx (initially 1)] in {1,...,\N,1}
					\path (\x) edge (\lastx);
					
					\foreach \y in {1,3,4,6,7,8,10}
					\draw (\y) -- (18 + \y*360/\N:\D cm);
					\node[fill=white] at (0,0) {$10$-face};
					
				\end{scope}
				
			\end{tikzpicture}
			\caption{An example of a $10$-face with no two consecutive $2$-vertices.}
			\label{fig:10-face-2}
		\end{figure}
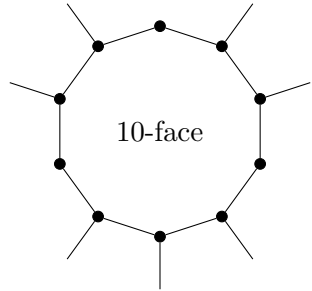
		
		Let $f$ contain two consecutive $2$-vertices; without loss of generality, let $\deg_G(v_1) = \deg_G(v_2) = 2$. By Lemmas \ref{lem:2-2-2} and \ref{lem:2-3-2}, vertices $v_3, v_4, v_{10}, v_9$ are all of degree $3$. By Lemma \ref{lem:6-path}, at most one of $v_5, v_6$ is a $2$-vertex, and at most one of $v_8, v_7$ is a $2$-vertex. Thus, $f$ contains at most four $2$-vertices. For example, see Figure \ref{fig:10-face-22}.
	\end{proof}
	
	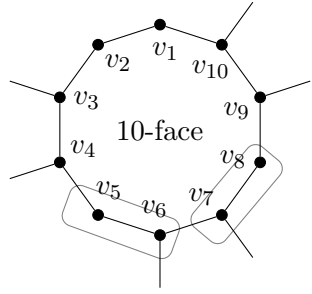
\begin{figure}[h!!]
		\centering
		
		\begin{tikzpicture}
			[scale=0.7,
			vert/.style={circle,fill=black,draw=black, inner sep=0.05cm},
			s/.style={fill=black!15!white, draw=black!15!white, rounded corners},
			s2/.style={fill=white, draw=black!50!white, rounded corners},
			outvert/.style={rectangle,draw=black},
			outedge/.style={line width=1.5pt},
			dom_vert/.style={circle,draw=black,fill=white, inner sep=0.05cm}
			] 
			\pgfmathtruncatemacro{\N}{10}
			\pgfmathtruncatemacro{\R}{2}
			\pgfmathtruncatemacro{\D}{3}
			
			\begin{scope}
				\filldraw[s2, rotate=160] (-1,1.5) rectangle (1.2,2.3){};
				\filldraw[s2, rotate=230] (-1,1.5) rectangle (1,2.3){};
				
				\foreach \x in {1,...,\N}
				\node[vert] (\x) at (18 + \x*360/\N:\R cm) {};
				\foreach \x [remember=\x as \lastx (initially 1)] in {1,...,\N,1}
				\path (\x) edge (\lastx);
				
				\foreach \y in {4,5,7,8,10,1}
				\draw (\y) -- (18 + \y*360/\N:\D cm);
				
				\draw \foreach \x in {1,...,10} {(50+\x*360/\N:1.5) node {$v_{\x}$}};
				
				\node[fill=white] at (0,0) {$10$-face};
				
			\end{scope}
		\end{tikzpicture}
		
		\caption{An example of a $10$-face with two consecutive $2$-vertices.} 
		\label{fig:10-face-22}
	\end{figure}
	
	Note that since the girth of $G$ is at least 8 and there are no 1-vertices, the facial walk of a 10-face is always a cycle.
	
	\begin{lemma}
		\label{lem:9-face}
		Every $9$-face in $G$ is incident with at most three $2$-vertices.
	\end{lemma}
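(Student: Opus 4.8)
The plan is to follow the template of the proof of Lemma~\ref{lem:10-face}, distinguishing two cases according to whether the $9$-face carries two consecutive $2$-vertices. Write the facial walk of the $9$-face $f$ as $v_1, \dots, v_9$; since $g(G) \ge 8$ and $G$ has no $1$-vertex by Lemma~\ref{lem:no-deg-1}, this walk is a cycle, and any six consecutive vertices along it form a $P_6$.

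In the first case, suppose $f$ has no two consecutive $2$-vertices. Then no two $2$-vertices are adjacent, and by Lemma~\ref{lem:2-3-2} no two $2$-vertices lie at cyclic distance exactly $2$ on $f$; hence the $2$-vertices are pairwise at cyclic distance at least $3$ around the $9$-cycle. Since the arcs between consecutive $2$-vertices then sum to $9$ while each has length at least $3$, there can be at most $\lfloor 9/3 \rfloor = 3$ of them.

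In the second case, suppose $v_1$ and $v_2$ are both $2$-vertices. Applying Lemma~\ref{lem:2-2-2} to the triples $v_9 v_1 v_2$ and $v_1 v_2 v_3$ forces $v_3$ and $v_9$ to be $3$-vertices, and then applying Lemma~\ref{lem:2-3-2} to $v_2 v_3 v_4$ and $v_8 v_9 v_1$ forces $v_4$ and $v_8$ to be $3$-vertices as well. Thus the only remaining candidates for $2$-vertices are $v_5, v_6, v_7$, which are not all three $2$-vertices by Lemma~\ref{lem:2-2-2}. I then rule out the possibility that two of $v_5, v_6, v_7$ are $2$-vertices: if $v_5$ and $v_7$ are $2$-vertices but $v_6$ is not, then $v_5 v_6 v_7$ is a sequence $(2,3,2)$, contradicting Lemma~\ref{lem:2-3-2}; while if $v_5 v_6$ or $v_6 v_7$ is a pair of adjacent $2$-vertices, then the $P_6$ on $v_1, \dots, v_6$, respectively on $v_6, v_7, v_8, v_9, v_1, v_2$, contains four $2$-vertices, contradicting Lemma~\ref{lem:6-path}. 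Hence at most one of $v_5, v_6, v_7$ is a $2$-vertex, for a total of at most three $2$-vertices on $f$.

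I expect the crux to be this last elimination. Lemmas~\ref{lem:2-2-2} and~\ref{lem:2-3-2} alone are insufficient, since they permit the cyclic degree pattern $2,2,3,3,2,2,3,3,3$ carrying four $2$-vertices; it is exactly the stronger $P_6$ bound of Lemma~\ref{lem:6-path} that forbids this pattern, the point being that the two blocks of consecutive $2$-vertices then lie close enough to be covered by a single $P_6$.
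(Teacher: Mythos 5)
Your proof is correct and follows essentially the same route as the paper: the same split into the cases of a $9$-face with or without two consecutive $2$-vertices, with Lemma~\ref{lem:2-3-2} handling the first case and Lemmas~\ref{lem:2-2-2}, \ref{lem:2-3-2}, and \ref{lem:6-path} combining to show at most one of $v_5,v_6,v_7$ is a $2$-vertex in the second. Your arc-counting phrasing of the first case and your explicit identification of the two $P_6$'s used in the second are just cleaner write-ups of the paper's own argument.
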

	
	\begin{proof}
		Let $f$ be a $9$-face $v_1, \ldots, v_9$ in $G$. If $f$ has no two consecutive $2$-vertices, then by Lemma \ref{lem:2-3-2}, $f$ contains at most three $2$-vertices (if $\deg_G(v_1) = 2$, then $\deg_G(v_2) = \deg_G(v_3) = \deg_G(v_9) = \deg_G(v_8) = 3$, and since there are no two consecutive $2$-vertices on $f$, at most two of the $v_4, v_5, v_6, v_7$ can be 2-vertices). For example, see Figure \ref{fig:9-face-2}.
		
		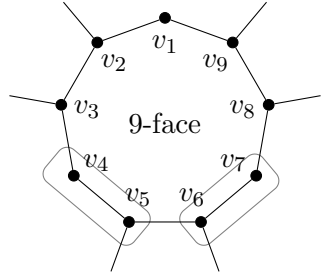
\begin{figure}[h!!]
			\centering
			\begin{tikzpicture}
				[scale=0.7,
				vert/.style={circle,fill=black,draw=black, inner sep=0.05cm},
				s/.style={fill=black!15!white, draw=black!15!white, rounded corners},
				s2/.style={fill=white, draw=black!50!white, rounded corners},
				outvert/.style={rectangle,draw=black},
				outedge/.style={line width=1.5pt},
				dom_vert/.style={circle,draw=black,fill=white, inner sep=0.05cm}
				] 
				\pgfmathtruncatemacro{\N}{9}
				\pgfmathtruncatemacro{\R}{2}
				\pgfmathtruncatemacro{\D}{3}
				
				\begin{scope}
					\filldraw[s2, rotate=140] (-1,1.5) rectangle (1.2,2.3){};
					\filldraw[s2, rotate=220] (-1,1.5) rectangle (1,2.3){};
					
					\foreach \x in {1,...,\N}
					\node[vert] (\x) at (50 + \x*360/\N:\R cm) {};
					\foreach \x [remember=\x as \lastx (initially 1)] in {1,...,\N,1}
					\path (\x) edge (\lastx);
					\draw \foreach \x in {1,...,\N} {(50+\x*360/\N:1.5) node {$v_{\x}$}};
					
					\foreach \y in {2,3,5,6,8,9}
					\draw (\y) -- (50 + \y*360/\N:\D cm);
					\node[fill=white] at (0,0) {$9$-face};
				\end{scope}
			\end{tikzpicture}
			
			\caption{An example of a 9-face with no two consecutive 2-vertices.}
			\label{fig:9-face-2}
		\end{figure}
		
		Suppose that $f$ contains two consecutive $2$-vertices; without loss of generality, let $\deg_G(v_1) = \deg_G(v_2) = 2$. 
		Then by Lemmas~\ref{lem:2-2-2} and~\ref{lem:2-3-2}, $\deg_G(v_3) = \deg_G(v_4) = \deg_G(v_9) = \deg_G(v_8) = 3$. 
		By Lemma \ref{lem:6-path}, at most one of $v_5, v_6$ can be a 2-vertex and at most one of $v_6, v_7$ can be a $2$-vertex. 
		But since $v_5$ and $v_7$ cannot both be $2$-vertices by Lemma \ref{lem:2-3-2}, at most one of $v_5, v_6, v_7$ is a 2-vertex. This implies that $f$ again contains at most three $2$-vertices. For example, see Figure \ref{fig:9-face-22}.
	\end{proof}
	
	\begin{figure}[h!!]
		\centering
		\begin{tikzpicture}
			[scale=0.7,
			vert/.style={circle,fill=black,draw=black, inner sep=0.05cm},
			s/.style={fill=black!15!white, draw=black!15!white, rounded corners},
			s2/.style={fill=white, draw=black!50!white, rounded corners},
			outvert/.style={rectangle,draw=black},
			outedge/.style={line width=1.5pt},
			dom_vert/.style={circle,draw=black,fill=white, inner sep=0.05cm}
			] 
			\pgfmathtruncatemacro{\N}{9}
			\pgfmathtruncatemacro{\R}{2}
			\pgfmathtruncatemacro{\D}{3}
			
			\begin{scope}
				\filldraw[s2, rotate=200] (-2,1.2) rectangle (2,2.5){};
				
				\foreach \x in {1,...,\N}
				\node[vert] (\x) at (50 + \x*360/\N:\R cm) {};
				\foreach \x [remember=\x as \lastx (initially 1)] in {1,...,\N,1}
				\path (\x) edge (\lastx);
				
				\draw \foreach \x in {1,...,\N} {(50+\x*360/\N:1.5) node {$v_{\x}$}};
				
				\foreach \y in {3,4,5,6,8,9}
				\draw (\y) -- (50 + \y*360/\N:\D cm);

				\node[fill=white] at (0,0) {$9$-face};
			\end{scope}
		\end{tikzpicture}
		
		\caption{An example of a 9-face with two consecutive 2-vertices.}\label{fig:9-face-22}
	\end{figure}
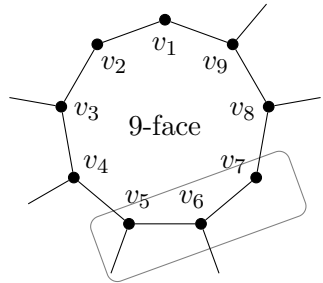
	
	\subsection{Discharging}
	\label{sec:discharging9}
	
	For $x \in V(G) \cup F(G)$, we define the following initial charge:
	$$\mu(x) = \begin{cases}
		2 \deg_G(x) - 6 & \text{if } x \in V(G),\\
		\ell(x) - 6 & \text{if } x \in F(G).
	\end{cases}$$
	
	Since $G$ is planar, Euler's formula implies that the initial charge of $G$ is $$\mu(G) = \sum_{x \in V(G) \cup F(G)} \mu(x) = -12.$$ 
	
	We use the following discharging rule: 
	\begin{itemize}
		\item Every face $f$ sends charge 1 to each of the 2-vertices on the facial walk of $f$, counted with multiplicities.
	\end{itemize}
	Let $\mu^*(x)$ denote the charge of vertices and faces after applying the discharging rule. 
	
	\begin{lemma}
		\label{lem:charge-positive-1}
		For every $x \in V(G) \cup F(G)$, $\mu^*(x) \geq 0$.
	\end{lemma}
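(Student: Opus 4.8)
The plan is to verify $\mu^*(x) \geq 0$ separately for vertices and for faces, in each case reading off the required bound from the structural lemmas already proved. Since $G$ is connected and, by Lemma~\ref{lem:no-deg-1}, has no $1$-vertex, every vertex of $G$ has degree $2$ or $3$, so only two vertex cases arise, and since $g(G) \geq 9$ throughout this section, every facial walk has length at least $9$.

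First I would dispose of the vertices. A $3$-vertex $v$ has initial charge $\mu(v) = 2\cdot 3 - 6 = 0$, and because the discharging rule moves charge only \emph{onto} $2$-vertices, such a $v$ neither sends nor receives, so $\mu^*(v) = 0$. A $2$-vertex $v$ has $\mu(v) = 2\cdot 2 - 6 = -2$. The point here is the multiplicity convention: the number of face-incidences of $v$ counted with multiplicity equals $\deg_G(v) = 2$, since the two edges at $v$ split the plane locally into exactly two angular corners, each lying on one facial walk. Hence $v$ receives charge $1$ exactly twice and $\mu^*(v) = -2 + 2 = 0$.

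Next I would treat a $k$-face $f$, with $\mu(f) = k - 6$; it is enough to bound the number of $2$-vertices on its facial walk, counted with multiplicity. For $k = 9$ I invoke Lemma~\ref{lem:9-face} (at most three $2$-vertices), giving $\mu^*(f) \geq 3 - 3 = 0$; for $k = 10$ I invoke Lemma~\ref{lem:10-face} (at most four), giving $\mu^*(f) \geq 4 - 4 = 0$; and for $k \geq 11$ I invoke Lemma~\ref{lem:11+face} (at most $\lfloor \tfrac{k}{2} \rfloor$), giving $\mu^*(f) \geq (k-6) - \lfloor \tfrac{k}{2} \rfloor = \lceil \tfrac{k}{2} \rceil - 6 \geq 0$, where the final inequality holds because $\lceil \tfrac{k}{2} \rceil \geq 6$ once $k \geq 11$.

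These cases exhaust $V(G) \cup F(G)$, so $\mu^*(x) \geq 0$ everywhere. The argument is essentially bookkeeping, and I do not expect a genuine obstacle, since the substantive work lives in the face lemmas. The only subtlety worth flagging is the consistent use of the ``with multiplicities'' convention on facial walks: it must be applied identically when counting the charge received by a $2$-vertex (matched to its degree via the corner count) and the charge leaving a face (matched to the facial-walk bounds of Lemmas~\ref{lem:9-face}, \ref{lem:10-face}, and~\ref{lem:11+face}), so that the degree-versus-corner count and the $2$-vertex counts align exactly even when $G$ fails to be $2$-connected.
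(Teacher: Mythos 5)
Your proof is correct and follows essentially the same route as the paper's: vertices of degree $2$ receive total charge $2$ (counted with multiplicity over facial-walk corners) to cancel their deficit of $-2$, $3$-vertices are untouched, and faces are handled by citing Lemmas~\ref{lem:9-face}, \ref{lem:10-face}, and~\ref{lem:11+face} for $k = 9$, $k = 10$, and $k \geq 11$ respectively. Your explicit remark about matching the multiplicity convention at $2$-vertices is exactly the point the paper makes when it notes that a $2$-vertex incident with only one face appears twice on that face's facial walk and hence still receives charge $2$.
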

	
	\begin{proof}
		By Lemma~\ref{lem:no-deg-1}, $G$ has no vertex of degree $1$.
		A 2-vertex that is incident with two different faces receives charge 1 from each of them. A 2-vertex that is incident with only one face $f$ appears on the facial walk of $f$ twice, so it receives charge 2 from $f$.
		So $\mu^*(v) = (2 \cdot 2 -6) + 2 \cdot 1 = 0$. If $v$ is a vertex of degree 3, then $\mu^*(v) = \mu(v) = 0$. 
		
		If $f$ is a $k$-face, $k \geq 11$, then by Lemma \ref{lem:11+face}, the facial walk of $f$ contains at most $\left \lfloor \frac{k}{2} \right \rfloor$ 2-vertices. Thus, $$\mu^*(f) \geq (k-6) - \left \lfloor \frac{k}{2} \right \rfloor = \left \lceil \frac{k}{2} \right \rceil - 6 \geq 0$$ since $k \geq 11$.
		
		If $f$ is a $10$-face, then by Lemma \ref{lem:10-face}, $f$ is incident with at most four $2$-vertices. Hence, $\mu^*(f) \geq (10-6) - 4 = 0$. If $f$ is a $9$-face, then by Lemma \ref{lem:9-face}, $f$ is incident with at most three $2$-vertices. Hence, $\mu^*(f) \geq (9-6) - 3 = 0$.
	\end{proof}
	
	Clearly it holds that $$-12 = \mu(G) = \mu^*(G) \geq 0,$$ which is a contradiction. This concludes the proof of Theorem~\ref{thm:main} for graphs with girth at least $9$.
	
	\section{Graphs with girth at least 8}
	\label{sec:girth8}
	
	In this section, we give the full proof of Theorem \ref{thm:main}, so proving the desired bound for all subcubic planar graphs with girth at least $8$. The structure of the proof is the same as for graphs with girth at least $9$, but additional care is needed to deal with $8$-faces. The discharging method used in the last step of the proof is also similar as in the case of graphs with girth at least $9$, but we include it in full for completeness.
	
	\subsection{Reducible configurations}
	\label{sec:reducible8}
	
	A minimal counterexample $G$ in this case satisfies all reducible configurations presented in Section \ref{sec:reducible9}. Thus, Lemmas \ref{lem:1-2}-\ref{lem:9-face} still hold. In the rest of this section, we present additional reducible configurations with respect to 8-faces.
	
	\begin{definition}
		A face $f$ in $G$ is a \emph{bad $8$-face} if it is an 8-face $v_1, \ldots, v_8$ with vertices of degrees $(2$, $2$, $3$, $3$, $2$, $3$, $3$, $3)$ (see Figure \ref{fig:bad-8-face}). If the 8-face is not bad, we say it is a \emph{good $8$-face}. The two consecutive $3$-vertices on a bad 8-face $f$ that both have a neighboring $2$-vertex on $f$ (vertices $v_3$ and $v_4$ in the above notation) are called \emph{contributing vertices} of $f$. A face adjacent to the bad $8$-face $f$ that contains the two contributing vertices of $f$ is called a \emph{contributing face} of $f$. 
	\end{definition}
	
	Recall that a vertex $v$ is incident with a face $f$ (or, $v$ belongs to $f$) if $v$ is one of the vertices on $f$. 
	For a contributing face $f$, let the \emph{set of bad 8-faces incident with $f$}, $\mathcal{C}(f)$, be the set of all bad $8$-faces whose two contributing vertices are on $f$, i.e.\ $g \in \mathcal{C}(f)$ means that $g$ is a bad $8$-face, and the two contributing vertices of $g$ belong to $f$. Shortly, we will say that a bad 8-face $g$ is incident with its contributing face $f$.
	
	\begin{figure}[!ht]
		\centering
		\begin{tikzpicture}
			[scale=0.7,
			vert/.style={circle,fill=black,draw=black, inner sep=0.05cm},
			s/.style={fill=black!15!white, draw=black!15!white, rounded corners},
			s2/.style={fill=white, draw=black!50!white, rounded corners},
			outvert/.style={rectangle,draw=black},
			outedge/.style={line width=1.5pt},
			dom_vert/.style={circle,draw=black,fill=white, inner sep=0.05cm}
			] 
			\pgfmathtruncatemacro{\N}{8}
			\pgfmathtruncatemacro{\R}{2}
			\pgfmathtruncatemacro{\D}{3}
			
			\clip (-7.5,-4.3) rectangle (3,3); 
			
			\begin{scope}[>=Stealth]
				\foreach \x in {1,...,\N}
				\node[vert] (\x) at (23 + \x*360/\N:\R cm) {};
				
				\filldraw[pattern=dots] (23+360/8:2) -- (23+2*360/8:2) -- (23+ 3*360/8:2) -- (23+4*360/8:2) -- (23+5*360/8:2) -- (23+6*360/8:2) -- (23+7*360/8:2) -- (23+8*360/8:2) -- (23+360/8:2);
				
				\draw \foreach \x in {1,...,\N}{(23+\x*360/\N:1.5) node {$v_{\x}$}};
				
				\foreach \y in {6,7,8}
				\draw (\y) -- (23 + \y*360/\N:\D cm);
				
				\draw    (3) to[out=135,in=-135,distance=10cm] (4);
				
				\node (l1) at (0,0) {\small{bad 8-face $f$}};
				\node (l2) at (-4.6,0) {\small{contributing face of $f$}};
				\node (l4) at (-4,-4) {\small{contributing vertices of $f$}};
				
				\draw[->] (l4) -- (3);
				\draw[->] (l4) -- (4);
				
			\end{scope}
		\end{tikzpicture}
		\caption{A bad $8$-face $f$. The dotted background in the figures in this section is used only to help the reader recognize the face of the main focus.}
		\label{fig:bad-8-face}
	\end{figure}
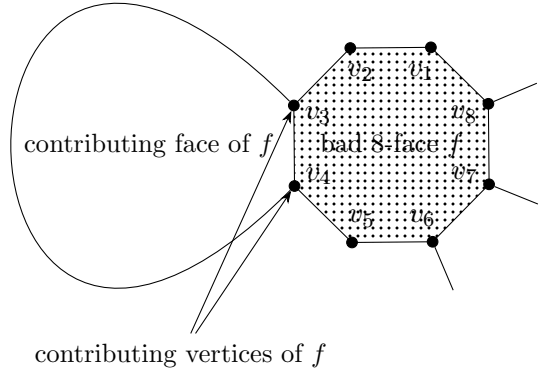
	
	\begin{lemma}
		\label{lem:8-face-weak}
		Every $8$-face in $G$ is either incident with at most two $2$-vertices or it is a bad $8$-face.
	\end{lemma}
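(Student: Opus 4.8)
The plan is to analyze the cyclic degree sequence of the $8$-face directly, using only the local reducible configurations already established. First, recall (as noted just before the statement and in the remark after Lemma~\ref{lem:10-face}) that since $g(G)\ge 8$ and $G$ has no $1$-vertex by Lemma~\ref{lem:no-deg-1}, the facial walk of an $8$-face is a cycle $v_1v_2\cdots v_8$, and every $i$ consecutive vertices with $i\le 6$ form a $P_i$. Writing the degrees $d_1,\dots,d_8\in\{2,3\}$ cyclically, I would group the $2$-vertices into maximal blocks of consecutive $2$-vertices. Lemma~\ref{lem:2-2-2} forbids three consecutive $2$-vertices, so each block has length $1$ or $2$; and Lemma~\ref{lem:2-3-2} forbids the pattern $(2,3,2)$, so a single $3$-vertex between two $2$-vertices is impossible, i.e.\ any two distinct blocks are separated by a gap of at least two $3$-vertices.

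Let $b$ be the number of blocks, with block lengths $a_1,\dots,a_b\in\{1,2\}$ and gap lengths $g_1,\dots,g_b\ge 2$, so that $\sum_i a_i+\sum_i g_i=8$. Assume the face carries at least three $2$-vertices, i.e.\ $t:=\sum_i a_i\ge 3$; the goal is to force $f$ to be bad. Since $\sum_i g_i\ge 2b$, we obtain $t\le 8-2b$, so $t\ge 3$ immediately gives $b\le 2$. A single block contributes at most two $2$-vertices, ruling out $b\le 1$; hence $b=2$ and $t\in\{3,4\}$.

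The two remaining cases are short. If $t=4$, both blocks have length $2$ and both gaps length $2$, so the cyclic sequence is $2,2,3,3,2,2,3,3$; but then six consecutive vertices (for instance $v_1,\dots,v_6$) form a $P_6$ containing four $2$-vertices, contradicting Lemma~\ref{lem:6-path}. Thus $t=3$, and $b=2$ forces one block of length $2$, one block of length $1$, and gaps of lengths $2$ and $3$ in some cyclic order. Up to rotation and reflection this is exactly $(2,2,3,3,2,3,3,3)$, i.e.\ $f$ is a bad $8$-face, as required.

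The only genuinely non-routine point is the case $t=4$: the sequence $2,2,3,3,2,2,3,3$ survives both Lemma~\ref{lem:2-2-2} and Lemma~\ref{lem:2-3-2}, so it must be eliminated by the longer-range bound of Lemma~\ref{lem:6-path}, and recognizing that this is precisely where the $P_6$ count is needed is the crux of the argument. A minor care point is checking that the two orderings of the gaps $\{2,3\}$ in the $t=3$ case agree under reflection, so that both genuinely yield the bad $8$-face pattern rather than a new configuration.
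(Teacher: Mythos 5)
Your proof is correct and follows essentially the same route as the paper: both arguments use Lemma~\ref{lem:2-2-2} and Lemma~\ref{lem:2-3-2} to constrain the cyclic degree pattern (blocks of at most two consecutive $2$-vertices separated by at least two $3$-vertices) and then invoke Lemma~\ref{lem:6-path} to eliminate the pattern $(2,2,3,3,2,2,3,3)$, leaving only the bad $8$-face when three $2$-vertices are present. Your block/gap bookkeeping is just a slightly more systematic packaging of the paper's case split on whether two consecutive $2$-vertices occur.
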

	
	\begin{proof}
		Let $f$ be an $8$-face $v_1, \ldots, v_8$ in $G$. If $f$ has no two consecutive $2$-vertices, then by Lemma \ref{lem:2-3-2}, $f$ contains at most two $2$-vertices (if $\deg_G(v_1) = 2$, then $\deg_G(v_2) = \deg_G(v_3) = \deg_G(v_8) = \deg_G(v_7) = 3$, and since there are no two consecutive $2$-vertices on $f$, at most one of $v_4, v_5, v_6$ can be a $2$-vertex). 
		
		Suppose that $f$ contains two consecutive $2$-vertices; without loss of generality, $\deg_G(v_1) = \deg_G(v_2) = 2$. Then by Lemma \ref{lem:2-3-2}, $\deg_G(v_3) = \deg_G(v_4) = \deg_G(v_8) = \deg_G(v_7) = 3$. By Lemma \ref{lem:6-path}, at most one of $v_5, v_6$ can be a $2$-vertex. 
		Thus, $f$ contains at most three $2$-vertices. 
		If $f$ contains exactly three $2$-vertices, then by the above argument, $f$ is a bad $8$-face.
	\end{proof}
	
	Note that from now on, even though the girth of the graph is at least 8, it may happen that for some $v \in N_G(S) \setminus S$, $|N_G(v) \cap S| \geq 2$. Thus planarity of $G$ and $g(G) \geq 8$ will both be used to limit the number of common neighbors of vertices from $S$.
	
	\begin{lemma}
		\label{lem:good-3333}
		Both contributing vertices of a bad 8-face are adjacent to a 3-vertex on a contributing face. Additionally, one of the two contributing vertices of a bad $8$-face is adjacent to a 3-vertex whose neighbors are all 3-vertices.
	\end{lemma}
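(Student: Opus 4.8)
The plan is to fix notation for the bad $8$-face, dispatch the first statement with a one-line appeal to the forbidden $(2,3,2)$, and then prove the second statement by contradiction, forcing a local structure and squeezing it against the face lemmas via the planar embedding. Write the bad $8$-face as $f = v_1 v_2 \cdots v_8$ with degree sequence $(2,2,3,3,2,3,3,3)$, so that the contributing vertices are $v_3$ and $v_4$, and for $i \in \{3,4\}$ let $u_i$ be the third neighbour of $v_i$ (the one not on $f$). The embedding places $u_3, v_3, v_4, u_4$ consecutively on the contributing face $g$ (the face meeting $f$ along $v_3 v_4$), and $u_3 \neq u_4$ since $u_3 = u_4$ would create the triangle $u_3 v_3 v_4$, impossible as $g(G) \ge 8$. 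For the first statement: $v_2$ is a $2$-vertex adjacent to $v_3$, so if $u_3$ were a $2$-vertex then $v_2, v_3, u_3$ would be a sequence $(2,3,2)$, forbidden by Lemma~\ref{lem:2-3-2}; as $G$ has no $1$-vertex (Lemma~\ref{lem:no-deg-1}), $u_3$ is a $3$-vertex on $g$, and symmetrically (using $v_5, v_4, u_4$) so is $u_4$.

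For the second statement I would argue by contradiction, assuming neither $u_3$ nor $u_4$ has all neighbours of degree $3$. Since $v_3, v_4$ are $3$-vertices, this forces a $2$-vertex $a \in N_G(u_3)$ and a $2$-vertex $c \in N_G(u_4)$. I would first record the forced local structure. The remaining neighbour $b$ of $u_3$ must be a $3$-vertex, as otherwise $a, u_3, b$ is a $(2,3,2)$; likewise the remaining neighbour $d$ of $u_4$ is a $3$-vertex. Moreover the second neighbour $a'$ of $a$ must be a $3$-vertex, for otherwise $a', a, u_3, v_3, v_2, v_1$ is a $P_6$ carrying four $2$-vertices, contradicting Lemma~\ref{lem:6-path}; symmetrically $c'$ is a $3$-vertex. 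The girth bound guarantees that $u_3, u_4, a, c, b, d, a', c'$ are pairwise distinct (any coincidence yields a cycle shorter than $8$).

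The heart of the proof is then a planar case analysis. The non-$f$ face $h$ incident with the consecutive $2$-vertices $v_1, v_2$ contains the path $u_3, v_3, v_2, v_1, v_8$; the analogous face $h'$ at $v_5$ contains $u_4, v_4, v_5, v_6$; and $g$ contains $u_3, v_3, v_4, u_4$. Depending on the rotation at $u_3$ (resp.\ $u_4$), the $2$-vertex $a$ (resp.\ $c$) lies either on $g$ or on $h$ (resp.\ $h'$), so in each branch one of these faces acquires an extra $2$-vertex beyond those already present, and I would drive the resulting count against the face lemmas (Lemmas~\ref{lem:8-face-weak}, \ref{lem:9-face}, \ref{lem:10-face}, \ref{lem:11+face}). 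I expect this last step to be the main obstacle: the configuration is deliberately tight, so the natural reducible sets --- the tree spanned by $f$ together with $a, c$, or the ``theta'' formed by $f$ and $h$ --- fall just short of violating the \nameref{lem:key} (their cost exceeds the available weight slack by a single boundary edge), and the auxiliary faces $h, h'$ can themselves realise the bad-$8$-face degree pattern. Hence the contradiction cannot come from one reducible configuration in isolation; it must be extracted from planarity together with the precise length-versus-$2$-vertex trade-off forced on the faces incident with the contributing vertices.
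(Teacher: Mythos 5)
Your proof of the first assertion is correct and is exactly the paper's: the contributing vertex $v_3$ has the $2$-vertex $v_2$ as a neighbour, so its third neighbour cannot have degree $2$ by Lemma~\ref{lem:2-3-2} (nor degree $1$ by Lemma~\ref{lem:no-deg-1}), and symmetrically for $v_4$. The supporting local structure you derive for the second assertion (the remaining neighbours $b,d$ of $u_3,u_4$ are $3$-vertices by Lemma~\ref{lem:2-3-2}; the vertices are pairwise distinct by the girth bound) is also correct. But the second assertion is where the content of the lemma lies, and there you do not give a proof: you describe a planar case analysis driving an ``extra'' $2$-vertex onto one of the faces $g$, $h$, $h'$ and then concede that you cannot close the argument. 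That plan does not obviously work --- Lemmas~\ref{lem:8-face-weak}, \ref{lem:9-face}, \ref{lem:10-face} and \ref{lem:11+face} only bound the number of $2$-vertices per face, and a single additional $2$-vertex $a$ on $g$ or $h$ violates none of them (those faces may be long, and nothing yet controls their other vertices).

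Your diagnosis that ``the contradiction cannot come from one reducible configuration in isolation'' is the genuine gap, and it is simply wrong: the paper finishes with exactly one application of the \nameref{lem:key}. The right set is not your $12$-vertex candidate $V(f)\cup\{u_3,u_4,a,c\}$ but its enlargement by the two $3$-vertex neighbours $b,d$ of $u_3,u_4$, i.e.\ $S=N_G[v_1,u_3,u_4]\cup\{v_5,v_6,v_7\}$ in your notation ($14$ vertices, $5$ of degree $2$ and $9$ of degree $3$). Then $\gamma(G[S])\le 4$ (dominate by $v_1,v_6,u_3,u_4$), $w_G(S)\ge 5\cdot 11+9\cdot 7=118$, there are $9$ edges leaving $S$, and planarity plus the girth bound allow at most one pair of those edges to share an endpoint outside $S$, so $c_G(S)\le 7\cdot 4+9=37$; the \nameref{lem:key} yields $80>118-37=81$, a contradiction. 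The point you missed is quantitative: adjoining $b$ and $d$ adds $2\cdot 7=14$ to the weight while the number of boundary edges only goes from $7$ to $9$ (raising the cost bound by roughly $8$), which is precisely the margin that turns your ``falls just short'' set into a working reducible configuration.
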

	
	\begin{proof}
		Let $v_1, \ldots, v_8$ be a bad 8-face on $G$ with $\deg_G(v_i) = 2$ for $i \in \{1,2,5\}$, so $v_3$ and $v_4$ are its contributing vertices. Let $w_3$, $w_4$ be the other neighbors of $v_3$, $v_4$, respectively.
		By Lemma \ref{lem:2-3-2}, $\deg_G(w_3) = \deg_G(w_4) =3$.
		Suppose that both $w_3$ and $w_4$ have a neighbor of degree $2$, say $x_3$ and $x_4$, respectively. See Figure~\ref{fig:good-3333} for example.
		
		\begin{figure}[h!!]
			\centering
			\begin{tikzpicture}
				[
				vert/.style={circle,fill=black,draw=black, inner sep=0.05cm},
				s/.style={fill=black!15!white, draw=black!15!white, rounded corners},
				outvert/.style={rectangle,draw=black},
				outedge/.style={line width=1.5pt},
				dom_vert/.style={circle,draw=black,fill=white, inner sep=0.05cm}
				] 
				
				\draw[pattern=dots] (0,1) rectangle (3,0);
				
				\node at (1.5,0.5) {bad $8$-face};
				
				\filldraw[s] (0.8,-0.2) rectangle (3.2,0.2) {};
				\filldraw[s] (0.8,0.8) rectangle (3.2,1.2) {};
				
				\filldraw[s] (-2.2,-0.2) rectangle (0.2,0.2){};
				\filldraw[s] (-2.2,0.8) rectangle (0.2,1.2){};
				
				\filldraw[s] (-1.2,-1.2) rectangle (-0.8,0.2) {};
				\filldraw[s] (-1.2,0.8) rectangle (-0.8,2.2) {};
				
				\node[vert, label={-90:$v_4$}] (v4) at (0,0) {};
				\node[vert, label={-90:$v_5$}] (v5) at (1,0) {};
				\node[dom_vert, label={-45:$v_6$}] (v6) at (2,0) {};
				\node[vert, label={-90:$v_7$}] (v7) at (3,0) {};
				\node[vert, label={90:$v_3$}] (v3) at (0,1) {};
				\node[vert, label={90:$v_2$}] (v2) at (1,1) {};
				\node[dom_vert, label={90:$v_1$}] (v1) at (2,1) {};
				\node[vert, label={90:$v_8$}] (v8) at (3,1) {};
				\node[dom_vert, label={45:$w_3$}] (w3) at (-1,1) {};
				\node[dom_vert, label={-45:$w_4$}] (w4) at (-1,0) {};
				\node[vert, label={90:$x_3$}] (x3) at (-2,1) {};
				\node[vert] (x3') at (-1,2) {};
				\node[vert, label={-90:$x_4$}] (x4) at (-2,0) {};
				\node[vert] (x4') at (-1,-1) {};
				
				\node[outvert] (v81) at (4.5,1) {$3$};
				\node[outvert] (v71) at (4.5,0) {$2^+$};
				\node[outvert] (v61) at (2,-1.5) {$3$};
				\node[outvert] (x31) at (-3.5,1) {$2^+$};
				\node[outvert] (x41) at (-3.5,0) {$2^+$};
				\node[outvert] (x3'1) at (-1.8,3.2) {$2^+$};
				\node[outvert] (x3'2) at (-0.2,3.2) {$3$};
				\node[outvert] (x4'1) at (-1.8,-2.2) {$2^+$};
				\node[outvert] (x4'2) at (-0.2,-2.2) {$3$};
				
				\node at (-0.5,0.5) {$S$};
				\draw (v4) -- (v5) -- (v6) -- (v7) -- (v8) -- (v1) -- (v2) -- (v3) -- (v4);
				\draw (v4) -- (w4) -- (x4);
				\draw (w4) -- (x4');
				\draw (v3) -- (w3) -- (x3);
				\draw (w3) -- (x3');
				
				\draw[outedge] (v8)-- (v81);
				\draw[outedge] (v7)-- (v71);
				\draw[outedge] (v6)-- (v61);
				\draw[outedge] (x3)-- (x31);
				\draw[outedge] (x4) -- (x41);
				\draw[outedge] (x3'1) -- (x3') -- (x3'2);
				\draw[outedge] (x4'1) -- (x4') -- (x4'2);
				
			\end{tikzpicture}
			
			\caption{The configuration from Lemma \ref{lem:good-3333}.}
			\label{fig:good-3333}
		\end{figure}
		
		Let $S = N_G[v_1, w_3, w_4] \cup \{v_5, v_6, v_7\}  \subseteq N_G[v_1, v_6, w_3, w_4]$.
		There are at most nine edges between $S$ and $G - S$. We have $\gamma(G[S]) = 4$, $w_G(S) \geq 5 \cdot 11  + 9 \cdot 7 = 118$, and $c_G(S) \leq 7 \cdot 4 + 9 = 37$ (by planarity and the girth condition, at most one pair of vertices in $S$ can have a common neighbor in $N_G(S) \setminus S$; for example, $x_3$ and $v_7$, or $x_4$ and $v_8$, can have a common neighbor in $N_G(S) \setminus S$). By the \nameref{lem:key}, we obtain $80 > 118-37 = 81$, which is a contradiction.
	\end{proof}
	
	\begin{lemma}
		\label{lem:adj-bad-faces}
		No two adjacent bad $8$-faces of $G$ share a contributing vertex.
	\end{lemma}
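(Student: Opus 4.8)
The plan is to assume for contradiction that $G$ contains two adjacent bad $8$-faces $f$ and $g$ sharing a contributing vertex, pin down the resulting local structure so tightly that it becomes rigid, and then finish with a single application of the \nameref{lem:key}. Write $f = v_1 v_2 \cdots v_8$ with degree sequence $(2,2,3,3,2,3,3,3)$, so that $v_3, v_4$ are the contributing vertices of $f$; let $w_3$ be the third neighbor of $v_3$. Without loss of generality the shared contributing vertex is $v_3$. Since $v_3$ is a $3$-vertex it lies on exactly three faces: $f$, the contributing face $C_f$ (bounded at $v_3$ by $v_3v_4$ and $v_3w_3$), and a third face $T$ (bounded at $v_3$ by $v_3w_3$ and $v_3v_2$). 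On $C_f$ the two neighbors of $v_3$ are $v_4$ and $w_3$, both $3$-vertices, so $v_3$ cannot be a contributing vertex of $C_f$; hence $g = T$, with $v_2$ the $2$-vertex neighbor of $v_3$ on $g$ and $w_3$ its partner contributing vertex (forcing $\deg_G(w_3)=3$, e.g.\ by Lemma~\ref{lem:2-3-2}). Because $v_1$ and $v_2$ are $2$-vertices, their two incident faces are forced to be $f$ and $g$, so $f$ and $g$ in fact share the entire path $v_8 v_1 v_2 v_3$. Thus $g = v_1 v_2 v_3 w_3 u_5 u_6 u_7 v_8$ for new vertices with $u_5$ a $2$-vertex and $w_3, u_6, u_7$ $3$-vertices, and $V(f)\cup V(g)$ consists of $12$ vertices forming the $8$-cycle $f$ together with the path $v_3 - w_3 - u_5 - u_6 - u_7 - v_8$; the girth hypothesis guarantees this configuration is chord-free and rigid.

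The first attempt is the \nameref{lem:key} on $S_0 = V(f)\cup V(g)$, where one checks $\gamma(G[S_0]) = 4$, $w_G(S_0) = 4\cdot 11 + 8\cdot 7 = 100$, and (by $g(G)\ge 8$) the six edges leaving $S_0$, emanating from $v_4, v_6, v_7, w_3, u_6, u_7$, reach six distinct vertices each with a single neighbor in $S_0$, so $c_G(S_0) = 24$ by Table~\ref{tab:costs}. This falls just short of a contradiction: we would need $c_G(S_0)\le 20$, whereas $c_G(S_0)=24$. The key idea is to \emph{enlarge $S_0$ for free}. Let $x$ and $y$ be the third neighbors of the ``middle'' $3$-vertices $v_6\in V(f)$ and $u_6\in V(g)$, and set $S = S_0\cup\{x,y\}$; the girth condition shows $x,y$ are new and distinct. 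Crucially, the tailored dominating set $\{v_1, v_3, v_6, u_6\}$ dominates $G[S]$ (now $v_6$ and $u_6$ cover $x$ and $y$), so $\gamma(G[S])\le 4$, while $w_G(S)\ge 100 + 2\cdot 7 = 114$ since $x,y$ are $2$- or $3$-vertices by Lemma~\ref{lem:no-deg-1}.

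It remains to bound $c_G(S)$. After absorbing $x$ and $y$, the edges leaving $S$ are the four fixed ones (from $v_4, v_7, w_3, u_7$) together with at most two further edges from each of $x,y$, so at most eight in total, and all external vertices have degree $2$ or $3$. The plan is to observe that a common external neighbour of two $S$-endpoints at distance $d$ in $G$ would create a cycle of length $d+2$; since every pair of the endpoints $v_4, v_7, w_3, u_7, x, y$ other than $\{x,y\}$ lies at distance at most $5$, planarity and $g(G)\ge 8$ force all but at most one external vertex to have a single neighbour in $S$, whence $c_G(S)\le 8\cdot 4 + 1 = 33$. Then $20\gamma(G[S]) \le 80 < 81 \le w_G(S) - c_G(S)$, contradicting the \nameref{lem:key}; smaller degrees of $x,y$ only leave more room. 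The main obstacle is exactly this final balance: the configuration is reducible with almost no slack, so one must choose the enlargement $\{x,y\}$ and the dominating set $\{v_1,v_3,v_6,u_6\}$ in tandem so that $\gamma$ does not increase, and then use the girth bound delicately to keep the cost of the degree-unknown vertices $x,y$ and their neighbours under control (the lone borderline pair $\{x,y\}$, at distance $6$, being handled by Lemma~\ref{lem:cost-bound}).
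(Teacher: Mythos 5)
There is a genuine gap: your ``without loss of generality the shared contributing vertex is $v_3$'' is not a loss of generality. A bad $8$-face with degree sequence $(2,2,3,3,2,3,3,3)$ has no symmetry exchanging its two contributing vertices: $v_3$ is the contributing vertex whose $2$-neighbour $v_2$ belongs to the consecutive pair $v_1,v_2$ of $2$-vertices, while $v_4$ is the one whose $2$-neighbour is the isolated $2$-vertex $v_5$. (Equivalently: the shared vertex $u$ has a unique $2$-neighbour $z$ by Lemma~\ref{lem:2-3-2}, and whether $z$'s other neighbour has degree $2$ or $3$ decides which of the two inequivalent situations you are in; both can occur.) Your entire structural analysis --- that the two bad faces share the whole path $v_8v_1v_2v_3$ because $v_1$ and $v_2$ are $2$-vertices --- is valid only when the shared vertex is $v_3$. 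When the shared vertex is $v_4$, the second bad face instead contains the path $v_4v_5v_6$ (forced because $v_5$ is a $2$-vertex), the union $V(f)\cup V(g)$ has $13$ vertices with five $2$-vertices and only six outgoing edges, and a separate (in fact easier: $S=N_G[v_1,v_4,v_6,w_1]$ gives $80>111-24=87$ directly, with no enlargement needed) application of the \nameref{lem:key} is required. The paper treats exactly these two cases; you have omitted the second.

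The case you do handle is correct and is essentially the paper's first case: the same enlargement of $V(f)\cup V(g)$ by the third neighbours of the two ``middle'' $3$-vertices, the same dominating set of size $4$, the same verification that the only pair of boundary vertices far enough apart ($g(G)\ge 8$) to share an external neighbour is $\{x,y\}$, the same cost bound $33$, and the same razor-thin margin $80<81$. So once you add the missing $v_4$-case, the argument goes through.
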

	
	\begin{proof}
		Let $v_1, \ldots, v_8$ be a bad $8$-face $f$ on $G$ with $\deg_G(v_i) = 2$ for $i \in \{1,2,5\}$. Recall that by Lemma~\ref{lem:good-3333}, two bad $8$-faces cannot have both $v_3$ and $v_4$ in common.
		
		First suppose that the vertex $v_3$ belongs to another bad $8$-face $f'$ of $G$. 
		Let its vertices be $v_1$, $v_2$, $v_3$, $w_4$, $w_5$, $w_6$, $w_7$, $v_8$.
		See Figure~\ref{fig:adj-bad-faces-1} for example.
		Since $f'$ is a bad $8$-face and $v_3$ is one of its contributing vertices, $w_4$ is also a contributing vertex, and thus $\deg_G(w_5)=2$. Let $z$ be the other neighbor of $w_6$. By Lemma \ref{lem:2-3-2}, $\deg_G(z) = 3$. Let the other neighbor of $v_6$ be denoted by $y$. By Lemma \ref{lem:2-3-2}, $\deg_G(y) = 3$. 
		
		\begin{figure}[h!!]
			\centering
			\begin{tikzpicture}
				[
				vert/.style={circle,fill=black,draw=black, inner sep=0.05cm},
				s/.style={fill=black!15!white, draw=black!15!white, rounded corners},
				outvert/.style={rectangle,draw=black},
				outedge/.style={line width=1.5pt},
				dom_vert/.style={circle,draw=black,fill=white, inner sep=0.05cm}
				] 
				
				\draw[pattern=dots] (0,1) rectangle (3,0);
				
				\node at (1.5,0.4) {\small{bad $8$-face $f$}};
				\node[fill=white] at (1.5,1.4) {\small{bad $8$-face $f'$}};
				
				\filldraw[s] (0.8,-0.2) rectangle (3.2,0.2) {};
				\filldraw[s] (1.8,-1.2) rectangle (2.2,0.2){};
				
				\filldraw[s] (0.8,0.8) rectangle (3.2,1.2) {};
				
				\filldraw[s] (0.8,1.8) rectangle (3.2,2.2) {};
				\filldraw[s] (1.8,1.8) rectangle (2.2,3.2){};
				
				\filldraw[s] (-0.2,-0.2) rectangle (0.2,2.2){};

				\node[vert, label={-90:$v_4$}] (v4) at (0,0) {};
				\node[vert, label={-90:$v_5$}] (v5) at (1,0) {};
				\node[dom_vert, label={-45:$v_6$}] (v6) at (2,0) {};
				\node[vert, label={-90:$v_7$}] (v7) at (3,0) {};
				\node[dom_vert, label={180:$v_3$}] (v3) at (0,1) {};
				\node[vert, label={-90:$v_2$}] (v2) at (1,1) {};
				\node[dom_vert, label={-90:$v_1$}] (v1) at (2,1) {};
				\node[vert, label={0:$v_8$}] (v8) at (3,1) {};
				\node[vert, label={90:$w_4$}] (w4) at (0,2) {};
				\node[vert, label={45:$w_5$}] (w5) at (1,2) {};
				\node[dom_vert, label={-90:$w_6$}] (w6) at (2,2) {};
				\node[vert, label={90:$w_7$}] (w7) at (3,2) {};
				\node[vert, label={0:$z$}] (z) at (2,3) {};
				\node[vert, label={0:$y$}] (y) at (2,-1) {};
				
				\node[outvert] (v41) at (-1.5,0) {$3$};
				\node[outvert] (w41) at (-1.5,2) {$3$};
				\node[outvert] (v71) at (4.5,0) {$2^+$};
				\node[outvert] (w71) at (4.5,2) {$2^+$};
				\node[outvert] (y1) at (1.2,-2.2) {$3$};
				\node[outvert] (y2) at (2.8,-2.2) {$2^+$};
				\node[outvert] (z1) at (1.2,4.2) {$3$};
				\node[outvert] (z2) at (2.8,4.2) {$2^+$};
				
				\node at (-0.5,0.5) {$S$};
				\draw (v4) -- (v5) -- (v6) -- (v7) -- (v8) -- (v1) -- (v2) -- (v3) -- (v4);
				\draw (v6) -- (y);
				\draw (v3) -- (w4) -- (w5) -- (w6) -- (w7) -- (v8);
				\draw (w6) -- (z);
				\draw[outedge] (v4)-- (v41);
				\draw[outedge] (v7)-- (v71);
				\draw[outedge] (w4)-- (w41);
				\draw[outedge] (w7)-- (w71);
				\draw[outedge] (y)-- (y1);
				\draw[outedge] (y)-- (y2);
				\draw[outedge] (z)-- (z1);
				\draw[outedge] (z)-- (z2);
			\end{tikzpicture}
			\caption{Two bad $8$-faces have vertex $v_3$ in common.}
			\label{fig:adj-bad-faces-1}
		\end{figure}
		
		Take $S = \{ v_1,\ldots, v_8, w_4, \ldots, w_7, y, z \} = N_G[v_1,v_3,v_6,w_6]$. By above, there are exactly eight edges between $S$ and $G - S$. We have $\gamma(G[S]) = 4$, $w_G(S) = 4 \cdot 11  + 10 \cdot 7 = 114$, and $c_G(S) \leq 6 \cdot 4 + 9 = 33$ (since $y$ and $z$ could have a common neighbor). By the \nameref{lem:key}, we obtain $80 > 114-33 = 81$, which is a contradiction.
		
		Next suppose that the vertex $v_4$ belongs to another bad 8-face $f''$ of $G$. 
		Let its vertices be $w_1$, $w_2$, $w_3$, $v_4$, $v_5$, $v_6$, $w_7$, $w_8$. 
		See Figure~\ref{fig:adj-bad-faces-2} for example.
		Since $f''$ is a bad 8-face, $w_1$ and $w_2$ are 2-vertices, and $v_4$ and $w_3$ are contributing vertices of $f''$.
		
		\begin{figure}[h!!]
			\centering
			
			\begin{tikzpicture}
				[
				vert/.style={circle,fill=black,draw=black, inner sep=0.05cm},
				s/.style={fill=black!15!white, draw=black!15!white, rounded corners},
				outvert/.style={rectangle,draw=black},
				outedge/.style={line width=1.5pt},
				dom_vert/.style={circle,draw=black,fill=white, inner sep=0.05cm}
				] 
				
				\draw[pattern=dots] (0,1) rectangle (3,0);
				\node at (1.5,0.5) {\small{bad $8$-face $f$}};
				\node at (1,-0.8) {\small{bad}}; 
				\node at (1,-1.3) {\small{$8$-face $f''$}};
				
				\filldraw[s] (-0.2,-0.2) rectangle (3.2,0.2) {};
				\filldraw[s] (1.8,-1.2) rectangle (2.2,0.2){};
				
				\filldraw[s] (0.8,0.8) rectangle (3.2,1.2) {};
				
				\filldraw[s] (-0.2,-1.2) rectangle (0.2,1.2){};
				
				\filldraw[s] (-0.2,-2.2) rectangle (2.2,-1.8) {};
				
				\node[dom_vert, label={180:$v_4$}] (v4) at (0,0) {};
				\node[vert, label={-90:$v_5$}] (v5) at (1,0) {};
				\node[dom_vert, label={-45:$v_6$}] (v6) at (2,0) {};
				\node[vert, label={-90:$v_7$}] (v7) at (3,0) {};
				\node[vert, label={90:$v_3$}] (v3) at (0,1) {};
				\node[vert, label={90:$v_2$}] (v2) at (1,1) {};
				\node[dom_vert, label={90:$v_1$}] (v1) at (2,1) {};
				\node[vert, label={90:$v_8$}] (v8) at (3,1) {};
				\node[vert, label={-135:$w_3$}] (w3) at (0,-1) {};
				\node[vert, label={-90:$w_2$}] (w2) at (0,-2) {};
				\node[dom_vert, label={-90:$w_1$}] (w1) at (1,-2) {};
				\node[vert, label={-45:$w_7$}] (w7) at (2,-1) {};
				\node[vert, label={-90:$w_8$}] (w8) at (2,-2) {};

				\node[outvert] (v31) at (-1.5,1) {$3$};
				\node[outvert] (w31) at (-1.5,-1) {$3$};
				\node[outvert] (v71) at (4.5,0) {$2^+$};
				\node[outvert] (v81) at (4.5,1) {$3$};
				\node[outvert] (w71) at (3.5,-1) {$2^+$};
				\node[outvert] (w81) at (3.5,-2) {$2^+$};
				
				\node at (-0.5,0.5) {$S$};
				\draw (v4) -- (v5) -- (v6) -- (v7) -- (v8) -- (v1) -- (v2) -- (v3) -- (v4);
				\draw (v4) -- (w3) -- (w2) -- (w1) -- (w8) -- (w7) -- (v6);
				
				\draw[outedge] (v3)-- (v31);
				\draw[outedge] (v7)-- (v71);
				\draw[outedge] (w3)-- (w31);
				\draw[outedge] (w7)-- (w71);
				\draw[outedge] (w8)-- (w81);
				\draw[outedge] (v8)-- (v81);
				
			\end{tikzpicture}
			
			\caption{Two bad 8-faces have vertex $v_4$ in common.}
			\label{fig:adj-bad-faces-2}
		\end{figure}
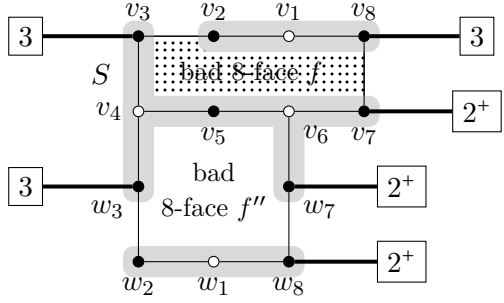
		
		Take $S = \{ v_1,\ldots, v_8, w_1, w_2, w_3, w_7, w_8 \} = N_G[v_1,v_4,v_6,w_1]$. Clearly there are exactly six edges between $S$ and $G - S$. We have $\gamma(G[S]) = 4$, $w_G(S) = 5 \cdot 11  + 8 \cdot 7 = 111$, and $c_G(S) \leq 6 \cdot 4 = 24$. By the \nameref{lem:key}, we obtain $80 > 111-24 = 87$, which is a contradiction.
	\end{proof}

	\begin{lemma}
		\label{lem:good-11+}
		Let $k \geq 11$ and let $f$ be a $k$-face. If a facial walk of $f$ contains $x$ 2-vertices and $f$ is incident with $y$ bad 8-faces, then $x+y \leq \left \lfloor \frac{k}{2} \right \rfloor$.
	\end{lemma}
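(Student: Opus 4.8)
The plan is to read the facial walk of $f$ as a cyclic sequence of vertices, each of which is a $2$- or $3$-vertex by Lemma~\ref{lem:no-deg-1}. By Lemmas~\ref{lem:2-2-2} and~\ref{lem:2-3-2} this sequence decomposes cyclically into alternating maximal blocks: every maximal run of $2$-vertices has length $1$ or $2$ (no $(2,2,2)$), and every maximal run of $3$-vertices separating two $2$-blocks has length at least $2$ (no $(2,3,2)$). Writing $m$ for the common number of $2$-blocks and $3$-blocks, with $2$-block lengths $a_1,\dots,a_m\in\{1,2\}$ and $3$-block lengths $b_1,\dots,b_m\ge 2$, I then have $x=\sum_i a_i$, $\sum_i b_i = k-x$, and $x\le 2m$. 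The degenerate case $m=0$ (no $2$-vertices) is handled separately at the end.

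Next I would record two facts about contributing pairs. Each bad $8$-face incident with $f$ contributes a pair of consecutive $3$-vertices on $f$ (its contributing vertices $v_3,v_4$), which therefore lies inside a single $3$-block. By Lemma~\ref{lem:good-3333} (equivalently, already by Lemma~\ref{lem:2-3-2}) each of $v_3,v_4$ has a $3$-vertex as its $f$-neighbor on the side away from the pair; hence $v_3$ and $v_4$ are never endpoints of their $3$-block, so each contributing pair occupies positions in the range $\{2,\dots,b-1\}$ of a block of length $b$. The second fact is that the contributing pairs of distinct incident bad $8$-faces are vertex-disjoint: if two incident bad $8$-faces $g_1,g_2$ shared a contributing vertex $p$, then their contributing edges $pq_1,pq_2$ would both lie on $f$ and would be the two $f$-edges at $p$, so, reading the rotation at $p$, both $g_1$ and $g_2$ would also contain the third edge at $p$; then $g_1$ and $g_2$ are adjacent while sharing a contributing vertex, contradicting Lemma~\ref{lem:adj-bad-faces}.

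With these two facts the count is immediate. In a $3$-block of length $b$, the incident bad $8$-faces contribute vertex-disjoint pairs of consecutive vertices, all contained in the $b-2$ interior positions; hence such a block hosts at most $\lfloor (b-2)/2\rfloor \le (b-2)/2$ contributing pairs. Summing over the $m$ blocks,
\[ y \;\le\; \sum_{i=1}^m \frac{b_i-2}{2} \;=\; \frac{1}{2}\Bigl(\sum_i b_i\Bigr) - m \;=\; \frac{k-x}{2}-m \;\le\; \frac{k-x}{2}-\frac{x}{2} \;=\; \frac{k}{2}-x, \]
where the last inequality uses $m\ge x/2$. Thus $x+y\le k/2$, and since $x+y$ is an integer this yields $x+y\le\lfloor k/2\rfloor$. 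If instead $m=0$, then $x=0$ and the vertex-disjointness of the contributing pairs gives $2y\le k$, so again $y\le\lfloor k/2\rfloor$.

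I expect the main obstacle to be the vertex-disjointness claim: this is the step where planarity genuinely enters (through the rotation-at-$p$ argument), and it must be combined carefully with Lemma~\ref{lem:adj-bad-faces}. A secondary technical point is that, exactly as in Lemma~\ref{lem:11+face}, the facial walk of $f$ need not be a cycle when $G$ is not $2$-connected, so the block decomposition and all of the counts above should be interpreted on the walk with multiplicities (each occurrence of a vertex or edge treated as a separate position) rather than on the underlying cycle.
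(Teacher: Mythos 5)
Your proof is correct, and it reaches the bound by a differently organized count than the paper's. The paper partitions the facial walk into multisets $A$ (2-vertices), $B$ (contributing 3-vertices), and $C$ (remaining 3-vertices), bounds $y \le |B|/2$ using the disjointness of contributing pairs, and constructs an injection from $A$ into $C$ (each 2-vertex maps to a facial 3-neighbor, which is non-contributing because both facial neighbors on $f$ of a contributing vertex are 3-vertices; injectivity is exactly the absence of $(2,3,2)$), giving $x+y \le |A| + |B|/2 \le (|A|+|B|+|C|)/2 = k/2$. Your block decomposition encodes the same structural facts locally: the $2m$ endpoints of the 3-blocks are precisely the image of the paper's injection (so $m \ge x/2$ is $|C| \ge |A|$ in disguise), and your interior-position count $y \le \sum_i \lfloor (b_i-2)/2 \rfloor$ refines $y \le |B|/2$; neither version is sharper where it matters, so this is mainly a difference of presentation. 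The one place where you genuinely go beyond the written proof is the vertex-disjointness of contributing pairs of distinct bad 8-faces incident with $f$: the paper uses this implicitly when writing $y \le |B|/2$, but Lemma~\ref{lem:adj-bad-faces} only speaks of \emph{adjacent} bad 8-faces, and your rotation argument at a shared contributing vertex $p$ --- showing both bad faces must contain the third edge at $p$ and hence be adjacent --- is exactly the reduction needed to invoke that lemma. Your closing caveat about reading all counts on the facial walk with multiplicities matches the paper's use of multisets and is the right level of care here.
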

	
	\begin{proof}
		Observe that vertices incident with $f$ are 2-vertices, contributing 3-vertices, or non-contributing $3$-vertices. Let $A$ be the multiset of 2-vertices on $f$, $B$ be the multiset of contributing 3-vertices on $f$, and $C$ be the multiset of the remaining 3-vertices of $f$. Note that $A$, $B$ and $C$ are multisets since a vertex is included multiple times if it appears more than once in the facial walk of $f$. Clearly, $|A| + |B| + |C| = k$. 
		
		Since each bad 8-face incident with $f$ has two contributing 3-vertices in common with $f$, $y \le \frac{|B|}{2}$. Clearly, $x = |A|$. By definition of a contributing 3-vertex and by Lemmas~\ref{lem:2-3-2} and~\ref{lem:adj-bad-faces}, the neighbors of contributing 3-vertices on $f$ are 3-vertices that are non-contributing. 
		Thus by Lemmas~\ref{lem:2-2-2} and~\ref{lem:2-3-2} again, there is an injection from $A$ to $C$, which maps each $v \in A$ to $w \in C \cap N_G(v)$, so $|A| \leq |C|$. 
		
		Hence, $$x + y \le |A| + \frac{|B|}{2} \leq \frac{|A| + |B| + |C|}{2} = \frac{k}{2},$$
		thus also $x + y \leq \left \lfloor \frac{k}{2} \right \rfloor$.
	\end{proof}
	
	Note that this bound holds for all $k$-faces, $k \geq 8$, but the obtained bound is not strong enough for our later use in the discharging process. Thus we consider cases $k \in \{8,9,10\}$ separately.
	
	\begin{lemma}
		\label{lem:good-8}
		If a contributing $8$-face $f$ is incident with $x$ $2$-vertices and $y$ bad $8$-faces, then $x+y \leq 2$.
	\end{lemma}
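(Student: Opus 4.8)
Since $G$ has no $1$-vertices (Lemma~\ref{lem:no-deg-1}) and girth at least $8$, the facial walk of $f$ is a cycle $f=v_1v_2\cdots v_8$. Being a contributing face, $f$ contains the contributing pair of some bad $8$-face $g$: two adjacent $3$-vertices whose remaining two neighbours on $f$ are, by Lemma~\ref{lem:good-3333}, again $3$-vertices, one of which is \emph{strong} (all its neighbours have degree $3$). As the other neighbour on $f$ of a strong vertex is also a $3$-vertex, $f$ carries a block of at least five consecutive $3$-vertices, so at most three vertices of $f$ can be $2$-vertices.

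I would keep the bookkeeping of Lemma~\ref{lem:good-11+}: split $V(f)$ into the $2$-vertices $A$, the contributing $3$-vertices $B$, and the remaining $3$-vertices $C$, with $x=|A|$ and $y=\tfrac12|B|$ (distinct incident bad $8$-faces use vertex-disjoint contributing pairs, by Lemma~\ref{lem:adj-bad-faces} together with the fact that no vertex is a contributing vertex of two bad $8$-faces). Two local facts do most of the work: (i) no $2$-vertex of $f$ is adjacent on $f$ to a vertex of $B$, since a contributing vertex's neighbours on $f$ are $3$-vertices (Lemma~\ref{lem:good-3333}) and a contributing vertex already has a $2$-neighbour off $f$, so a second such neighbour would create a forbidden $(2,3,2)$ (Lemma~\ref{lem:2-3-2}); and (ii) each contributing pair is flanked by a strong $C$-vertex, which can be neither a $2$-vertex nor a contributing vertex.

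Assume $x+y\ge 3$; since $y\ge 1$, the cases are $y\in\{1,2,3\}$. Facts (i)--(ii), the ban on three consecutive $2$-vertices (Lemma~\ref{lem:2-2-2}), and the disjointness of the pairs already exclude several patterns outright (for instance, three pairs on an $8$-cycle with some pair having both flanking vertices in $B$, which contradicts (ii)). For the patterns that survive this purely local sieve I would invoke the \nameref{lem:key} on the set $S=V(f)\cup\bigcup_{h\in\mathcal{C}(f)}V(h)$, the union of $f$ with the bad $8$-faces it contributes to: here $w_G(S)$ is large because each incident bad face supplies three $2$-vertices of weight $11$, $\gamma(G[S])$ stays small because those $2$-vertices are cheap to dominate, and $c_G(S)$ is controlled through Lemma~\ref{lem:cost-bound} by counting the few edges leaving $S$, yielding the desired contradiction $20\gamma(G[S])\le w_G(S)-c_G(S)$. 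The residual and tightest situation is $y=1$, where the local facts leave open exactly $x=2$: two adjacent $2$-vertices on the side of $f$ opposite the strong vertex. To eliminate this I would again apply the \nameref{lem:key}, now on a set that, in addition to $V(f)\cup V(g)$, captures the face lying on the far side of the edge joining the two $2$-vertices (whose structure is partly forced by Lemmas~\ref{lem:2-3-2} and~\ref{lem:2-2-2}).

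The main obstacle is precisely this final elimination. The structural facts only bound $x+y$ by $3$, so the last unit must be won through the weight inequality, and the inequality is tight: the set $S$ must be chosen large enough that its weight (above all its $2$-vertices) beats the boundary, while simultaneously keeping $\gamma(G[S])$ and $c_G(S)$ small. Bounding $c_G(S)$ sharply is where planarity and $g(G)\ge 8$ re-enter, since together they limit how many edges leaving $S$ can meet a common vertex of $N_G(S)\setminus S$; assembling an $S$ for the $y=1,\,x=2$ configuration so that the margin $w_G(S)-c_G(S)-20\gamma(G[S])$ is nonnegative is the delicate heart of the argument, and I expect the case analysis to turn on choosing this $S$ correctly.
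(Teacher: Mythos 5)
Your outline follows the same route as the paper: the structural sieve (your facts (i) and (ii) are correct and correspond to the paper's use of Lemmas~\ref{lem:2-3-2}, \ref{lem:good-3333} and~\ref{lem:adj-bad-faces}), a case split on $(x,y)\in\{(2,\ge 1),(1,\ge 2),(0,\ge 3)\}$, and an application of Lemma~\ref{lem:key} to the configurations the sieve does not kill. The genuine gap is in the one place where you are concrete about the reduction: the set $S=V(f)\cup\bigcup_{h\in\mathcal{C}(f)}V(h)$ provably does \emph{not} yield a contradiction, and no bookkeeping can make it do so. Take the surviving configuration with $x=0$, $y=3$ (the paper's Case~3, Figure~\ref{fig:good-8-3}). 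Then $|S|=26$, with nine $2$-vertices and seventeen $3$-vertices, so $w_G(S)=9\cdot 11+17\cdot 7=218$. In the generic such configuration exactly eleven edges leave $S$ (one from each of the nine internal $3$-vertices of the bad faces, one from each of the two non-contributing vertices of $f$), and each contributes at least $4$, so $c_G(S)\ge 44$ and $w_G(S)-c_G(S)\le 174$. Moreover $\gamma(G[S])=9$: each bad $8$-face needs at least two dominators among its six internal vertices (only those can reach the middle four), the two non-contributing vertices of $f$ need dominators in two further disjoint sets, and a short exhaustive check shows that eight dominators never suffice. Hence $20\gamma(G[S])=180>174\ge w_G(S)-c_G(S)$, which is exactly what Lemma~\ref{lem:key} asserts -- no contradiction. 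The same computation defeats your $S$ in the surviving $x=1$, $y=2$ configuration ($w_G(S)=168$, nine leaving edges, $\gamma(G[S])=7$, so $140>132$).

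What the paper actually does -- and this is the substance of the proof, not a routine verification -- is to take $S$ to be a union of \emph{closed neighborhoods} of the intended dominators (for instance $S=N_G[v_2,v_5,v_8,u_2,u_5,w_2,w_5]$ in its Case~2), thereby pulling in pendant $3$-vertices that lie outside the faces; each pendant hanging off a dominator raises $w_G(S)-c_G(S)$ by roughly $3$ while leaving $\gamma(G[S])$ unchanged. Even with this augmentation every final inequality is exactly tight ($100>100$, $140>140$, $180>181$), and in the $x=2$ case the bound of Lemma~\ref{lem:cost-bound} is still not good enough: the paper must prove, via planarity and the girth condition, that $G-S$ has at most two isolated vertices (Claims~\ref{clm:girth8-case1-1} and~\ref{clm:girth8-case1-2}). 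Note also that your $y=1$ analysis leaves two residual configurations, not one: the adjacent $2$-vertices can sit at $v_5,v_6$ or at $v_6,v_7$ relative to the contributing pair (the paper's Cases~1a and~1b), and each needs its own tailored $S$ and its own isolated-vertex claim. So the skeleton is right, but what is missing is precisely the part that makes the lemma true.
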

	
	\begin{proof}
		Since $f$ is a contributing $8$-face, $x \leq 2$ by Lemma \ref{lem:8-face-weak}. Suppose that $x + y \geq 3$. We distinguish between the following cases.
		
		\begin{description}
			\item[Case 1.] $x = 2$ and $y \geq 1$.\\
			Let the vertices of $f$ be $v_1, \ldots, v_8$. Since $ y \geq 1$, we may assume that $v_1$ and $v_2$ are the contributing vertices that $f$ shares with a bad $8$-face $f'$. Let the vertices of $f'$ be $v_1, v_2, u_3, \ldots, u_8$, where $u_3, u_4, u_8$ are $2$-vertices; see Figure~\ref{fig:good-8-x2-y1}. 
			
			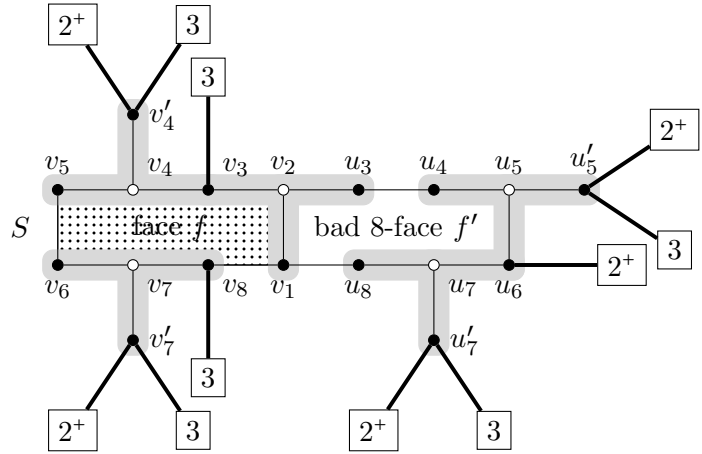
\begin{figure}[h!!]
				\centering
				
				\begin{tikzpicture}
					[
					vert/.style={circle,fill=black,draw=black, inner sep=0.05cm},
					s/.style={fill=black!15!white, draw=black!15!white, rounded corners},
					outvert/.style={rectangle,draw=black},
					outedge/.style={line width=1.5pt},
					dom_vert/.style={circle,draw=black,fill=white, inner sep=0.05cm}
					] 
					
					\draw[pattern=dots] (0,0) rectangle (-3,1) {};
					\filldraw[s] (0.8,-0.2) rectangle (3.2,0.2) {};
					\filldraw[s] (1.8,0.8) rectangle (4.2,1.2) {};
					
					\filldraw[s] (-3.2,-0.2) rectangle (-0.8,0.2){};
					\filldraw[s] (-3.2,0.8) rectangle (-0.8,1.2){};
					
					\filldraw[s] (-2.2,-1.2) rectangle (-1.8,0.2) {};
					\filldraw[s] (-2.2,0.8) rectangle (-1.8,2.2) {};
					
					\filldraw[s] (-1.2,0.8) rectangle (1.2,1.2) {};
					\filldraw[s] (-0.2,-0.2) rectangle (0.2,1.2) {};

					\filldraw[s] (1.8,0.2) rectangle (2.2,-1.2){};
					\filldraw[s] (2.8,-0.2) rectangle (3.2,1.2) {};

					\node[vert, label={-90:$v_1$}] (v1) at (0,0) {};
					\node[dom_vert, label={90:$v_2$}] (v2) at (0,1) {};
					\node[vert, label={45:$v_3$}] (v3) at (-1,1) {};
					\node[dom_vert, label={45:$v_4$}] (v4) at (-2,1) {};
					\node[vert, label={90:$v_5$}] (v5) at (-3,1) {};
					\node[vert, label={-90:$v_6$}] (v6) at (-3,0) {};
					\node[dom_vert, label={-45:$v_7$}] (v7) at (-2,0) {};
					\node[vert, label={-45:$v_8$}] (v8) at (-1,0) {};
					\node[vert, label={90:$u_3$}] (u3) at (1,1) {};
					\node[vert, label={90:$u_4$}] (u4) at (2,1) {};
					\node[dom_vert, label={90:$u_5$}] (u5) at (3,1) {};
					\node[vert, label={-90:$u_6$}] (u6) at (3,0) {};
					\node[dom_vert, label={-45:$u_7$}] (u7) at (2,0) {};
					\node[vert, label={-90:$u_8$}] (u8) at (1,0) {};
					\node[vert,label={0:$v_4'$}] (v4') at (-2,2) {};
					\node[vert,label={0:$v_7'$}] (v7') at (-2,-1) {};
					\node[vert,label={90:$u_5'$}] (u5') at (4,1) {};
					\node[vert,label={0:$u_7'$}] (u7') at (2,-1) {};
					
					\node[outvert] (v4'1) at (-2.8,3.2) {$2^+$};
					\node[outvert] (v4'2) at (-1.2,3.2) {$3$};
					\node[outvert] (v7'1) at (-2.8,-2.2) {$2^+$};
					\node[outvert] (v7'2) at (-1.2,-2.2) {$3$};
					\node[outvert] (v31) at (-1,2.5) {$3$};
					\node[outvert] (v81) at (-1,-1.5) {$3$};
					\node[outvert] (u61) at (4.5,0) {$2^+$};
					\node[outvert] (u7'1) at (1.2,-2.2) {$2^+$};
					\node[outvert] (u7'2) at (2.8,-2.2) {$3$};
					\node[outvert] (u5'1) at (5.2,1.8) {$2^+$};
					\node[outvert] (u5'2) at (5.2,0.2) {$3$};
					
					\node at (-3.5,0.5) {$S$};
					\node at (-1.5,0.5) {face $f$};
					\node at (1.5,0.5) {bad 8-face $f'$};
					
					\draw (v4) -- (v5) -- (v6) -- (v7) -- (v8) -- (v1) -- (v2) -- (v3) -- (v4);
					\draw (v2) -- (u3) -- (u4) -- (u5) -- (u6) -- (u7) -- (u8) -- (v1);
					\draw (v4) -- (v4');
					\draw (v7) -- (v7');
					\draw (u5) -- (u5');
					\draw (u7) -- (u7');

					\draw[outedge] (v4'1) -- (v4') -- (v4'2);
					\draw[outedge] (v7'1) -- (v7') -- (v7'2);
					\draw[outedge] (v3) -- (v31);
					\draw[outedge] (v8) -- (v81);
					\draw[outedge] (u6) -- (u61);
					\draw[outedge] (u5'1) -- (u5') -- (u5'2);
					\draw[outedge] (u7'1) -- (u7') -- (u7'2);
				\end{tikzpicture}
				
				\caption{The configuration from Lemma \ref{lem:good-8}, Case 1a.}
				\label{fig:good-8-x2-y1}
			\end{figure} 
			
			By Lemma \ref{lem:2-3-2}, $\deg_G(v_3) = \deg_G(v_8) = 3$. 
			By Lemma \ref{lem:6-path}, at most one of $v_4, v_5$ is a $2$-vertex. 
			By Lemma~\ref{lem:good-3333}, at most one of $v_4, v_7$ is a $2$-vertex.
			
			If $\deg_G(v_4) = 2$, then $\deg_G(v_5) =\deg_G(v_7)= 3$.
			By Lemma \ref{lem:2-3-2}, $\deg_G(v_6) = 3$.
			This is a contradiction that $x=2$.
			Thus $\deg_G(v_4) = 3$.
			
			\begin{description}
				\item[Case 1a.] $\deg_G(v_5) = 2$.\\
				Since $x=2$, exactly one of $v_6$ and $v_7$ is a $2$-vertex. By Lemma~\ref{lem:2-3-2}, $\deg_G(v_6) =2$ and $\deg_G(v_7) = 3$.
				Let the neighbor of a $3$-vertex $v_i$ that lies on neither $f$ nor $f'$ be denoted by $v_i'$, and similarly, the other neighbor of $u_i$ is $u_i'$. See Figure \ref{fig:good-8-x2-y1}.
				
				Let $S =\{v_1, \ldots,v_8, u_3,\ldots,u_8, v_4',v_7',u_5',u_7'\}= N_G[v_4, v_7, v_2, u_5, u_7]$. 
				\medskip
				
				\begin{claim}\label{clm:girth8-case1-1}
					$G-S$ has at most two isolated vertices.
				\end{claim}
				\begin{myproof}
					Suppose $\deg_{G-S}(w)=0$ for some $w \in V(G) \setminus S$.
					Since $G$ has girth at least $8$, $N_G(w)$ can contain at most one vertex from $\{v_3, v_4', v_7', v_8\}$, and at most one vertex from $\{u_5',u_6,u_7'\}$.
					Thus, $\deg_G(w)=2$.
					Also, since $G$ has girth at least $8$, the following are the only possibilities for $N_G(w)$.
					\begin{enumerate}
						\item[(1)] $N_G(w)=\{v_4',u_6\}$.
						\item[(2)] $N_G(w)=\{v_7',u_6\}$.
						\item[(3)] $N_G(w)=\{v_4',u_7'\}$.
						\item[(4)] $N_G(w)=\{v_7',u_7'\}$.
						\item[(5)] $N_G(w)=\{v_4',u_5'\}$.
						\item[(6)] $N_G(w)=\{v_7',u_5'\}$.
						\item[(7)] $N_G(w)=\{v_8,u_5'\}$.
					\end{enumerate}
					Suppose on the contrary that $\deg_{G-S}(w_1)=\deg_{G-S}(w_2)=\deg_{G-S}(w_3)=0$ for $w_1, w_2, w_3 \in V(G) \setminus S$.
					
					Suppose $w_1$ satisfies (1) so that $N_G(w_1)=\{v_4',u_6\}$.
					Since $\deg_G(u_6)=3$, $w_2$ and $w_3$ cannot satisfy (2).
					By Lemma~\ref{lem:2-3-2}, $w_2$ and $w_3$ cannot satisfy (3) and (5).
					By the planarity of $G$, $w_2$ and $w_3$ cannot satisfy (6) and (7).
					Thus, $w_2$ and $w_3$ both satisfy (4), which is a contradiction.
					By symmetry, none of $w_1, w_2, w_3$ satisfy (1).
					
					Suppose $w_1$ satisfies (2) so that $N_G(w_1)=\{v_7',u_6\}$.
					By the planarity of $G$, $w_2$ and $w_3$ cannot satisfy (3) and (6).
					By the girth condition of $G$, $w_2$ and $w_3$ cannot satisfy (4) and (7).
					Thus, $w_2$ and $w_3$ both satisfy (5), which is a contradiction.
					By symmetry, none of $w_1, w_2, w_3$ satisfy (2).
					
					Suppose $w_1$ satisfies (3) so that $N_G(w_1) = \{v_4',u_7'\}$.
					By Lemma~\ref{lem:2-3-2}, $w_2$ and $w_3$ cannot satisfy (4) and (5).
					By the planarity of $G$, $w_2$ and $w_3$ cannot satisfy (6) and (7).
					Thus, $w_2$ and $w_3$ satisfy none of the conditions (1) through (7), which is a contradiction.
					By symmetry, none of $w_1, w_2, w_3$ satisfy (3).
					
					Suppose $w_1$ satisfies (4) so that $N_G(w_1)=\{v_7',u_7'\}$.
					By Lemma~\ref{lem:2-3-2}, $w_2$ and $w_3$ cannot satisfy (6).
					By the planarity of $G$, $w_2$ and $w_3$ cannot satisfy (7).
					Thus, $w_2$ and $w_3$ both satisfy (5), which is a contradiction.
					By symmetry, none of $w_1, w_2, w_3$ satisfy (4).
					
					Suppose $w_1$ satisfies (5) so that $N_G(w_1)=\{v_4',u_5'\}$.
					By Lemma~\ref{lem:2-3-2}, $w_2$ and $w_3$ cannot satisfy (6) and (7).
					Thus, $w_2$ and $w_3$ satisfy none of the conditions (1) through (7), which is a contradiction.
					By symmetry, none of $w_1, w_2, w_3$ satisfy (5).
					
					Now, since $w_1, w_2, w_3$ cannot satisfy conditions (1) through (5), 
					all of $w_1, w_2, w_3$ satisfy (6) or (7), which is a contradiction.
				\end{myproof}
				
				By Claim~\ref{clm:girth8-case1-1}, $c_G(S) \leq 11 \cdot 4 + 2 = 46$.
				Since $w_G(S) \geq 5 \cdot 11  + 13 \cdot 7 = 146$ and $\gamma(G[S]) = 5$, by the~\nameref{lem:key}, $5 \cdot 20 = 100 > 146 - 46 = 100$, which is a contradiction.    
				
				\item[Case 1b.] $\deg_G(v_5) = 3$.\\
				Since $x=2$, $v_6, v_7$ are 2-vertices; see Figure \ref{fig:good-8-1b}.
				
				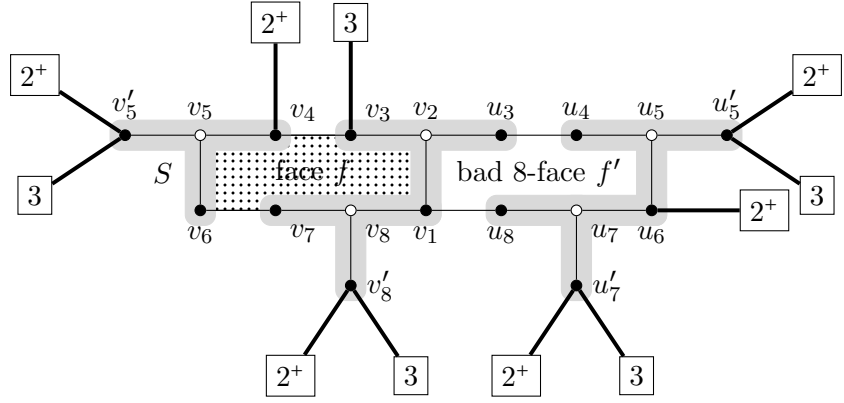
\begin{figure}[h!!]
					\centering
					
					\begin{tikzpicture}
						[
						vert/.style={circle,fill=black,draw=black, inner sep=0.05cm},
						s/.style={fill=black!15!white, draw=black!15!white, rounded corners},
						outvert/.style={rectangle,draw=black},
						outedge/.style={line width=1.5pt},
						dom_vert/.style={circle,draw=black,fill=white, inner sep=0.05cm}
						] 
						\draw[pattern=dots] (0,0) rectangle (-3,1) {};
						
						\filldraw[s] (0.8,-0.2) rectangle (3.2,0.2) {};
						\filldraw[s] (1.8,0.8) rectangle (4.2,1.2) {};
						
						\filldraw[s] (-2.2,-0.2) rectangle (0.2,0.2){};
						\filldraw[s] (-4.2,0.8) rectangle (-1.8,1.2){};
						
						\filldraw[s] (-1.2,-1.2) rectangle (-0.8,0.2) {};
						\filldraw[s] (-3.2,-0.2) rectangle (-2.8,1.2) {};
						
						\filldraw[s] (-1.2,0.8) rectangle (1.2,1.2) {};
						\filldraw[s] (-0.2,-0.2) rectangle (0.2,1.2) {};
						
						\filldraw[s] (1.8,0.2) rectangle (2.2,-1.2){};
						\filldraw[s] (2.8,-0.2) rectangle (3.2,1.2) {};
						
						\node[vert, label={-90:$v_1$}] (v1) at (0,0) {};
						\node[dom_vert, label={90:$v_2$}] (v2) at (0,1) {};
						\node[vert, label={45:$v_3$}] (v3) at (-1,1) {};
						\node[vert, label={45:$v_4$}] (v4) at (-2,1) {};
						\node[dom_vert, label={90:$v_5$}] (v5) at (-3,1) {};
						\node[vert, label={-90:$v_6$}] (v6) at (-3,0) {};
						\node[vert, label={-45:$v_7$}] (v7) at (-2,0) {};
						\node[dom_vert, label={-45:$v_8$}] (v8) at (-1,0) {};
						\node[vert, label={90:$u_3$}] (u3) at (1,1) {};
						\node[vert, label={90:$u_4$}] (u4) at (2,1) {};
						\node[dom_vert, label={90:$u_5$}] (u5) at (3,1) {};
						\node[vert, label={-90:$u_6$}] (u6) at (3,0) {};
						\node[dom_vert, label={-45:$u_7$}] (u7) at (2,0) {};
						\node[vert, label={-90:$u_8$}] (u8) at (1,0) {};
						\node[vert,label={90:$v_5'$}] (v5') at (-4,1) {};
						\node[vert,label={0:$v_8'$}] (v8') at (-1,-1) {};
						\node[vert,label={90:$u_5'$}] (u5') at (4,1) {};
						\node[vert,label={0:$u_7'$}] (u7') at (2,-1) {};
						
						\node[outvert] (v5'1) at (-5.2,1.8) {$2^+$};
						\node[outvert] (v5'2) at (-5.2,0.2) {$3$};
						\node[outvert] (v8'1) at (-1.8,-2.2) {$2^+$};
						\node[outvert] (v8'2) at (-0.2,-2.2) {$3$};
						\node[outvert] (v31) at (-1,2.5) {$3$};
						\node[outvert] (v41) at (-2,2.5) {$2^+$};
						\node[outvert] (u61) at (4.5,0) {$2^+$};
						\node[outvert] (u7'1) at (1.2,-2.2) {$2^+$};
						\node[outvert] (u7'2) at (2.8,-2.2) {$3$};
						\node[outvert] (u5'1) at (5.2,1.8) {$2^+$};
						\node[outvert] (u5'2) at (5.2,0.2) {$3$};
						
						\node at (-3.5,0.5) {$S$};
						\node at (-1.5,0.5) {face $f$};
						\node at (1.5,0.5) {bad 8-face $f'$};
						
						\draw (v4) -- (v5) -- (v6) -- (v7) -- (v8) -- (v1) -- (v2) -- (v3) -- (v4);
						\draw (v2) -- (u3) -- (u4) -- (u5) -- (u6) -- (u7) -- (u8) -- (v1);
						\draw (v5) -- (v5');
						\draw (v8) -- (v8');
						\draw (u5) -- (u5');
						\draw (u7) -- (u7');

						\draw[outedge] (v5'1) -- (v5') -- (v5'2);
						\draw[outedge] (v8'1) -- (v8') -- (v8'2);
						\draw[outedge] (v3) -- (v31);
						\draw[outedge] (v4) -- (v41);
						\draw[outedge] (u6) -- (u61);
						\draw[outedge] (u5'1) -- (u5') -- (u5'2);
						\draw[outedge] (u7'1) -- (u7') -- (u7'2);
					\end{tikzpicture}
					
					\caption{The configuration from Lemma \ref{lem:good-8}, Case 1b.}
					\label{fig:good-8-1b}
				\end{figure} 
				
				Let $S =\{v_1,\ldots,v_8,u_3,\ldots,u_8, v_5',v_8',u_5',u_7'\}= N_G[v_2, v_5, v_8, u_5, u_7]$.  
				\medskip
				
				\begin{claim}\label{clm:girth8-case1-2}
					$G-S$ has at most two isolated vertices.    
				\end{claim}
				\begin{myproof}
					Suppose $\deg_{G-S}(w)=0$ for some $w \in V(G) \setminus S$.
					Since $G$ has girth at least $8$, $N_G(w)$ can contain at most one vertex from $\{v_3,v_4, v_5', v_8'\}$, and at most one vertex from $\{u_5',u_6,u_7'\}$.
					Thus $\deg_G(w)=2$.
					Also, since $G$ has girth at least $8$, the following are the only possibilities for $N_G(w)$.
					\begin{itemize}
						\item[(1)] $N_G(w)=\{v_8',u_5'\}$.
						\item[(2)] $N_G(w)=\{v_4,u_5'\}$.
						\item[(3)] $N_G(w)=\{v_5',u_5'\}$.
						\item[(4)] $N_G(w)=\{v_4,u_7'\}$.
						\item[(5)] $N_G(w)=\{v_5',u_7'\}$.
						\item[(6)] $N_G(w)=\{v_4,u_6\}$.
						\item[(7)] $N_G(w)=\{v_5',u_6\}$.
					\end{itemize}
					Suppose on the contrary that $\deg_{G-S}(w_1) = \deg_{G-S}(w_2) = \deg_{G-S}(w_3)=0$ for $w_1, w_2, w_3 \in V(G) \setminus S$.
					
					Suppose $w_1$ satisfies (1) so that $N_G(w_1)=\{v_8',u_5'\}$.
					By Lemma~\ref{lem:2-3-2}, $w_2$ and $w_3$ cannot satisfy (2) and (3).
					By the planarity of $G$, $w_2$ and $w_3$ cannot satisfy (4), (5), (6), and (7).
					Thus, $w_2$ and $w_3$ satisfy none of the conditions (1) through (7), which is a contradiction.
					By symmetry, none of $w_1$, $w_2$, $w_3$ satisfy (1).
					
					Suppose $w_1$ satisfies (2) so that $N_G(w_1)=\{v_4,u_5'\}$.
					By Lemma~\ref{lem:2-3-2}, $w_2$ and $w_3$ cannot satisfy (3).
					Since $\deg_G(v_4)=3$, $w_2$ and $w_3$ cannot satisfy (4) and (6).
					Thus, $w_2$ and $w_3$ satisfy (5) and (7) each, which is a contradiction to Lemma~\ref{lem:2-3-2}.
					By symmetry, none of $w_1$, $w_2$, $w_3$ satisfy (2).
					
					Suppose $w_1$ satisfies (3) so that $N_G(w_1)=\{v_5',u_5'\}$.
					By Lemma~\ref{lem:2-3-2}, $w_2$ and $w_3$ cannot satisfy (5) and (7).
					By the planarity of $G$, $x_2$ and $w_3$ cannot satisfy (4) and (6).
					Thus, $w_2$ and $w_3$ satisfy none of the conditions (1) through (7), which is a contradiction.
					By symmetry, none of $w_1$, $w_2$, $w_3$ satisfy (3).
					
					Suppose $w_1$ satisfies (4) so that $N_G(w_1)=\{v_4,u_7'\}$.
					By Lemma~\ref{lem:2-3-2}, $w_2$ and $w_3$ cannot satisfy (5).
					Since $\deg_G(v_4)=3$, $w_2$ and $w_3$ cannot satisfy (6).
					Thus, $w_2$ and $w_3$ both satisfy (7), which is a contradiction.
					By symmetry, none of $w_1$, $w_2$, $w_3$ satisfy (4).
					
					Suppose $w_1$ satisfies (5) so that $N_G(w_1)=\{v_5',u_7'\}$.
					By Lemma~\ref{lem:2-3-2}, $w_2$ and $w_3$ cannot satisfy (7).
					Thus, $w_2$ and $w_3$ both satisfy (6), which is a contradiction.
					By symmetry, none of $w_1$, $w_2$, $w_3$ satisfy (5).
					
					Now, since $w_1$, $w_2$, and $w_3$ cannot satisfy conditions (1) through (5), 
					all of $w_1$, $w_2$ and $w_3$ satisfy (6) or (7), which is a contradiction.
				\end{myproof}
				By Claim~\ref{clm:girth8-case1-2}, $c_G(S) \le 11 \cdot 4 + 2 = 46$. Since $w_G(S) = 5 \cdot 11 + 13 \cdot 7 = 146$ and $\gamma(G[S]) = 5$, by the \nameref{lem:key}, $100> 146-46=100$, which is a contradiction.
				
			\end{description}
			
			\item[Case 2.] $x = 1$ and $y \geq 2$.\\
			Let the vertices of $f$ be $v_1, \ldots, v_8$. Let $\deg_G(v_1) = 2$. Since $x = 1$, all other vertices on $f$ are of degree $3$. By Lemma \ref{lem:good-3333}, there are only four possible edges that $f$ can have in common with a bad 8-face ($v_3 v_4$, $v_4 v_5$, $v_5 v_6$, $v_6 v_7$). Since bad 8-faces cannot share a contributing vertex by Lemma \ref{lem:adj-bad-faces}, there are only two different configurations to consider. However, if $v_3, v_4$ and $v_5, v_6$ are contributing vertices of $f$, then neither of $v_3, v_4$ has a neighbor on $f$ that is a 3-vertex with only degree 3 neighbors; see Figure \ref{fig:good-8-2a}. Since this contradicts Lemma \ref{lem:good-3333}, there is only one configuration left to consider.
			
			\begin{figure}[h!!]
				\centering
				
				\begin{tikzpicture}
					[
					vert/.style={circle,fill=black,draw=black, inner sep=0.05cm},
					s/.style={fill=black!15!white, draw=black!15!white, rounded corners},
					outvert/.style={rectangle,draw=black},
					outedge/.style={line width=1.5pt},
					dom_vert/.style={circle,draw=black,fill=white, inner sep=0.05cm}
					] 
					\draw[pattern=dots] (0,0) rectangle (2.5, 1)  {};
					
					\node[vert, label={-90:$v_1$}] (v1) at (1,1) {};
					\node[vert, label={-90:$v_2$}] (v2) at (0.5,1) {};
					\node[vert, label={90:$v_3$}] (v3) at (0,1) {};
					\node[vert, label={-90:$v_4$}] (v4) at (0,0) {};
					\node[vert, label={-90:$v_5$}] (v5) at (2.5,0) {};
					\node[vert, label={90:$v_6$}] (v6) at (2.5,1) {};
					\node[vert, label={-90:$v_7$}] (v7) at (2,1) {};
					\node[vert, label={-90:$v_8$}] (v8) at (1.5,1) {};
					
					\node (l1) at (1.25,0.3) {face $f$};
					\node[fill=white] (l2) at (-1.5,0.5) {bad 8-face};
					\node[fill=white] (l5) at (4,0.5) {bad 8-face};
					
					\foreach \x [remember=\x as \lastx (initially 1)] in {1,...,8,1}
					\path (v\x) edge (v\lastx);
					
					\node[vert] (u1) at (-1,1) {};
					\node[vert] (u2) at (-2,1) {};
					\node[vert] (u3) at (-3,1) {};
					\node[vert] (u4) at (-3,0) {};
					\node[vert] (u5) at (-2,0) {};
					\node[vert] (u6) at (-1,0) {};
					
					\foreach \x [remember=\x as \lastx (initially 1)] in {2,...,6}
					\path (u\x) edge (u\lastx);
					
					\draw (v3) -- (u1);
					\draw (v4) -- (u6);
					
					\node[vert] (u1') at (3.5,1) {};
					\node[vert] (u2') at (4.5,1) {};
					\node[vert] (u3') at (5.5,1) {};
					\node[vert] (u4') at (5.5,0) {};
					\node[vert] (u5') at (4.5,0) {};
					\node[vert] (u6') at (3.5,0) {};
					
					\foreach \x [remember=\x as \lastx (initially 1)] in {2,...,6}
					\path (u\x') edge (u\lastx');
					
					\draw (v6) -- (u1');
					\draw (v5) -- (u6');
					
					\draw (v2) -- (0.5, 2);
					\draw (v8) -- (1.5, 2);
					\draw (v7) -- (2, 2);
					\draw (u3) -- (-4,1);
					\draw (u4) -- (-4,0);
					\draw (u5) -- (-2,-1);
					\draw (u3') -- (6.5,1);
					\draw (u4') -- (6.5,0);
					\draw (u5') -- (4.5,-1);
					
				\end{tikzpicture}
				
				\caption{An example of the impossible configuration from Lemma \ref{lem:good-8}, Case 2.}
				\label{fig:good-8-2a}
			\end{figure}
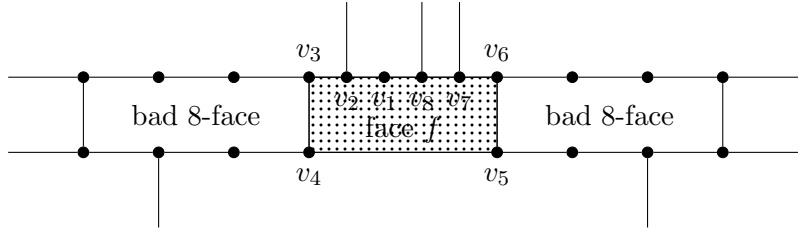  
			
			The remaining configuration is sketched in Figure \ref{fig:good-8-2b}. 
			
			\begin{figure}[h!!]
				\centering
				
				\begin{tikzpicture}
					[
					vert/.style={circle,fill=black,draw=black, inner sep=0.05cm},
					s/.style={fill=black!15!white, draw=black!15!white, rounded corners},
					outvert/.style={rectangle,draw=black},
					outedge/.style={line width=1.5pt},
					dom_vert/.style={circle,draw=black,fill=white, inner sep=0.05cm}
					]
					\draw[pattern=dots] (0,0) rectangle (2, 1)  {};
					
					\draw[s] (-0.2,-0.2) rectangle (2.2,0.2);
					\draw[s] (0.8,0.2) rectangle (1.2,-1.2);
					
					\draw[s] (-3.2,-0.2) rectangle (-0.8,0.2);
					\draw[s] (-2.2,0.2) rectangle (-1.8,-1.2);
					
					\draw[s] (2.8,-0.2) rectangle (5.2,0.2);
					\draw[s] (3.8,0.2) rectangle (4.2,-1.2);
					
					\draw[s] (-3.2,0.8) rectangle (-0.8,1.2);
					
					\draw[s] (2.8,0.8) rectangle (5.2,1.2);
					
					\draw[s] (-0.2,0.8) rectangle (2.2,1.2);
					\draw[s] (0.3,0.8) rectangle (0.7,2.2);
					\draw[s] (1.3,0.8) rectangle (1.7,2.2);
					
					\node (S) at (-2.5,-0.5) {$S$};
					
					\node[vert, label={-90:$v_1$}] (v1) at (1,1) {};
					\node[dom_vert, label={-90:$v_2$}] (v2) at (0.5,1) {};
					\node[vert, label={90:$v_3$}] (v3) at (0,1) {};
					\node[vert, label={-90:$v_4$}] (v4) at (0,0) {};
					\node[dom_vert, label={-45:$v_5$}] (v5) at (1,0) {};
					\node[vert, label={-90:$v_6$}] (v6) at (2,0) {};
					\node[vert, label={90:$v_7$}] (v7) at (2,1) {};
					\node[dom_vert, label={-90:$v_8$}] (v8) at (1.5,1) {};
					\node[vert] (v2s) at (0.5,2) {};
					\node[vert] (v8s) at (1.5,2) {};
					\node[vert] (v5s) at (1,-1) {};
					
					\node (l1) at (1,0.3) {face $f$};
					\node[fill=white] (l2) at (-1.5,0.5) {bad 8-face};
					\node[fill=white] (l5) at (3.5,0.5) {bad 8-face};
					
					\foreach \x [remember=\x as \lastx (initially 1)] in {1,...,8,1}
					\path (v\x) edge (v\lastx);
					
					\node[vert, label={90:$w_1$}] (u1) at (-1,1) {};
					\node[dom_vert, label={90:$w_2$}] (u2) at (-2,1) {};
					\node[vert, label={90:$w_3$}] (u3) at (-3,1) {};
					\node[vert, label={-90:$w_4$}] (u4) at (-3,0) {};
					\node[dom_vert, label={-45:$w_5$}] (u5) at (-2,0) {};
					\node[vert, label={-90:$w_6$}] (u6) at (-1,0) {};
					\node[vert] (u5s) at (-2,-1) {};
					
					\foreach \x [remember=\x as \lastx (initially 1)] in {2,...,6}
					\path (u\x) edge (u\lastx);
					
					\draw (v3) -- (u1);
					\draw (v4) -- (u6);
					
					\node[vert, label={90:$u_1$}] (u1') at (3,1) {};
					\node[dom_vert, label={90:$u_2$}] (u2') at (4,1) {};
					\node[vert, label={90:$u_3$}] (u3') at (5,1) {};
					\node[vert, label={-90:$u_4$}] (u4') at (5,0) {};
					\node[dom_vert, label={-45:$u_5$}] (u5') at (4,0) {};
					\node[vert, label={-90:$u_6$}] (u6') at (3,0) {};
					\node[vert] (u5's) at (4,-1) {};
					
					\foreach \x [remember=\x as \lastx (initially 1)] in {2,...,6}
					\path (u\x') edge (u\lastx');
					
					\draw (v7) -- (u1');
					\draw (v5) -- (u6');
					\draw (v2) -- (v2s);
					\draw (v8) -- (v8s);
					\draw (v5) -- (v5s);
					\draw (u5) -- (u5s);
					\draw (u5') -- (u5's);
					
					\node[outvert] (w31) at (-4.5,1) {$3$};
					\node[outvert] (w41) at (-4.5,0) {$2^+$};
					\node[outvert] (u31) at (6.5,1) {$3$};
					\node[outvert] (u41) at (6.5,0) {$2^+$};
					\node[outvert] (w51) at (-2.8,-2.2) {$3$};
					\node[outvert] (w52) at (-1.2,-2.2) {$2^+$};
					\node[outvert] (v51) at (0.2,-2.2) {$3$};
					\node[outvert] (v52) at (1.8,-2.2) {$2^+$};
					\node[outvert] (u51) at (3.2,-2.2) {$3$};
					\node[outvert] (u52) at (4.8,-2.2) {$2^+$};
					\node[outvert] (v21) at (-0.5,3.2) {$3$};
					\node[outvert] (v22) at (0.5,3.2) {$2^+$};
					\node[outvert] (v81) at (1.5,3.2) {$3$};
					\node[outvert] (v82) at (2.5,3.2) {$2^+$};
					
					\draw[outedge] (v21) -- (v2s) -- (v22);
					\draw[outedge] (v81) -- (v8s) -- (v82);
					\draw[outedge] (v51) -- (v5s) -- (v52);
					\draw[outedge] (w51) -- (u5s) -- (w52);
					\draw[outedge] (u51) -- (u5's) -- (u52);
					\draw[outedge] (u3) -- (w31);
					\draw[outedge] (u4) -- (w41);
					\draw[outedge] (u3') -- (u31);
					\draw[outedge] (u4') -- (u41);
					
				\end{tikzpicture}
				
				\caption{The remaining configuration from Lemma \ref{lem:good-8}, Case 2.}
				\label{fig:good-8-2b}
			\end{figure}
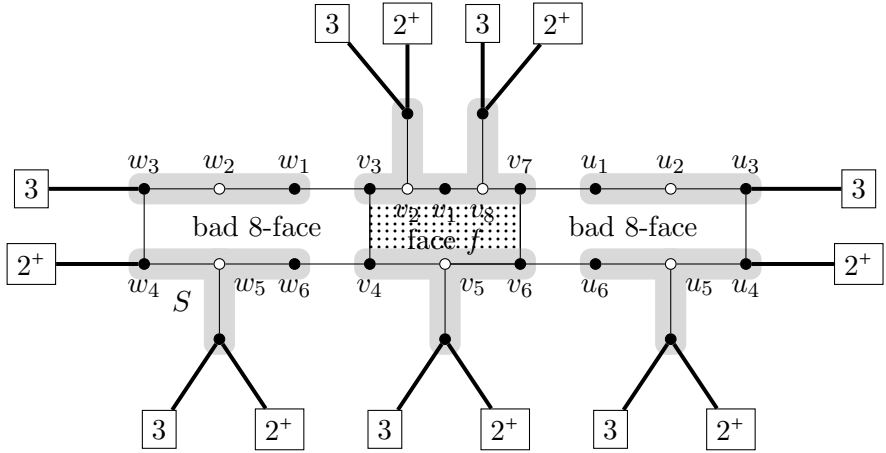 
			
			Let $S = N_G[v_2, v_5, v_8, u_2, u_5, w_2, w_5]$. Observe that there are seven 2-vertices and eighteen 3-vertices in $S$, and that the selection of $S$ is independent of the positions of 2-vertices on bad 8-faces. 
			Without loss of generality, let $\deg_G(w_5)=\deg_G(u_5)=3$. By Lemma~\ref{lem:2-3-2}, we see that the neighbors of $w_5$ or $u_5$ that are not on a bad face are of degree $3$.
			Note that $v_5$ and all of its neighbors are of degree 3  by Lemma \ref{lem:good-3333}. 
			Thus $\gamma(G[S]) = 7$ and $w_G(S) \geq 7 \cdot 11  + 18 \cdot 7 = 203$. There are 14 edges between $S$ and $G - S$. Thus, by Lemma~\ref{lem:cost-bound}, $c_G(S) \leq 14 \cdot 4 + \lfloor \frac{14}{2} \rfloor = 63$. The \nameref{lem:key} gives $7 \cdot 20 = 140 > 203 - 63 = 140$, which is a contradiction. 
			
			\item[Case 3.] $x = 0$ and $y \geq 3$.\\
			Let the vertices of $f$ be $v_1, \ldots, v_8$. Since $x = 0$, all vertices on $f$ are of degree $3$. Lemmas \ref{lem:good-3333} and \ref{lem:adj-bad-faces} imply that in fact $y \leq 3$, and there is only one possible configuration; see Figure \ref{fig:good-8-3}.
			
			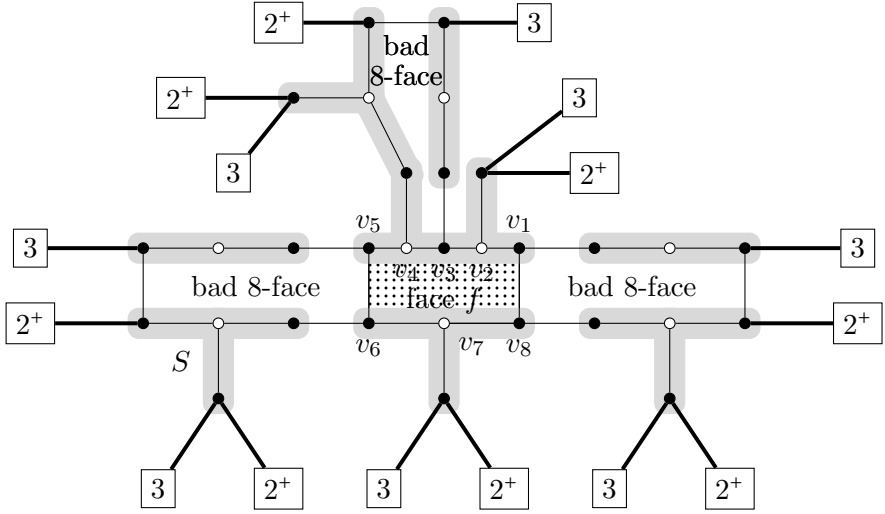
\begin{figure}[h!!]
				\centering
				
				\begin{tikzpicture}
					[
					vert/.style={circle,fill=black,draw=black, inner sep=0.05cm},
					s/.style={fill=black!15!white, draw=black!15!white, rounded corners},
					outvert/.style={rectangle,draw=black},
					outedge/.style={line width=1.5pt},
					dom_vert/.style={circle,draw=black,fill=white, inner sep=0.05cm}
					]
					\draw[pattern=dots] (0,0) rectangle (2, 1)  {};
					
					\draw[s] (-0.2,-0.2) rectangle (2.2,0.2);
					\draw[s] (0.8,0.2) rectangle (1.2,-1.2);
					
					\draw[s] (-3.2,-0.2) rectangle (-0.8,0.2);
					\draw[s] (-2.2,0.2) rectangle (-1.8,-1.2);
					
					\draw[s] (2.8,-0.2) rectangle (5.2,0.2);
					\draw[s] (3.8,0.2) rectangle (4.2,-1.2);
					
					\draw[s] (-3.2,0.8) rectangle (-0.8,1.2);
					
					\draw[s] (2.8,0.8) rectangle (5.2,1.2);
					
					\draw[s] (-0.2,0.8) rectangle (2.2,1.2);
					\draw[s] (0.3,0.8) rectangle (0.7,2.2);
					\draw[s] (1.3,0.8) rectangle (1.7,2.2);
					
					\draw[s] (0.8,1.8) rectangle (1.2,4.2);
					\draw[s] (-0.2,2.8) rectangle (0.2,4.2);
					\draw[s] (-1.2,2.8) rectangle (0.2,3.2);
					\draw[s, shift={(0.5,2)}, rotate=117] (-0.2,-0.2) rectangle (1.2,0.2);
					
					\node (S) at (-2.5,-0.5) {$S$};
					
					\node (l1) at (1,0.3) {face $f$};
					\node[fill=white] (l2) at (-1.5,0.5) {bad 8-face};
					\node[fill=white] (l5) at (3.5,0.5) {bad 8-face};
					\node (l81) at (0.5,3.7) {bad};
					\node (l82) at (0.5,3.3) {8-face};
					
					\node[vert, label={-90:$v_3$}] (v3) at (1,1) {};
					\node[dom_vert, label={-90:$v_4$}] (v4) at (0.5,1) {};
					\node[vert, label={90:$v_5$}] (v5) at (0,1) {};
					\node[vert, label={-90:$v_6$}] (v6) at (0,0) {};
					\node[dom_vert, label={-45:$v_7$}] (v7) at (1,0) {};
					\node[vert, label={-90:$v_8$}] (v8) at (2,0) {};
					\node[vert, label={90:$v_1$}] (v1) at (2,1) {};
					\node[dom_vert, label={-90:$v_2$}] (v2) at (1.5,1) {};
					\node[vert] (v8s) at (1.5,2) {};
					\node[vert] (v5s) at (1,-1) {};
					
					\node[vert] (x1) at (0.5,2) {};
					\node[dom_vert] (x2) at (0,3) {};
					\node[vert] (x3) at (0,4) {};
					\node[vert] (x4) at (1,4) {};
					\node[dom_vert] (x5) at (1,3) {};
					\node[vert] (x6) at (1,2) {};
					
					\node[vert] (x2s) at (-1,3) {};
					\draw (x2) -- (x2s);
					
					\draw (v4) -- (x1) -- (x2) -- (x3) -- (x4) -- (x5) -- (x6) -- (v3);
					
					\foreach \x [remember=\x as \lastx (initially 1)] in {1,...,8,1}
					\path (v\x) edge (v\lastx);
					
					\node[vert] (u1) at (-1,1) {};
					\node[dom_vert] (u2) at (-2,1) {};
					\node[vert] (u3) at (-3,1) {};
					\node[vert] (u4) at (-3,0) {};
					\node[dom_vert] (u5) at (-2,0) {};
					\node[vert] (u6) at (-1,0) {};
					\node[vert] (u5s) at (-2,-1) {};
					
					\foreach \x [remember=\x as \lastx (initially 1)] in {2,...,6}
					\path (u\x) edge (u\lastx);
					
					\draw (v5) -- (u1);
					\draw (v6) -- (u6);
					
					\node[vert] (u1') at (3,1) {};
					\node[dom_vert] (u2') at (4,1) {};
					\node[vert] (u3') at (5,1) {};
					\node[vert] (u4') at (5,0) {};
					\node[dom_vert] (u5') at (4,0) {};
					\node[vert] (u6') at (3,0) {};
					\node[vert] (u5's) at (4,-1) {};
					
					\foreach \x [remember=\x as \lastx (initially 1)] in {2,...,6}
					\path (u\x') edge (u\lastx');
					
					\draw (v1) -- (u1');
					\draw (v7) -- (u6');
					\draw (v2) -- (v8s);
					\draw (v7) -- (v5s);
					\draw (u5) -- (u5s);
					\draw (u5') -- (u5's);
					
					\node (l81) at (0.5,3.7) {bad};
					\node (l82) at (0.5,3.3) {8-face};
					
					\node[outvert] (w31) at (-4.5,1) {$3$};
					\node[outvert] (w41) at (-4.5,0) {$2^+$};
					\node[outvert] (u31) at (6.5,1) {$3$};
					\node[outvert] (u41) at (6.5,0) {$2^+$};
					\node[outvert] (w51) at (-2.8,-2.2) {$3$};
					\node[outvert] (w52) at (-1.2,-2.2) {$2^+$};
					\node[outvert] (v51) at (0.2,-2.2) {$3$};
					\node[outvert] (v52) at (1.8,-2.2) {$2^+$};
					\node[outvert] (u51) at (3.2,-2.2) {$3$};
					\node[outvert] (u52) at (4.8,-2.2) {$2^+$};
					\node[outvert] (v81) at (2.8,3) {$3$};
					\node[outvert] (v82) at (3,2) {$2^+$};
					
					\node[outvert] (vx21) at (-2.5,3) {$2^+$};
					\node[outvert] (vx22) at (-1.8,2) {$3$};
					\node[outvert] (vx3) at (-1.2,4) {$2^+$};
					\node[outvert] (vx4) at (2.2,4) {$3$};
					
					\draw[outedge] (v81) -- (v8s) -- (v82);
					\draw[outedge] (v51) -- (v5s) -- (v52);
					\draw[outedge] (w51) -- (u5s) -- (w52);
					\draw[outedge] (u51) -- (u5's) -- (u52);
					\draw[outedge] (u3) -- (w31);
					\draw[outedge] (u4) -- (w41);
					\draw[outedge] (u3') -- (u31);
					\draw[outedge] (u4') -- (u41);
					
					\draw[outedge] (vx21) -- (x2s) -- (vx22);
					\draw[outedge] (vx3) -- (x3);
					\draw[outedge] (vx4) -- (x4);
					
				\end{tikzpicture}
				
				\caption{The configuration from Lemma \ref{lem:good-8}, Case 3.}
				\label{fig:good-8-3}
			\end{figure}
			
			Let $S$ be as in the Figure \ref{fig:good-8-3}, and notice that it does not depend on the positions of the 2-vertices on the bad 8-faces. Thus $\gamma(G[S]) = 9$ and $w_G(S) \geq 9 \cdot 11  + 22 \cdot 7 = 253$. There are 16 edges between $S$ and $G - S$. Thus, by Lemma, \ref{lem:cost-bound} $c_G(S) \leq 16 \cdot 4 + \lfloor \frac{16}{2} \rfloor = 72$. The \nameref{lem:key} gives $9 \cdot 20 = 180 > 253 - 72 = 181$, which is a contradiction.
		\end{description}
		This concludes the proof of Lemma~\ref{lem:good-8}.
	\end{proof}
	
	\begin{lemma}
		\label{lem:good-9}
		If a 9-face $f$ is incident with $x$ 2-vertices and $y$ bad 8-faces, then $x+y \leq 3$.
	\end{lemma}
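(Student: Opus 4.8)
The plan is to follow the template of Lemma~\ref{lem:good-8}: bound $x+y$ from above, eliminate all but one extremal configuration using the structural lemmas, and finish the surviving case with the \nameref{lem:key}. Since $9 < 2g(G)$, the facial walk of $f$ is a $9$-cycle, so $x \le 3$ by Lemma~\ref{lem:9-face}. The counting argument of Lemma~\ref{lem:good-11+}, which is valid for every $k \ge 8$, already gives $x + y \le \lfloor 9/2 \rfloor = 4$; hence it remains to rule out $x+y = 4$, that is $(x,y) \in \{(3,1),(2,2),(1,3),(0,4)\}$. Two consequences of Lemma~\ref{lem:good-3333} drive the case analysis: (i) on $f$ a $2$-vertex is never adjacent to a contributing vertex, and (ii) for each incident bad $8$-face, one of the two $f$-neighbours of its contributing edge is an \emph{all-$3$ vertex}, i.e.\ a non-contributing $3$-vertex all of whose neighbours have degree $3$; in particular such a vertex has no $2$-vertex neighbour on $f$.

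I would eliminate three cases by counting all-$3$ vertices, using (i), (ii), Lemma~\ref{lem:adj-bad-faces}, and Lemmas~\ref{lem:2-2-2} and~\ref{lem:2-3-2}. In case $(0,4)$ the four disjoint contributing edges leave only $|C| = 1$ non-contributing vertex; it lies next to at most two contributing vertices, so it can be the all-$3$ neighbour demanded by (ii) for at most two of the four bad faces, a contradiction. In case $(1,3)$ the unique $2$-vertex has, by (i), both of its $f$-neighbours in $C$; as $|C| = 2$, these are exactly its two neighbours, so both are adjacent to a $2$-vertex and neither is all-$3$, leaving no all-$3$ vertex for the three incident bad faces. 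In case $(3,1)$ the single contributing edge cuts $f$ into a path on seven vertices whose two ends are $3$-vertices adjacent to that edge; a short check shows that every placement of the three $2$-vertices avoiding $(2,2,2)$ and $(2,3,2)$ makes the two path-neighbours of these ends $2$-vertices, so neither end is all-$3$, again contradicting (ii).

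This leaves $(2,2)$, where (i)--(ii) force (up to reflection) a unique shape of $f$: two contributing edges separated on one side by a single forced all-$3$ vertex and on the other by a $3$-vertex, two $2$-vertices, and a further $3$-vertex. Writing $g_1,g_2$ for the two incident bad $8$-faces, I would set $S = V(f)\cup V(g_1)\cup V(g_2)$; as in Lemma~\ref{lem:good-8}, this choice does not depend on where the $2$-vertices sit on $g_1,g_2$. A direct count gives $|S| = 21$ with eight $2$-vertices and thirteen $3$-vertices, so $w_G(S) = 8\cdot 11 + 13\cdot 7 = 179$, exactly $k = 9$ edges leave $S$, and an explicit dominating set of size $7$ shows $\gamma(G[S]) \le 7$. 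Using Table~\ref{tab:costs} one has $c_G(S) = 4k + I = 36 + I$, where $I$ counts the vertices of $G-S$ with all neighbours in $S$. As soon as $c_G(S) \le 39$, i.e.\ $I \le 3$, the \nameref{lem:key} gives the contradiction $140 = 20\cdot 7 \ge 20\gamma(G[S]) > w_G(S) - c_G(S) \ge 140$.

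The crux is precisely this refined bound $I \le 3$: Lemma~\ref{lem:cost-bound} yields only $c_G(S) \le 4\cdot 9 + \lfloor 9/2\rfloor = 40$, which is one too large. So, exactly as in the Claims inside Lemma~\ref{lem:good-8}, I would prove separately that $G-S$ has at most three isolated vertices. The nine edges leaving $S$ start at nine boundary vertices, three on $f$ and three on each of $g_1,g_2$; girth at least $8$ forbids two boundary vertices lying on the same face from sharing a neighbour outside $S$ (they are too close), so every newly isolated vertex must join two boundary vertices from different faces, and planarity caps the number of such crossings at three. Carrying out this girth-and-planarity analysis is the main obstacle; once it is in place, the remaining estimates are routine and the lemma follows.
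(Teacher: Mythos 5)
Your case decomposition and the three structural eliminations are sound and run parallel to the paper's own proof: your cases $(3,1)$, $(1,3)$, $(0,4)$ correspond to the paper's Cases 1, 3, 4, your facts (i)--(ii) are correct readings of Lemmas~\ref{lem:2-3-2} and~\ref{lem:good-3333}, and the configuration you isolate in case $(2,2)$ is exactly the one the paper arrives at. The genuine gap is in case $(2,2)$, and it sits precisely where you admit it does. With your choice $S=V(f)\cup V(g_1)\cup V(g_2)$ the whole contradiction hinges on beating Lemma~\ref{lem:cost-bound}'s bound $c_G(S)\le 40$ by a single unit, i.e.\ on showing that $G-S$ has at most three isolated vertices, and you never prove this: ``planarity caps the number of such crossings at three'' is an assertion, not an argument. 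This is exactly the kind of claim the paper spends two long multi-case Claims on inside Lemma~\ref{lem:good-8}, so leaving it as a sketch is a real omission, not a routine detail. (A secondary slip: ``exactly $k=9$ edges leave $S$'' is not justified, since girth $8$ permits a chord of $G[S]$ such as one joining a middle $3$-vertex of $g_1$ to $v_6$, creating an $8$-cycle; this is harmless because fewer leaving edges only lowers $c_G(S)$, but it should be stated as ``at most $9$.'')

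The missing claim is true, and the clean way to close your gap is Lemma~\ref{lem:2-3-2}, not planarity. Among the nine boundary vertices, the six that are adjacent to a $2$-vertex inside $S$ (namely $v_6,v_9$ on $f$ and the two outer $3$-vertices on each bad face) must have $3$-vertices as their outside neighbours by Lemma~\ref{lem:2-3-2}, and the all-$3$ vertex $v_3$ does as well; only the middle $3$-vertex of each bad $8$-face may have a $2$-vertex outside neighbour. Hence an isolated vertex of $G-S$ of degree $2$ must be a common neighbour of those two middle vertices, and since each boundary vertex has at most one leaving edge there is at most one such vertex; every other isolated vertex has degree $3$ and consumes three of the nine leaving edges (girth forbids two of its neighbours lying on the same face). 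So $I\ge 4$ would require at least $2+3\cdot 3=11>9$ leaving edges, giving $I\le 3$ and closing your computation. It is also worth seeing how the paper sidesteps the tightness entirely: in its Case 2 it enlarges $S$ by the five outside $3$-vertices guaranteed by Lemmas~\ref{lem:2-3-2} and~\ref{lem:good-3333} (the third neighbours of $v_3,v_6,v_9$ and of one $3$-vertex on each bad face), getting $w_G(S)\ge 214$ with $14$ leaving edges, so Lemma~\ref{lem:cost-bound} alone yields $c_G(S)\le 63$ and the \nameref{lem:key} gives $140>151$ with slack $11$. Your leaner $S$ is viable, but it buys that economy at the price of exactly the refined isolated-vertex analysis you left undone.
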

	
	\begin{proof}
		Since $f$ is a 9-face, $x \leq 3$ by Lemma \ref{lem:9-face}. Suppose that $x + y \geq 4$. We distinguish between the following cases. Let the vertices of $f$ be $v_1, \ldots, v_9$.
		
		\begin{description}
			\item[Case 1.] $x = 3$ and $y \geq 1$.\\
			Since $y \geq 1$, we may assume that $v_1, v_2$ are contributing vertices of $f$ that belong to a bad 8-face; see Figure \ref{fig:good-9-1}. By Lemma \ref{lem:good-3333}, both $v_3$ and $v_9$ are 3-vertices, and (without loss of generality) every neighbor of $v_3$ has degree $3$, so $\deg_G(v_4) = 3$. Now $f$ has at least three 2-vertices among $v_5, v_6, v_7, v_8$, which is not possible by Lemmas \ref{lem:2-2-2} and \ref{lem:2-3-2}.
			
			\begin{figure}[h!!]
				\centering
				\begin{tikzpicture}
					[scale=0.7,
					vert/.style={circle,fill=black,draw=black, inner sep=0.05cm},
					s/.style={fill=black!15!white, draw=black!15!white, rounded corners},
					s2/.style={fill=white, draw=black!50!white, rounded corners},
					outvert/.style={rectangle,draw=black},
					outedge/.style={line width=1.5pt},
					dom_vert/.style={circle,draw=black,fill=white, inner sep=0.05cm}
					] 
					\pgfmathtruncatemacro{\N}{9}
					\pgfmathtruncatemacro{\R}{2}
					\pgfmathtruncatemacro{\D}{3}
					
					\clip (-2.75,-2.75) rectangle (4.75, 2.5); 
					
					\begin{scope}[scale=0.8]
						\filldraw[s2, rotate=32, shift={(-2.5,-2.3)}] (0,0) rectangle (2,4.5){};
						
						\foreach \x in {1,...,\N}
						\node[vert] (\x) at (-50 + \x*360/\N:\R cm) {};
						\foreach \x [remember=\x as \lastx (initially 1)] in {1,...,\N,1}
						\path (\x) edge (\lastx);
						
						\draw \foreach \x in {1,...,\N} {(-50 + \x*360/\N:1.5) node {$v_{\x}$}};
						
						\foreach \y in {3,4,9}
						\draw (\y) -- (-50 + \y*360/\N:\D cm);
						
						\draw    (1) to[out=-20,in=40,distance=6cm] (2);
						\node (l) at (3.7,0.7) {bad 8-face};
						
					\end{scope}
				\end{tikzpicture}
				
				\caption{The impossible configuration from Case 1 of Lemma \ref{lem:good-9}.}
				\label{fig:good-9-1}
			\end{figure}
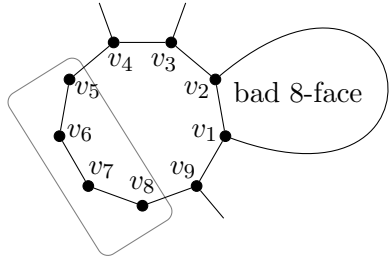  
			
			\item[Case 2.] $x = 2$ and $y \geq 2$.\\
			Since $y \geq 2$, $f$ is incident with at least two bad 8-faces. If $v_1, v_2$ and $v_3, v_4$ are contributing vertices, then by Lemma \ref{lem:good-3333}, $v_5, v_6, v_9, v_8$ are all 3-vertices. Thus $f$ can have at most one 2-vertex, $v_7$; see the left of Figure \ref{fig:good-9-2-not}. Similarly, if $v_1, v_2$ and $v_5, v_6$ are contributing vertices, then by Lemma \ref{lem:2-3-2}, $v_3, v_4, v_7, v_9$ are all 3-vertices, and $f$ can again have at most one 2-vertex, $v_8$; see the right of Figure \ref{fig:good-9-2-not}.
			
			\begin{figure}[h!!]
				\centering
				
				\begin{tikzpicture}
					[scale=0.7,
					vert/.style={circle,fill=black,draw=black, inner sep=0.05cm},
					s/.style={fill=black!15!white, draw=black!15!white, rounded corners},
					s2/.style={fill=white, draw=black!50!white, rounded corners},
					outvert/.style={rectangle,draw=black},
					outedge/.style={line width=1.5pt},
					dom_vert/.style={circle,draw=black,fill=white, inner sep=0.05cm}
					] 
					\pgfmathtruncatemacro{\N}{9}
					\pgfmathtruncatemacro{\R}{2}
					\pgfmathtruncatemacro{\D}{3}
					
					\clip (-3,-2.75) rectangle (15.25, 5); 
					
					\begin{scope}[scale=0.8]
						\filldraw[s2, rotate=0, shift={(-1.8,-2)}] (0,0) rectangle (1.1,1.1){};
						
						\foreach \x in {1,...,\N}
						\node[vert] (\x) at (-50 + \x*360/\N:\R cm) {};
						\foreach \x [remember=\x as \lastx (initially 1)] in {1,...,\N,1}
						\path (\x) edge (\lastx);
						
						\draw \foreach \x in {1,...,\N} {(-50 + \x*360/\N:1.5) node {$v_{\x}$}};
						
						\foreach \y in {5,9}
						\node[outvert] (o\y) at (-50 + \y*360/\N:3.5 cm) {$3$};
						
						\foreach \y in {5,9}
						\draw (\y) -- (o\y);
						
						\foreach \y in {6,8}
						\draw (\y) -- (-50 + \y*360/\N:\D cm);
						
						\draw    (1) to[out=-20,in=40,distance=6cm] (2);
						\node (l1) at (3.7,0.7) {bad 8-face};
						
						\draw    (3) to[out=65,in=125,distance=6cm] (4);
						\node (l2) at (0,4.5) {bad};
						\node (l3) at (0,3.8) {8-face};
						
					\end{scope}
					
					\begin{scope}[scale=0.8, xshift=13cm]
						\filldraw[s2, rotate=0, shift={(-0.5,-2.3)}] (0,0) rectangle (1.1,1.1){};
						
						\foreach \x in {1,...,\N}
						\node[vert] (\x) at (-50 + \x*360/\N:\R cm) {};
						\foreach \x [remember=\x as \lastx (initially 1)] in {1,...,\N,1}
						\path (\x) edge (\lastx);
						
						\draw \foreach \x in {1,...,\N} {(-50 + \x*360/\N:1.5) node {$v_{\x}$}};
						
						\foreach \y in {3,4,7,9}
						\draw (\y) -- (-50 + \y*360/\N:\D cm);
						
						\draw (1) to[out=-20,in=40,distance=6cm] (2);
						\node (l1) at (3.7,0.7) {bad 8-face};
						
						\draw (5) to[out=140,in=-160,distance=6cm] (6);
						\node (l2) at (-3.7,0.7) {bad 8-face};
						
					\end{scope}
				\end{tikzpicture}
				
				\caption{The impossible configurations from Case 2 of Lemma \ref{lem:good-9}.}
				\label{fig:good-9-2-not}
			\end{figure}
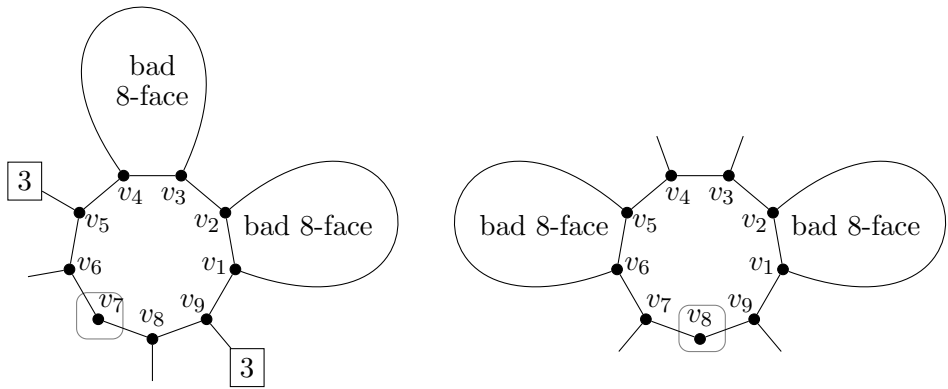
			
			By symmetry, the only remaining option is that $v_1, v_2$ and $v_4, v_5$ are contributing vertices. By Lemma \ref{lem:good-3333}, $v_3, v_6, v_9$ are 3-vertices. Since $x = 2$, it follows that $\deg_G(v_7) = \deg_G(v_8) = 2$. Thus by Lemma \ref{lem:2-3-2}, $v_6$ and $v_9$ have a neighbor of degree 3 outside $f$, and by Lemma \ref{lem:good-3333}, $v_3$ has a neighbor of degree 3 outside $f$. Let $S$ be as in Figure \ref{fig:good-9-2}. Notice that $S$ does not depend on the position of 2-vertices in the bad 8-faces. (Lemma \ref{lem:6-path} actually forces the position as in the figure, but we do not need this.)
			
			\begin{figure}[h!!]
				\centering
				
				\begin{tikzpicture}
					[
					vert/.style={circle,fill=black,draw=black, inner sep=0.05cm},
					s/.style={fill=black!15!white, draw=black!15!white, rounded corners},
					outvert/.style={rectangle,draw=black},
					outedge/.style={line width=1.5pt},
					dom_vert/.style={circle,draw=black,fill=white, inner sep=0.05cm}
					] 
					\draw[pattern=dots] (0,0) rectangle (2.5,1) {};
					
					\draw[s] (-0.2,-0.2) rectangle (2.7,0.2);
					\draw[s] (1.05,0.2) rectangle (1.45,-1.2);
					
					\draw[s] (-3.2,-0.2) rectangle (-0.8,0.2);
					
					\draw[s] (3.2,-0.2) rectangle (5.7,0.2);
					
					\draw[s] (-3.2,0.8) rectangle (-0.8,1.2);
					\draw[s] (-2.2,0.8) rectangle (-1.8,2.2);
					
					\draw[s] (3.2,0.8) rectangle (5.7,1.2);
					\draw[s] (4.3,0.8) rectangle (4.7,2.2);
					
					\draw[s] (-0.2,0.8) rectangle (1.2,1.2);
					\draw[s] (1.3,0.8) rectangle (2.7,1.2);
					\draw[s] (0.3,0.8) rectangle (0.7,2.2);
					\draw[s] (1.8,0.8) rectangle (2.2,2.2);
					
					\node (S) at (-2.5,-0.5) {$S$};
					
					\node[vert, label={-90:$v_8$}] (v1) at (1,1) {};
					\node[dom_vert, label={-90:$v_9$}] (v2) at (0.5,1) {};
					\node[vert, label={90:$v_1$}] (v3) at (0,1) {};
					\node[vert, label={-90:$v_2$}] (v4) at (0,0) {};
					\node[dom_vert, label={-45:$v_3$}] (v5) at (1.25,0) {};
					\node[vert, label={-90:$v_4$}] (v6) at (2.5,0) {};
					\node[vert, label={90:$v_5$}] (v7) at (2.5,1) {};
					\node[dom_vert, label={-90:$v_6$}] (v8) at (2,1) {};
					\node[vert, label={-90:$v_7$}] (v9) at (1.5,1) {};
					\node[vert] (v2s) at (0.5,2) {};
					\node[vert] (v8s) at (2,2) {};
					\node[vert] (v5s) at (1.25,-1) {};
					
					\node (l1) at (1.25,0.3) {face $f$};
					\node[fill=white] (l2) at (-1.5,0.5) {bad 8-face};
					\node[fill=white] (l5) at (4,0.5) {bad 8-face};
					
					\foreach \x [remember=\x as \lastx (initially 1)] in {1,...,9,1}
					\path (v\x) edge (v\lastx);
					
					\node[vert] (u1) at (-1,1) {};
					\node[dom_vert] (u2) at (-2,1) {};
					\node[vert] (u3) at (-3,1) {};
					\node[vert] (u4) at (-3,0) {};
					\node[dom_vert] (u5) at (-2,0) {};
					\node[vert] (u6) at (-1,0) {};
					\node[vert] (u5s) at (-2,2) {};
					
					\foreach \x [remember=\x as \lastx (initially 1)] in {2,...,6}
					\path (u\x) edge (u\lastx);
					
					\draw (v3) -- (u1);
					\draw (v4) -- (u6);
					
					\node[vert] (u1') at (3.5,1) {};
					\node[dom_vert] (u2') at (4.5,1) {};
					\node[vert] (u3') at (5.5,1) {};
					\node[vert] (u4') at (5.5,0) {};
					\node[dom_vert] (u5') at (4.5,0) {};
					\node[vert] (u6') at (3.5,0) {};
					\node[vert] (u5's) at (4.5,2) {};
					
					\foreach \x [remember=\x as \lastx (initially 1)] in {2,...,6}
					\path (u\x') edge (u\lastx');
					
					\draw (v7) -- (u1');
					\draw (v5) -- (u6');
					\draw (v2) -- (v2s);
					\draw (v8) -- (v8s);
					\draw (v5) -- (v5s);
					\draw (u2) -- (u5s);
					\draw (u2') -- (u5's);
					
					\node[outvert] (w31) at (-4.5,1) {$2^+$};
					\node[outvert] (w41) at (-4.5,0) {$3$};
					\node[outvert] (u31) at (7,1) {$2^+$};
					\node[outvert] (u41) at (7,0) {$3$};
					\node[outvert] (w51) at (-2.8,3.2) {$3$};
					\node[outvert] (w52) at (-1.2,3.2) {$2^+$};
					\node[outvert] (v51) at (0.45,-2.2) {$3$};
					\node[outvert] (v52) at (2.05,-2.2) {$2^+$};
					\node[outvert] (u51) at (3.7,3.2) {$3$};
					\node[outvert] (u52) at (5.3,3.2) {$2^+$};
					\node[outvert] (v21) at (-0.25,3.2) {$3$};
					\node[outvert] (v22) at (0.75,3.2) {$2^+$};
					\node[outvert] (v81) at (2,3.2) {$3$};
					\node[outvert] (v82) at (3,3.2) {$2^+$};
					
					\draw[outedge] (v21) -- (v2s) -- (v22);
					\draw[outedge] (v81) -- (v8s) -- (v82);
					\draw[outedge] (v51) -- (v5s) -- (v52);
					\draw[outedge] (w51) -- (u5s) -- (w52);
					\draw[outedge] (u51) -- (u5's) -- (u52);
					\draw[outedge] (u3) -- (w31);
					\draw[outedge] (u4) -- (w41);
					\draw[outedge] (u3') -- (u31);
					\draw[outedge] (u4') -- (u41);
					
				\end{tikzpicture}
				
				\caption{The configuration from Lemma \ref{lem:good-9}, Case 3.}
				\label{fig:good-9-2}
			\end{figure}
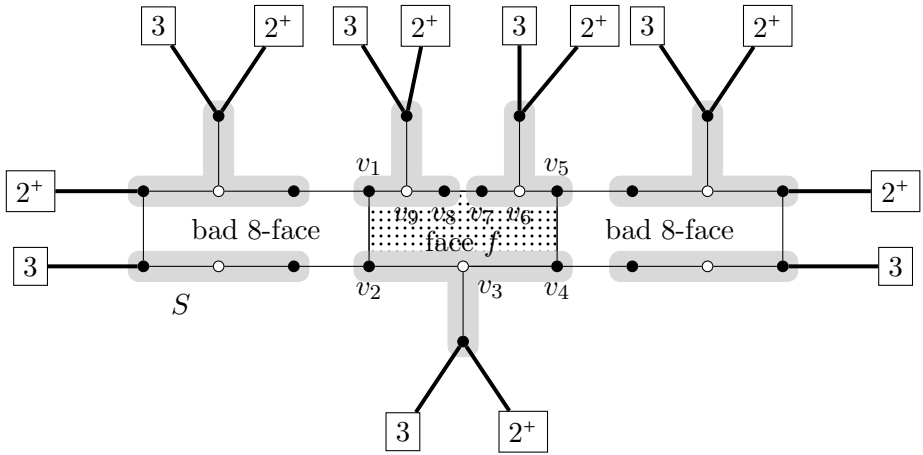
			
			Clearly, $\gamma(G[S]) = 7$ and $w_G(S) \geq 8 \cdot 11  + 18 \cdot 7 = 214$. There are 14 edges between $S$ and $V(G) \setminus S$. 
			Thus, by Lemma \ref{lem:cost-bound}, $c_G(S) \leq 14 \cdot 4 + \lfloor \frac{14}{2} \rfloor = 63$. The \nameref{lem:key} gives $7 \cdot 20 = 140 > 214 - 63 = 151$, which is a contradiction. 
			
			\item[Case 3.] $x = 1$ and $y \geq 3$.\\
			Since $x=1$, we may assume that $\deg_G(v_1) = 2$, and this is the only 2-vertex on $f$. By Lemma \ref{lem:2-3-2}, $v_2$ and $v_9$ are adjacent to a 3-vertex outside $f$. Thus only pairs $\{v_3, v_4\}$, $\{v_4, v_5\}$, $\{v_5, v_6\}$, $\{v_6, v_7\}$, and $\{v_7, v_8\}$ can form a pair of contributing vertices of $f$. But by Lemma \ref{lem:adj-bad-faces} and since $y \geq 3$, the unique option is that $\{v_3, v_4\}$, $\{v_5, v_6\}$, and $\{v_7, v_8\}$ are pairs of contributing vertices. But this gives a contradiction with Lemma \ref{lem:good-3333} since for example $v_2$ and $v_5$ are both adjacent to some 2-vertex; see Figure \ref{fig:good-9-3}.
			
			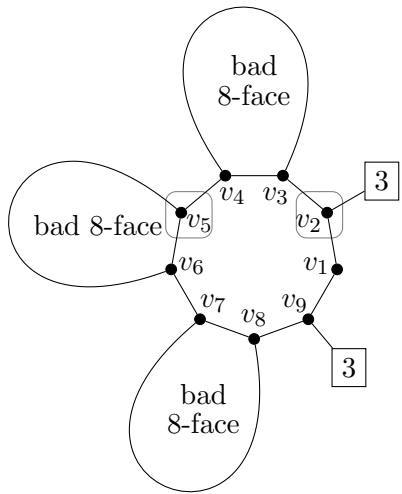
\begin{figure}[h!!]
				\centering
				
				\begin{tikzpicture}
					[scale=0.7,
					vert/.style={circle,fill=black,draw=black, inner sep=0.05cm},
					s/.style={fill=black!15!white, draw=black!15!white, rounded corners},
					s2/.style={fill=white, draw=black!50!white, rounded corners},
					outvert/.style={rectangle,draw=black},
					outedge/.style={line width=1.5pt},
					dom_vert/.style={circle,draw=black,fill=white, inner sep=0.05cm}
					] 
					\pgfmathtruncatemacro{\N}{9}
					\pgfmathtruncatemacro{\R}{2}
					\pgfmathtruncatemacro{\D}{3}
					
					\clip (-4.75,-5) rectangle (3, 5); 
					
					\begin{scope}[scale=0.8]
						\filldraw[s2, rotate=0, shift={(-2.1,0.4)}] (0,0) rectangle (1.1,1.1){};
						
						\filldraw[s2, rotate=0, shift={(1,0.4)}] (0,0) rectangle (1.1,1.1){};
						
						\foreach \x in {1,...,\N}
						\node[vert] (\x) at (-50 + \x*360/\N:\R cm) {};
						\foreach \x [remember=\x as \lastx (initially 1)] in {1,...,\N,1}
						\path (\x) edge (\lastx);
						
						\draw \foreach \x in {1,...,\N} {(-50 + \x*360/\N:1.5) node {$v_{\x}$}};
						
						\foreach \y in {2,9}
						\node[outvert] (o\y) at (-50 + \y*360/\N:3.5 cm) {$3$};
						
						\foreach \y in {2,9}
						\draw (\y) -- (o\y);

						\draw    (3) to[out=65,in=125,distance=6cm] (4);
						\node (l1) at (0,4.5) {bad};
						\node (l11) at (0,3.8) {8-face};
						
						\draw (5) to[out=140,in=-160,distance=6cm] (6);
						\node (l2) at (-3.7,0.7) {bad 8-face};
						
						\draw (7) to[out=-140,in=-80,distance=6cm] (8);
						\node (l3) at (-1.2,-3.3) {bad};
						\node (l4) at (-1.2,-4) {8-face};
						
					\end{scope}
					
				\end{tikzpicture}
				
				\caption{The impossible configuration from Case 3 of Lemma \ref{lem:good-9}.}
				\label{fig:good-9-3}
			\end{figure}
			
			\item[Case 4.] $x = 0$ and $y \geq 4$.\\
			Since $y \geq 4$ and Lemma \ref{lem:adj-bad-faces}, there is at least one pair of bad 8-faces such that there is only one edge of $f$ between the bad 8-faces. We may thus assume that $\{v_1, v_2\}$ and $\{v_3, v_4\}$ are pairs of contributing vertices. By Lemma \ref{lem:good-3333}, vertices $v_5$ and $v_9$ are adjacent to a 3-vertex outside of $f$; see Figure \ref{fig:good-9-4}. Thus only $\{v_6, v_7\}$ and $\{v_7, v_8\}$ remain as possible pairs of contributing vertices, but by Lemma \ref{lem:adj-bad-faces}, this is impossible. Thus $y \leq 3$, which is a contradiction.
			
			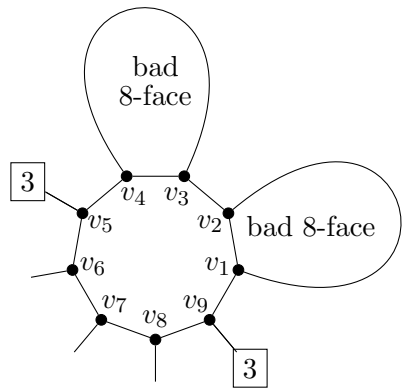
\begin{figure}[h!!]
				\centering
				
				\begin{tikzpicture}
					[scale=0.7,
					vert/.style={circle,fill=black,draw=black, inner sep=0.05cm},
					s/.style={fill=black!15!white, draw=black!15!white, rounded corners},
					s2/.style={fill=white, draw=black!50!white, rounded corners},
					outvert/.style={rectangle,draw=black},
					outedge/.style={line width=1.5pt},
					dom_vert/.style={circle,draw=black,fill=white, inner sep=0.05cm}
					] 
					\pgfmathtruncatemacro{\N}{9}
					\pgfmathtruncatemacro{\R}{2}
					\pgfmathtruncatemacro{\D}{3}
					
					\clip (-3,-2.75) rectangle (5, 5); 
					
					\begin{scope}[scale=0.8]
						\foreach \x in {1,...,\N}
						\node[vert] (\x) at (-50 + \x*360/\N:\R cm) {};
						\foreach \x [remember=\x as \lastx (initially 1)] in {1,...,\N,1}
						\path (\x) edge (\lastx);
						
						\draw \foreach \x in {1,...,\N} {(-50 + \x*360/\N:1.5) node {$v_{\x}$}};
						
						\foreach \y in {5,9}
						\node[outvert] (o\y) at (-50 + \y*360/\N:3.5 cm) {$3$};
						
						\foreach \y in {5,9}
						\draw (\y) -- (o\y);
						
						\foreach \y in {5,6,7,8,9}
						\draw (\y) -- (-50 + \y*360/\N:\D cm);

						\draw    (1) to[out=-20,in=40,distance=6cm] (2);
						\node (l1) at (3.7,0.7) {bad 8-face};
						
						\draw    (3) to[out=65,in=125,distance=6cm] (4);
						\node (l2) at (0,4.5) {bad};
						\node (l3) at (0,3.8) {8-face};
						
					\end{scope}
				\end{tikzpicture}
				
				\caption{The impossible configuration from Case 4 of Lemma \ref{lem:good-9}.}
				\label{fig:good-9-4}
			\end{figure}    
		\end{description}
		This concludes the proof of Lemma~\ref{lem:good-9}.
	\end{proof}
	
	\begin{lemma}
		\label{lem:good-10}
		If a 10-face $f$ is incident with $x$ 2-vertices and $y$ bad 8-faces, then $x+y \leq 4$.
	\end{lemma}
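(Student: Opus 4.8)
The plan is to assume $x+y \geq 5$ and derive a contradiction. Since $x \leq 4$ by Lemma~\ref{lem:10-face}, and the bound of Lemma~\ref{lem:good-11+} holds for $k=10$ as well (giving $x+y \leq 5$), we must have $x+y=5$, so that $(x,y) \in \{(4,1),(3,2),(2,3),(1,4),(0,5)\}$. I would use two structural facts throughout. First, by Lemma~\ref{lem:adj-bad-faces} the contributing edges of the $y$ bad $8$-faces incident with $f$ are pairwise vertex-disjoint, hence form a matching $M$ of size $y$ in the $10$-cycle bounding $f$. Second, the \emph{blocking principle}: if $v$ is a contributing vertex then $v$ is a $3$-vertex adjacent to a $2$-vertex on its bad face, so by Lemma~\ref{lem:2-3-2} both neighbours of $v$ on $f$ are $3$-vertices; consequently every $2$-vertex of $f$ is unmatched and has both of its $f$-neighbours unmatched. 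In particular a run of $s$ consecutive unmatched vertices (flanked by matched vertices) can contain at most $\max(s-2,0)$ $2$-vertices, since its two end vertices abut contributing vertices.

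I would dispose of the large-$y$ cases first. If $y=4$, then $M$ leaves only $10-8=2$ unmatched vertices, each abutting a contributing vertex, so by blocking none is a $2$-vertex and $x=0$, contradicting $x=1$. If $y=5$, then $M$ is a perfect matching and every bad $8$-face is \emph{sandwiched}: both of its outer neighbours on $f$ are contributing vertices of the two adjacent bad faces, hence each has a $2$-vertex neighbour, contradicting the second part of Lemma~\ref{lem:good-3333}. The same sandwich idea settles $y=3$: writing the three matching gaps as $s_1,s_2,s_3$ with $\sum s_i = 4$, the run bound forces the pattern $(4,0,0)$ as the only way to fit $x=2$ two-vertices, i.e.\ three consecutive contributing edges $v_1v_2,v_3v_4,v_5v_6$; but then the middle bad face on $v_3v_4$ is sandwiched between the other two, again violating Lemma~\ref{lem:good-3333}.

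For $y=2$ ($x=3$) the run bound forces the gap pattern to be $(6,0)$ or $(5,1)$, so the three $2$-vertices must all lie in the interior of a single long run flanked by $3$-vertices; a direct check shows every such placement creates a forbidden $(2,2,2)$ or $(2,3,2)$, excluded by Lemmas~\ref{lem:2-2-2} and~\ref{lem:2-3-2}. The delicate case, which I expect to be the main obstacle, is $y=1$ ($x=4$): a single contributing edge $v_1v_2$ forces $v_3,v_{10}$ to be $3$-vertices by blocking, leaving $v_4,\dots,v_9$ to carry all four $2$-vertices. Enumerating the $\binom{6}{2}$ choices of the two $3$-vertices among them and discarding those producing a $(2,2,2)$ or $(2,3,2)$ leaves the unique survivor $\deg_G(v_6)=\deg_G(v_7)=3$ with $v_4,v_5,v_8,v_9$ of degree $2$. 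In this configuration the outer neighbours of the contributing edge are $w_3=v_{10}$ and $w_4=v_3$, and each is adjacent to a $2$-vertex ($v_9$ and $v_4$ respectively), so neither is a $3$-vertex with only $3$-vertex neighbours, once more contradicting Lemma~\ref{lem:good-3333}. This exhausts all cases and yields $x+y \leq 4$. The only genuine work is the $y=1$ enumeration and the verification that its unique survivor violates Lemma~\ref{lem:good-3333}; every other case collapses via the matching/blocking/sandwich observations, and unlike Lemmas~\ref{lem:good-8} and~\ref{lem:good-9} I expect the Key Lemma to be unnecessary here.
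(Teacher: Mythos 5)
Your proposal is correct and follows essentially the same route as the paper: a case analysis over $(x,y)$ driven by the fact that contributing edges form a matching on $f$ (Lemma~\ref{lem:adj-bad-faces}), the observation that both $f$-neighbours of a contributing vertex are $3$-vertices (Lemma~\ref{lem:2-3-2}), and contradictions obtained from Lemmas~\ref{lem:2-2-2}, \ref{lem:2-3-2} and~\ref{lem:good-3333}, with your use of $x+y\le 5$ for $k=10$ justified by the paper's remark that the bound of Lemma~\ref{lem:good-11+} extends to all $k\ge 8$. The only real divergence is the case $(x,y)=(4,1)$, where the paper kills four $2$-vertices among the six consecutive vertices $v_4,\dots,v_9$ in one line via Lemma~\ref{lem:6-path}, whereas your enumeration of the $\binom{6}{2}$ placements followed by Lemma~\ref{lem:good-3333} is longer but equally valid.
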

	
	\begin{proof}
		Let the vertices of $f$ be $v_1, \ldots, v_{10}$. Since $f$ is a 10-face, $x \leq 4$ by Lemma \ref{lem:10-face}. Suppose that $x + y \geq 5$. We distinguish between the following cases.
		\begin{description}
			\item[Case 1.] $x = 4$ and $y \geq 1$.\\
			Since $y \geq 1$, we may assume that $\{v_1, v_2\}$ is a pair of contributing vertices. Thus, by Lemma \ref{lem:good-3333}, $v_3$ and $v_{10}$ are 3-vertices; see Figure \ref{fig:good-10-1}. Among the remaining six consecutive vertices of $f$, four must be 2-vertices (since $x = 4$). But this contradicts Lemma \ref{lem:6-path}.
			
			\begin{figure}[h!!]
				\centering
				
				\begin{tikzpicture}
					[scale=0.7,
					vert/.style={circle,fill=black,draw=black, inner sep=0.05cm},
					s/.style={fill=black!15!white, draw=black!15!white, rounded corners},
					s2/.style={fill=white, draw=black!50!white, rounded corners},
					outvert/.style={rectangle,draw=black},
					outedge/.style={line width=1.5pt},
					dom_vert/.style={circle,draw=black,fill=white, inner sep=0.05cm}
					] 
					\pgfmathtruncatemacro{\N}{10}
					\pgfmathtruncatemacro{\R}{2}
					\pgfmathtruncatemacro{\D}{3}
					\pgfmathtruncatemacro{\r}{35}
					
					\clip (-2.25,-2.25) rectangle (2.25, 3.5); 
					
					\begin{scope}[scale=0.8]
						\filldraw[s2] (-2.5,-2.5) rectangle (2.5,0.5){};
						
						\foreach \x in {1,...,\N}
						\node[vert] (\x) at (\r + \x*360/\N:\R cm) {};
						\foreach \x [remember=\x as \lastx (initially 1)] in {1,...,\N,1}
						\path (\x) edge (\lastx);
						
						\draw \foreach \x in {1,...,\N} {(\r + \x*360/\N:1.5) node {$v_{\x}$}};
						
						\foreach \y in {3,10}
						\draw (\y) -- (\r + \y*360/\N:\D cm);
						
						\draw    (1) to[out=72,in=108,distance=3cm] (2);
						\node (l) at (0,3) {bad};
						
					\end{scope}
				\end{tikzpicture}
				
				\caption{The contradictory configuration from Case 1 of Lemma \ref{lem:good-10}. To keep the figure clearer, we write ``bad'' instead of ``bad 8-face''.}
				\label{fig:good-10-1}
			\end{figure}
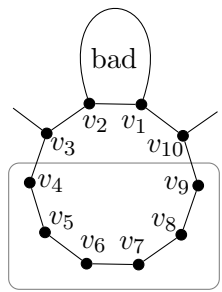
			
			\item[Case 2.] $x = 3$ and $y \geq 2$.\\
			Since $y \geq 2$, we may assume that $\{v_1, v_2\}$ is a pair of contributing vertices. Thus, by Lemma \ref{lem:good-3333}, $v_3$ and $v_{10}$ are 3-vertices, as is one of $v_4, v_9$. Without loss of generality, let $v_4$ be a 3-vertex; see Figure \ref{fig:good-10-2}. Since $x=3$, there are three 2-vertices among $v_5, \ldots, v_9$, and since $y \geq 2$, there is another bad 8-face incident with $f$. The only possible contributing pairs are $\{v_i, v_{i+1}\}$, $i \in \{4, \ldots, 9\}$.
			
			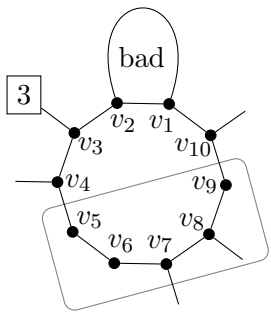
\begin{figure}[h!!]
				\centering
				
				\begin{tikzpicture}
					[scale=0.7,
					vert/.style={circle,fill=black,draw=black, inner sep=0.05cm},
					s/.style={fill=black!15!white, draw=black!15!white, rounded corners},
					s2/.style={fill=white, draw=black!50!white, rounded corners},
					outvert/.style={rectangle,draw=black},
					outedge/.style={line width=1.5pt},
					dom_vert/.style={circle,draw=black,fill=white, inner sep=0.05cm}
					] 
					\pgfmathtruncatemacro{\N}{10}
					\pgfmathtruncatemacro{\R}{2}
					\pgfmathtruncatemacro{\D}{3}
					\pgfmathtruncatemacro{\r}{35}
					
					\clip (-2.75,-2.5) rectangle (2.5, 3.5); 
					
					\begin{scope}[scale=0.8]
						\filldraw[s2, rotate=15] (-2.5,-2.5) rectangle (2.5,0){};
						
						\foreach \x in {1,...,\N}
						\node[vert] (\x) at (\r + \x*360/\N:\R cm) {};
						\foreach \x [remember=\x as \lastx (initially 1)] in {1,...,\N,1}
						\path (\x) edge (\lastx);
						
						\draw \foreach \x in {1,...,\N} {(\r + \x*360/\N:1.5) node {$v_{\x}$}};
						
						\foreach \y in {3}
						\node[outvert] (o\y) at (\r + \y*360/\N:3.5 cm) {$3$};
						
						\foreach \y in {3}
						\draw (\y) -- (o\y);
						
						\foreach \y in {4,7,8,10}
						\draw (\y) -- (\r + \y*360/\N:\D cm);
						
						\draw    (1) to[out=72,in=108,distance=3cm] (2);
						\node (l) at (0,3) {bad};
						
					\end{scope}
				\end{tikzpicture}
				
				\caption{The contradictory configuration from Case 2 of Lemma \ref{lem:good-10}.}
				\label{fig:good-10-2}
			\end{figure}
			
			It follows from Lemmas \ref{lem:2-2-2} and \ref{lem:2-3-2} that $(2,2,3,3,2)$ and $(2,3,3,2,2)$ are the only possible degree sequences of vertices $v_5, \ldots, v_9$. So $\deg_G(v_5) = \deg_G(v_9) = 2$, $\deg_G(v_7) = 3$, and one of $v_6, v_8$ is a 2-vertex and the other a 3-vertex. The following argument is analogous for both cases, so we assume that $\deg_G(v_6) = 2$. Thus, the only remaining possible pair of contributing vertices is $\{v_7, v_8\}$, but since both of these vertices have a 2-vertex as a neighbor, they cannot be contributing vertices. Thus $y = 1$, which is a contradiction.
			
			\item[Case 3.] $x = 2$ and $y \geq 3$.\\
			Since $x=2$, the two 2-vertices can either be adjacent or not. Note that by Lemma \ref{lem:2-3-2} in the second case, the 2-vertices are at distance at least 3.
			
			\begin{description}
				\item[Case 3.a.] The 2-vertices are adjacent.\\
				Without loss of generality, let $v_1$ and $v_2$ be the 2-vertices of $f$. By Lemma \ref{lem:2-3-2}, all neighbors of $v_3$ and $v_{10}$ are 3-vertices. Thus, there is only five remaining possible contributing pairs: $\{v_i, v_{i+1}\}$, $i \in \{ 4, \ldots, 8 \}$; see Figure \ref{fig:good-10-3a}. Since $y \geq 3$ and by Lemma \ref{lem:adj-bad-faces}, the position of the bad 8-faces must be on $\{v_4, v_5\}, \{v_6, v_7\}, \{v_8, v_9\}$. However, this gives a contradiction with Lemma \ref{lem:good-3333}, since for example, both neighbors of the pair $\{v_6, v_7\}$ on $f$ are adjacent to a 2-vertex.
				
				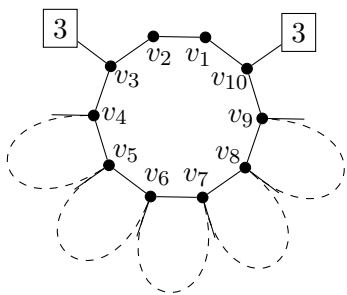
\begin{figure}[h!!]
					\centering
					
					\begin{tikzpicture}
						[scale=0.7,
						vert/.style={circle,fill=black,draw=black, inner sep=0.05cm},
						s/.style={fill=black!15!white, draw=black!15!white, rounded corners},
						s2/.style={fill=white, draw=black!50!white, rounded corners},
						outvert/.style={rectangle,draw=black},
						outedge/.style={line width=1.5pt},
						dom_vert/.style={circle,draw=black,fill=white, inner sep=0.05cm}
						] 
						\pgfmathtruncatemacro{\N}{10}
						\pgfmathtruncatemacro{\R}{2}
						\pgfmathtruncatemacro{\D}{3}
						\pgfmathtruncatemacro{\r}{35}
						
						\clip (-3.5,-3.5) rectangle (3.5, 2.25); 
						
						\begin{scope}[scale=0.8]
							
							\foreach \x in {1,...,\N}
							\node[vert] (\x) at (\r + \x*360/\N:\R cm) {};
							\foreach \x [remember=\x as \lastx (initially 1)] in {1,...,\N,1}
							\path (\x) edge (\lastx);
							
							\draw \foreach \x in {1,...,\N} {(\r + \x*360/\N:1.5) node {$v_{\x}$}};
							
							\foreach \y in {3,10}
							\node[outvert] (o\y) at (\r + \y*360/\N:3.5 cm) {$3$};
							
							\foreach \y in {3,10}
							\draw (\y) -- (o\y);
							
							\foreach \y in {4,5,6,7,8,9}
							\draw (\y) -- (\r + \y*360/\N:\D cm);
							
							\draw[dashed] (4) to[out=180,in=212,distance=3cm] (5);
							\draw[dashed] (5) to[out=212,in=248,distance=3cm] (6);
							\draw[dashed] (6) to[out=248,in=284,distance=3cm] (7);
							\draw[dashed] (7) to[out=284,in=320,distance=3cm] (8);
							\draw[dashed] (8) to[out=320,in=356,distance=3cm] (9);
							
						\end{scope}
					\end{tikzpicture}
					
					\caption{The contradictory configuration from Case 3.a of Lemma \ref{lem:good-10}.}
					\label{fig:good-10-3a}
				\end{figure}
				
				\item[Case 3.b.] The 2-vertices are not adjacent.\\
				Without loss of generality, let $v_1$ be a 2-vertex. By Lemma \ref{lem:2-3-2}, all other neighbors of $v_2$ and $v_{10}$ are 3-vertices. Thus, using symmetry, we only need to consider the cases when the other 2-vertex is one of $v_4, v_5, v_6$. 
				
				If $\deg_G(v_4)=2$, then by Lemma \ref{lem:2-3-2}, all other neighbors of $v_5$ are 3-vertices and thus, the only possible contributing pairs are $\{v_6, v_7\}, \{v_7, v_8\}, \{v_8, v_9\}$, so by Lemma \ref{lem:adj-bad-faces}, $y \leq 2$, which is a contradiction; see Figure \ref{fig:good-10-3b}(i). 
				
				\begin{figure}[h!!]
					\centering
					
					\begin{tikzpicture}
						[scale=0.7,
						vert/.style={circle,fill=black,draw=black, inner sep=0.05cm},
						s/.style={fill=black!15!white, draw=black!15!white, rounded corners},
						s2/.style={fill=white, draw=black!50!white, rounded corners},
						outvert/.style={rectangle,draw=black},
						outedge/.style={line width=1.5pt},
						dom_vert/.style={circle,draw=black,fill=white, inner sep=0.05cm}
						] 
						\pgfmathtruncatemacro{\N}{10}
						\pgfmathtruncatemacro{\R}{2}
						\pgfmathtruncatemacro{\D}{3}
						\pgfmathtruncatemacro{\r}{35}

						\begin{scope}[scale=0.8]
							
							\foreach \x in {1,...,\N}
							\node[vert] (\x) at (\r + \x*360/\N:\R cm) {};
							\foreach \x [remember=\x as \lastx (initially 1)] in {1,...,\N,1}
							\path (\x) edge (\lastx);
							
							\draw \foreach \x in {1,...,\N} {(\r + \x*360/\N:1.5) node {$v_{\x}$}};
							
							\foreach \y in {2,5,10}
							\node[outvert] (o\y) at (\r + \y*360/\N:3.5 cm) {$3$};
							
							\foreach \y in {2,5,10}
							\draw (\y) -- (o\y);
							
							\foreach \y in {3,6,7,8,9}
							\draw (\y) -- (\r + \y*360/\N:\D cm);
							
							\draw[dashed] (6) to[out=248,in=284,distance=3cm] (7);
							\draw[dashed] (7) to[out=284,in=320,distance=3cm] (8);
							\draw[dashed] (8) to[out=320,in=356,distance=3cm] (9);
							
							\node (i) at (0,-5) {(i)};
							
						\end{scope}
						
						\begin{scope}[scale=0.8, xshift=9cm]
							
							\foreach \x in {1,...,\N}
							\node[vert] (\x) at (\r + \x*360/\N:\R cm) {};
							\foreach \x [remember=\x as \lastx (initially 1)] in {1,...,\N,1}
							\path (\x) edge (\lastx);
							
							\draw \foreach \x in {1,...,\N} {(\r + \x*360/\N:1.5) node {$v_{\x}$}};
							
							\foreach \y in {2,4,6,10}
							\node[outvert] (o\y) at (\r + \y*360/\N:3.5 cm) {$3$};
							
							\foreach \y in {2,4,6,10}
							\draw (\y) -- (o\y);
							
							\foreach \y in {3,7,8,9}
							\draw (\y) -- (\r + \y*360/\N:\D cm);
							
							\draw[dashed] (7) to[out=284,in=320,distance=3cm] (8);
							\draw[dashed] (8) to[out=320,in=356,distance=3cm] (9);
							
							\node (i) at (0,-5) {(ii)};
							
						\end{scope}
						
						\begin{scope}[scale=0.8, xshift=18cm]
							
							\foreach \x in {1,...,\N}
							\node[vert] (\x) at (\r + \x*360/\N:\R cm) {};
							\foreach \x [remember=\x as \lastx (initially 1)] in {1,...,\N,1}
							\path (\x) edge (\lastx);
							
							\draw \foreach \x in {1,...,\N} {(\r + \x*360/\N:1.5) node {$v_{\x}$}};
							
							\foreach \y in {2,5,7,10}
							\node[outvert] (o\y) at (\r + \y*360/\N:3.5 cm) {$3$};
							
							\foreach \y in {2,5,7,10}
							\draw (\y) -- (o\y);
							
							\foreach \y in {3,4,8,9}
							\draw (\y) -- (\r + \y*360/\N:\D cm);
							
							\draw[dashed] (3) to[out=146,in=180,distance=3cm] (4);
							\draw[dashed] (8) to[out=320,in=356,distance=3cm] (9);
							
							\node (i) at (0,-5) {(iii)};
							
						\end{scope}
					\end{tikzpicture}
					
					\caption{The contradictory configurations from Case 3.b of Lemma \ref{lem:good-10}.}
					\label{fig:good-10-3b}
				\end{figure}
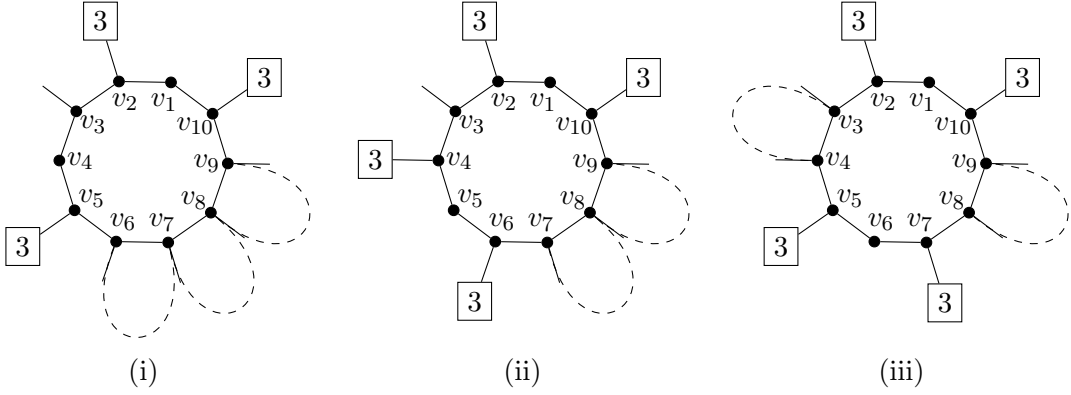
				
				If $\deg_G(v_5)=2$, then by Lemma \ref{lem:2-3-2}, all other neighbors of $v_4$ and $v_6$ are 3-vertices and thus, the only possible contributing pairs are $\{v_7, v_8\}, \{v_8, v_9\}$, so by Lemma \ref{lem:adj-bad-faces}, $y \leq 1$, which is a contradiction; see Figure \ref{fig:good-10-3b}(ii). 
				
				If $\deg_G(v_6)=2$, then by Lemma \ref{lem:2-3-2}, all other neighbors of $v_5$ and $v_7$ are 3-vertices and thus, the only possible contributing pairs are $\{v_3, v_4\}, \{v_8, v_9\}$, so by Lemma \ref{lem:adj-bad-faces}, $y \leq 2$, which is a contradiction; see Figure \ref{fig:good-10-3b}(iii).
				
			\end{description}
			
			\item[Case 4.] $x = 1$ and $y \geq 4$.\\
			Without loss of generality, let $v_1$ be the only 2-vertex of $f$ (since $x = 1$). By Lemma \ref{lem:good-3333}, the pairs $\{v_2, v_3\}$ and $\{v_9, v_{10}\}$ cannot be pairs of contributing vertices; see Figure \ref{fig:good-10-4}. Thus, the only possible pairs of contributing vertices are the six pairs $\{v_i, v_{i+1}\}$, $i \in \{3,\ldots, 8\}$. Since by Lemma \ref{lem:adj-bad-faces}, no two bad 8-faces share a contributing vertex, we have $y \leq 3$, which is a contradiction.
			
			\begin{figure}[h!!]
				\centering
				
				\begin{tikzpicture}
					[scale=0.7,
					vert/.style={circle,fill=black,draw=black, inner sep=0.05cm},
					s/.style={fill=black!15!white, draw=black!15!white, rounded corners},
					s2/.style={fill=white, draw=black!50!white, rounded corners},
					outvert/.style={rectangle,draw=black},
					outedge/.style={line width=1.5pt},
					dom_vert/.style={circle,draw=black,fill=white, inner sep=0.05cm}
					] 
					\pgfmathtruncatemacro{\N}{10}
					\pgfmathtruncatemacro{\R}{2}
					\pgfmathtruncatemacro{\D}{3}
					\pgfmathtruncatemacro{\r}{35}
					
					\clip (-3.5,-3.5) rectangle (3.5, 2.5); 
					
					\begin{scope}[scale=0.8]
						
						\foreach \x in {1,...,\N}
						\node[vert] (\x) at (\r + \x*360/\N:\R cm) {};
						\foreach \x [remember=\x as \lastx (initially 1)] in {1,...,\N,1}
						\path (\x) edge (\lastx);
						
						\draw \foreach \x in {1,...,\N} {(\r + \x*360/\N:1.5) node {$v_{\x}$}};
						
						\foreach \y in {2,3,4,5,6,7,8,9,10}
						\draw (\y) -- (\r + \y*360/\N:\D cm);
						
						\draw[dashed] (3) to[out=146,in=180,distance=3cm] (4);
						\draw[dashed] (4) to[out=180,in=212,distance=3cm] (5);
						\draw[dashed] (5) to[out=212,in=248,distance=3cm] (6);
						\draw[dashed] (6) to[out=248,in=284,distance=3cm] (7);
						\draw[dashed] (7) to[out=284,in=320,distance=3cm] (8);
						\draw[dashed] (8) to[out=320,in=356,distance=3cm] (9);
						
					\end{scope}
				\end{tikzpicture}
				
				\caption{The contradictory configuration from Case 4 of Lemma \ref{lem:good-10}.}
				\label{fig:good-10-4}
			\end{figure}
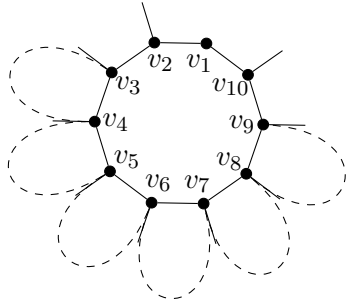
			
			\item[Case 5.] $x = 0$ and $y \geq 5$.\\
			Since $x = 0$, all vertices of $f$ are 3-vertices. By Lemma \ref{lem:adj-bad-faces}, no two bad 8-faces share a vertex on $f$. Thus $y \leq 5$, which imples $y = 5$, and without loss of generality, $\{v_1, v_2\}, \{v_3, v_4\}, \{v_5, v_6\}, \{v_7, v_8\}, \{v_9, v_{10}\}$ are pairs of contributing vertices; see Figure \ref{fig:good-10-5}. But this contradicts Lemma \ref{lem:good-3333}, since for example, both neighbors of the pair $\{v_1,v_2\}$ on $f$ are adjacent to one 2-vertex.
			
			\begin{figure}[h!!]
				\centering
				\begin{tikzpicture}
					[scale=0.7,
					vert/.style={circle,fill=black,draw=black, inner sep=0.05cm},
					s/.style={fill=black!15!white, draw=black!15!white, rounded corners},
					s2/.style={fill=white, draw=black!50!white, rounded corners},
					outvert/.style={rectangle,draw=black},
					outedge/.style={line width=1.5pt},
					dom_vert/.style={circle,draw=black,fill=white, inner sep=0.05cm}
					] 
					\pgfmathtruncatemacro{\N}{10}
					\pgfmathtruncatemacro{\R}{2}
					\pgfmathtruncatemacro{\D}{3}
					\pgfmathtruncatemacro{\r}{35}
					
					\clip (-3.5,-3) rectangle (3.5, 3.5); 
					
					\begin{scope}[scale=0.8]
						\filldraw[s2, rotate=0, shift={(-2.4,1.2)}] (0,0) rectangle (0.7,0.7){};
						\filldraw[s2, rotate=0, shift={(1.8,1.1)}] (0,0) rectangle (0.7,0.7){};
						
						\foreach \x in {1,...,\N}
						\node[vert] (\x) at (\r + \x*360/\N:\R cm) {};
						\foreach \x [remember=\x as \lastx (initially 1)] in {1,...,\N,1}
						\path (\x) edge (\lastx);
						
						\draw \foreach \x in {1,...,\N} {(\r + \x*360/\N:1.5) node {$v_{\x}$}};
						
						\foreach \y in {3,10}
						\node[vert] (o\y) at (\r + \y*360/\N:2.5 cm) {};
						
						\draw    (1) to[out=72,in=108,distance=3cm] (2);
						\node (l1) at (0,3) {bad};
						\draw (3) to[out=146,in=180,distance=3cm] (4);
						\node (l3) at (-3,1) {bad};
						\draw (5) to[out=212,in=248,distance=3cm] (6);
						\node (l5) at (-2,-2.5) {bad};
						\draw (7) to[out=284,in=320,distance=3cm] (8);
						\node (l7) at (1.8,-2.5) {bad};
						\draw (9) to[out=0,in=36,distance=3cm] (10);
						\node (l9) at (3,1) {bad};
						
					\end{scope}
				\end{tikzpicture}
				
				\caption{The contradictory configuration from Case 5 of Lemma \ref{lem:good-10}.}
				\label{fig:good-10-5}
			\end{figure}
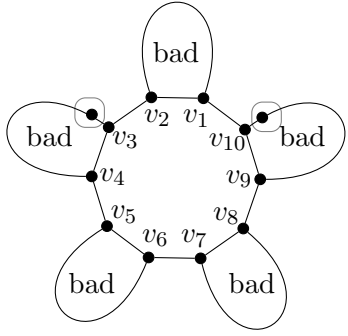
			
		\end{description}
		This concludes the proof of Lemma~\ref{lem:good-10}.
	\end{proof}

	\subsection{Discharging}
	\label{sec:discharging8}
	
	For $x \in V(G) \cup F(G)$, we define the following initial charge:
	$$\mu(x) = \begin{cases}
		2 \deg_G(x) - 6 & \text{if } x \in V(G),\\
		\ell(x) - 6 & \text{if } x \in F(G).
	\end{cases}$$
	Since $G$ is planar, Euler's formula implies that the initial charge of $G$ is
	$$\mu(G) = \sum_{x \in V(G) \cup F(G)} \mu(x) = -12.$$
	
	We use the following two discharging rules: 
	\begin{itemize}
		\item Every face $f$ sends charge 1 to each of the 2-vertices on the facial walk of $f$, counted with multiplicities.
		
		\item Every face sends charge $1$ to each of the bad 8-faces incident with it.
	\end{itemize}
	Let $\mu^*(x)$ denote the charge of vertices and faces after applying the discharging rules. 
	
	\begin{lemma}
		\label{lem:charge-positive-2}
		For every $x \in V(G) \cup F(G)$, $\mu^*(x) \geq 0$.
	\end{lemma}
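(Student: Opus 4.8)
The plan is to check $\mu^*(x)\ge 0$ separately for vertices and for faces, reusing the vertex argument of Lemma~\ref{lem:charge-positive-1} essentially verbatim and concentrating the real work on faces. For vertices: by Lemma~\ref{lem:no-deg-1} and connectivity every vertex has degree $2$ or $3$. A $3$-vertex neither sends nor receives charge, so $\mu^*=\mu=0$. A $2$-vertex has $\mu=-2$ and receives charge $1$ from each face on whose facial walk it lies; a $2$-vertex incident with two distinct faces gets $1+1$, and one incident with a single face appears twice on that facial walk and gets $2$, so in either case $\mu^*=-2+2=0$.

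For the faces, I would first record the charge balance. Vertices never send charge to faces, and the second rule sends charge only \emph{to} bad $8$-faces, so a face that is not a bad $8$-face receives nothing. Letting $x$ be the number of $2$-vertices on the facial walk of a $k$-face $f$ (counted with multiplicity) and $y$ the number of bad $8$-faces for which $f$ is the contributing face, the face $f$ sends out exactly $x+y$, and hence
\[ \mu^*(f) \ge (k-6) - x - y + \mathbf{1}[f\text{ is a bad }8\text{-face}]. \]
Since $g(G)\ge 8$ there are no faces of length below $8$, so only $k\ge 8$ remains. For $k\ge 11$ the face is not bad and Lemma~\ref{lem:good-11+} gives $x+y\le\lfloor k/2\rfloor$, so $\mu^*(f)\ge (k-6)-\lfloor k/2\rfloor=\lceil k/2\rceil-6\ge 0$. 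For $k=10$ and $k=9$, Lemmas~\ref{lem:good-10} and~\ref{lem:good-9} give $x+y\le 4$ and $x+y\le 3$, so $\mu^*(f)\ge 4-4=0$ and $\mu^*(f)\ge 3-3=0$ respectively.

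For a good $8$-face ($k=8$, not bad) I split on $y$: if $y=0$ then $x\le 2$ by Lemma~\ref{lem:8-face-weak} (as $f$ is not bad), and if $y\ge 1$ then $f$ is a contributing $8$-face and Lemma~\ref{lem:good-8} gives $x+y\le 2$; either way $\mu^*(f)=2-x-y\ge 0$.

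The remaining case, and the crux of the whole argument, is when $f$ is itself a bad $8$-face. Then $x=3$ (its degree sequence $(2,2,3,3,2,3,3,3)$ forces exactly three $2$-vertices), and $f$ receives at least $1$ from its contributing face (which exists because the edge joining its two contributing vertices lies on the bounding $8$-cycle of $f$ and so is not a bridge), whence $\mu^*(f)\ge 2-3-y+1=-y$. Thus I must prove $y=0$, i.e.\ that a bad $8$-face is never a contributing face. I would argue by contradiction: if a bad $8$-face $g$ had $f=v_1\cdots v_8$ as its contributing face, then $g$ would share with $f$ the edge between $g$'s two contributing vertices, which are consecutive $3$-vertices of $f$, hence one of $v_3v_4$, $v_6v_7$, $v_7v_8$. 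The edge $v_3v_4$ is excluded immediately by Lemma~\ref{lem:adj-bad-faces}, since $v_3,v_4$ are also contributing vertices of $f$. For $v_6v_7$ (respectively $v_7v_8$), the neighbor of $v_6$ (respectively $v_8$) that lies off $g$ is the $2$-vertex $v_5$ (respectively $v_1$) of $f$, while being a contributing vertex of $g$ forces $v_6$ (respectively $v_8$) to also have a $2$-vertex neighbor on $g$; this produces a $(2,3,2)$ sequence, contradicting Lemma~\ref{lem:2-3-2}. Hence $y=0$ and $\mu^*(f)\ge 0$. Establishing this $(2,3,2)$ obstruction (together with the unique-contributing-face bookkeeping) is the only nonroutine step; once it is in place, summing $\mu^*$ over $V(G)\cup F(G)$ gives $-12=\mu(G)=\mu^*(G)\ge 0$, the final contradiction, exactly as in the girth-$9$ case.
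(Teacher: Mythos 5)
Your proof is correct and follows the same discharging verification as the paper: the vertex argument is identical, and faces of length at least $11$, length $10$, length $9$, and good $8$-faces are handled with Lemmas~\ref{lem:good-11+}, \ref{lem:good-10}, \ref{lem:good-9}, and \ref{lem:8-face-weak}/\ref{lem:good-8} exactly as in the paper (your split of the good $8$-face case into $y=0$, handled by Lemma~\ref{lem:8-face-weak}, and $y\ge 1$, handled by Lemma~\ref{lem:good-8}, is the careful reading of what the paper compresses into one citation, since Lemma~\ref{lem:good-8} is stated only for contributing $8$-faces).

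The one point where you genuinely add something is the bad $8$-face case. The paper disposes of it in a single line: ``by Lemma~\ref{lem:adj-bad-faces}, $f$ cannot be a contributing face of another bad $8$-face.'' But Lemma~\ref{lem:adj-bad-faces}, as stated and as proved, forbids two adjacent bad $8$-faces from sharing a \emph{contributing} vertex; it therefore only rules out the sub-case where the putative bad face $g$ attaches to $f$ along $v_3v_4$, i.e.\ where $g$'s contributing vertices coincide with those of $f$. If instead $g$ attaches along $v_6v_7$ or $v_7v_8$, the shared vertices are not contributing vertices of $f$, and the lemma is silent. You close exactly this hole: being a contributing vertex of $g$ forces $v_6$ (resp.\ $v_8$) to have a $2$-vertex neighbor on $g$, while its neighbor off $g$ is the $2$-vertex $v_5$ (resp.\ $v_1$) of $f$, producing a $(2,3,2)$ sequence that contradicts Lemma~\ref{lem:2-3-2} (the same mechanism as the first assertion of Lemma~\ref{lem:good-3333}). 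So your write-up is, if anything, more complete than the paper's at the only delicate point of this lemma; everything else, including the final summation $-12=\mu(G)=\mu^*(G)\ge 0$, matches the paper's argument.
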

	
	\begin{proof}
		By Lemma~\ref{lem:no-deg-1}, $G$ has no vertex of degree $1$.
		A 2-vertex that is incident with two different faces receives charge 1 from each of them. A 2-vertex that is incident with only one face $f$ appears on the facial walk of $f$ twice, so it receives charge 2 from $f$.
		Thus $\mu^*(v) = (2 \cdot 2 -6) + 2 \cdot 1 = 0$. If $v$ is a vertex of degree 3, then $\mu^*(v) = \mu(v) = 0$.
		
		If $f$ is a $k$-face, $k \geq 11$, then by Lemma \ref{lem:good-11+}, the facial walk of $f$ contains at most $ \left \lfloor \frac{k}{2} \right \rfloor$ 2-vertices and bad 8-faces. Thus $$\mu^*(f) \geq (k-6) - \left \lfloor \frac{k}{2} \right \rfloor = \left \lceil \frac{k}{2} \right \rceil - 6 \geq 0$$ since $k \geq 11$.
		
		If $f$ is a 10-face, then by Lemma \ref{lem:good-10}, $f$ is incident with at most four 2-vertices and bad 8-faces. Hence, $\mu^*(f) \geq (10-6) - 4 = 0$. If $f$ is a 9-face, then by Lemma \ref{lem:good-9}, $f$ is incident with at most three 2-vertices and bad 8-faces. Hence, $\mu^*(f) \geq (9-6) - 3 = 0$. If $f$ is a good 8-face, then by Lemma \ref{lem:good-8}, $f$ is incident with at most two 2-vertices and bad 8-faces. Hence, $\mu^*(f) \geq (8-6) - 2 = 0$.
		
		If $f$ is a bad 8-face, then it is incident with three 2-vertices and one contributing face. By Lemma \ref{lem:adj-bad-faces}, it follows that $f$ cannot be a contributing face of another bad 8-face. So $\mu^*(f) = (8-6) - 3 + 1 = 0$.
	\end{proof}
	
	Clearly it holds that $$-12 = \mu(G) = \mu^*(G) \geq 0,$$ which is a contradiction. This concludes the proof of Theorem~\ref{thm:main}.

	\section{Different weights}
	\label{sec:weights}
	
	It is not clear how to measure how good a certain bound for the domination number of subcubic planar graphs is. The result in Theorem \ref{thm:main} minimizes the weight of 3-vertices, but there are other bounds we can prove with the same method. One of them is the following.
	
	\begin{theorem}
		\label{thm:alternative}
		If $G$ is a planar subcubic graph with girth at least $9$, then 
		$$17 \gamma(G) < 17 n_0(G) + 13n_1(G) + 9 n_2(G) + 6n_3(G).$$
	\end{theorem}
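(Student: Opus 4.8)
The plan is to rerun the girth-$9$ machinery of Section~\ref{sec:girth9} verbatim, replacing the weight function by $w_G(v)=17,13,9,6$ for $\deg_G(v)=0,1,2,3$ and the multiplier $20$ by $17$ in the \nameref{lem:key}. First I would recompute the analogue of Table~\ref{tab:costs}: an outside vertex with a single neighbour in $S$ now contributes $4$ when it has degree $1$ or $2$ but only $3$ when it has degree $3$ (while degree $2$ with two neighbours contributes $8$, and degree $3$ with two or three neighbours contributes $7$ or $11$). With this table I would re-verify, in the same order, that the minimal counterexample avoids the sequences $(1,2)$, $(1,3,1)$, $(1,3,2)$, $(1,3,3,1)$ and $(2,2,2)$. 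These all survive the reweighting: the $(1,\ast)$ configurations keep ample slack because a degree-$1$ vertex still carries weight $13$, while $(1,3,3,1)$ and $(2,2,2)$ close at the same equality-type bounds ($2\cdot 13+4\cdot 6-16=34=17\cdot 2$ and $3\cdot 9-8=19>17$).

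The genuinely delicate configurations are $(2,3,2)$ and ``no $1$-vertex'', and this is where I expect the real work to be. Because the $2$-vertex coefficient has dropped from $\frac{11}{20}$ to $\frac{9}{17}$, the naive reductions on $N_G[v_2]$ and on $N_G[u]$ lose their positive slack: for $(2,3,2)$ with a degree-$3$ apex one only gets $w_G(S)-c_G(S)\ge 30-16=14<17$. The first remedy I would apply is the refined cost estimate: whenever a $2$-vertex of $S$ has an outside neighbour $x$ that itself has a second $2$-neighbour, Lemma~\ref{lem:2-2-2} forces $\deg_G(x)=3$, dropping its contribution from $4$ to $3$. This already makes $(2,2,3,3,2,2)$ reducible ($4\cdot 9+2\cdot 6-14=34=17\cdot 2$), and, \emph{once $(2,3,2)$ is known to be absent}, it also rescues ``no $1$-vertex'': then each of the two degree-$3$ neighbours of $u$ has at most one degree-$2$ outside neighbour, so at most two of the four outside vertices cost $4$, giving $c_G(S)\le 2\cdot 4+2\cdot 3=14$ and $31-14=17=17\cdot 1$.

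Excluding $(2,3,2)$ itself is the main obstacle. I would split on the degree of the apex's third neighbour $w_2$. If $\deg_G(w_2)=2$ then $N_G[v_2]$ has only three outgoing edges and reduces comfortably ($9+6+9+9-12=21>17$), so the hard case is $\deg_G(w_2)=3$, where no purely local choice of $S$ works: enlarging $S$ to absorb the offending degree-$2$ outside neighbours always raises $\gamma(G[S])$ faster than it raises $w_G(S)$, and reducing an entire face of $2$-vertices also falls short. I therefore expect that the girth-$9$ proof for these weights needs the \emph{bad-face} device of Section~\ref{sec:girth8}, even though Theorem~\ref{thm:main} did not require it until girth $8$: one would classify the faces carrying a persistent $(2,3,2)$-pattern as bad, add the second discharging rule (every face sends charge $1$ to each incident bad face), and re-prove the face-count lemmas in the style of Lemmas~\ref{lem:good-8}, \ref{lem:good-9}, \ref{lem:good-10} and \ref{lem:good-11+}, so that on every $k$-face the number of $2$-vertices plus incident bad faces is at most $\lfloor k/2\rfloor$ (respectively at most $3$ and $4$ for $k=9,10$). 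The more precise version of Lemma~\ref{lem:cost-bound} and a case analysis using planarity and $g(G)\ge 9$ to cap clusters of $2$-vertices would drive those counts.

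The discharging bookkeeping is then unchanged, since it never refers to the weights: the initial charge $\mu(x)=2\deg_G(x)-6$ on vertices and $\ell(x)-6$ on faces sums to $-12$ by Euler's formula, and once the structural lemmas are in place the verification that $\mu^\ast(x)\ge 0$ for every vertex and face runs exactly as in Lemma~\ref{lem:charge-positive-2}, producing the contradiction $-12=\mu^\ast(G)\ge 0$. In short, everything except the $(2,3,2)$ configuration with a degree-$3$ apex is either identical to, or a routine reweighting of, the argument already given; that one configuration is where I expect to have to import the heavier discharging from the girth-$8$ analysis.
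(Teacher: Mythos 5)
Your reweighting analysis is correct as far as it goes: the new cost table is right, the configurations $(1,2)$, $(1,3,1)$, $(1,3,2)$, $(1,3,3,1)$, $(2,2,2)$ do survive with weights $(17,13,9,6)$, and $(2,3,2)$ with a degree-$3$ apex genuinely stops being reducible --- the paper says exactly this (its item 3 states that Lemma~\ref{lem:2-3-2} \emph{does not hold} for these weights). But your plan has two genuine gaps. The first is a circularity in ``no $1$-vertex''. Your bound $c_G(S)\le 2\cdot 4+2\cdot 3=14$ for $S=N_G[u]$ is derived from the assumption that $(2,3,2)$ is absent, yet your final plan never establishes that absence; on the contrary, you concede it cannot be excluded and propose to tolerate it via discharging. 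If $(2,3,2)$ may occur, each of $w_1,w_2$ can have two degree-$2$ outside neighbours, giving only $c_G(S)\le 16$ and $31-16=15<17$, so the reduction fails --- and without Lemma~\ref{lem:no-deg-1} the entire discharging argument collapses, since a $1$-vertex has charge $-4$ and no rule feeds it. The paper resolves this by first proving two auxiliary configurations reducible, $(2,2,3,3_3^1)$ and $(_2^2 3, 3_3^1)$; these involve a nearby $1$-vertex (weight $13$), which is why they close even though full $(2,3,2)$ exclusion does not, and they are exactly strong enough to cap the number of degree-$2$ outside neighbours in the no-$1$-vertex reduction.

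The second gap is that the bad-face device cannot carry the load you assign to it. Using only the configurations you have actually established ($(2,2,2)$, the $(1,\ast)$ family, and $(2,2,3,3,2,2)$), a $9$-face with degree pattern $(2,2,3,2,2,3,2,2,3)$ is still possible: it has no $1$-vertex, no $(2,2,2)$, and --- having no two consecutive $3$-vertices --- no $(2,2,3,3,2,2)$ either. Such a face carries six $2$-vertices against an initial charge of $3$, a deficit of $3$, which a rule modeled on the girth-$8$ device (one unit from a contributing face, where the deficit is exactly $1$) cannot repair; and the surrounding faces can be equally deficient. This is precisely why the paper's proof of Theorem~\ref{thm:alternative} does \emph{not} import bad faces. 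Instead (items 3--8 of its proof sketch) it first shows no $3$-vertex has three degree-$2$ neighbours, then proves the longer configurations $(_2^3 3, 2, 2, 3_2^3)$, $(2,2,3,2,2,3)$, $(_2^2 3, 3_2^2)$, $(2,2,3,2,3,2)$, $(2,2,3,3,2,2)$, $(3,2,3,2,3)$ reducible, which together recover Lemma~\ref{lem:6-path}; from that, Lemmas~\ref{lem:11+face}, \ref{lem:10-face} and \ref{lem:9-face} are re-proved (with more technical arguments, since Lemma~\ref{lem:2-3-2} is unavailable), and the original one-rule discharging of Section~\ref{sec:discharging9} then goes through unchanged, there being no $8$-faces at girth $9$. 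In effect, you would have to prove essentially that same list of longer configurations anyway just to bound the per-face deficit, at which point the bad-face machinery becomes unnecessary.
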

	
	The proof of Theorem \ref{thm:alternative} uses the same ideas as the proof of Theorem \ref{thm:main} with the following modifications. Let $w_i$ be the coefficient of $n_i(G)$ in  $$\gamma(G) \le w_0n_0(G)+w_1n_1(G)+w_2n_2(G)+w_3n_3(G).$$
	\begin{enumerate}
		\item Lemmas \ref{lem:1-2} -- \ref{lem:2-2-2} (and their proofs) stay the same.
		\item To prove Lemma \ref{lem:no-deg-1}, additional configurations must be considered first, to deduce more information about the degree of the vertices in $N_G(S) \setminus S$. In particular, we first prove that the sequences $(2,2,3,3_3^1)$ and $(_2^2 3, 3_3^1)$ are reducible, and only then proceed to proving Lemma \ref{lem:no-deg-1}. The reason for this lies in the fact that in Theorem \ref{thm:alternative} $w_2 - w_3 < w_1 - w_2$, while in Theorem \ref{thm:main} these two differences are the same.
		\item Lemma \ref{lem:2-3-2} \textit{does not hold}. Instead, we first prove that there is no 3-vertex with all three neighbors of degree $2$, and then we prove that $(_2^3 3, 2, 2, 3_2^3)$, $(2,2,3,2,2,3)$, $(_2^2 3, 3_2^2)$, $(2,2,3,2,3,2)$, $(2,2,3,3,2,2)$, and $(3,2,3,2,3)$ are reducible, to conclude the result in Lemma \ref{lem:6-path}. 
		\item Lemma \ref{lem:short-path} is modified to consider only $9^+$-faces. Since we do not have Lemma \ref{lem:2-3-2}, the proof becomes much more technical.
		\item Lemma \ref{lem:11+face} stays the same.
		\item Lemmas \ref{lem:10-face} and \ref{lem:9-face} stay the same, but the proofs are slightly different (since we do not have Lemma \ref{lem:2-3-2}).
		\item Since in Theorem \ref{thm:alternative}, we only consider graphs with girth at least $9$, Lemmas \ref{lem:8-face-weak} -- \ref{lem:good-10} are not needed.
		\item The discharging remains the same, except that we do not consider bad 8-faces and we restrict our focus to $9^+$-faces only (dealing with 8-faces in this case seems to be more tricky than in Theorem \ref{thm:main}).
	\end{enumerate}
	
	Writing down the conditions needed to satisfy the steps in our proof using weights $w_0, \ldots, w_3$ instead of concrete values for weights, we obtain a family of bounds instead of only the bound from Theorem \ref{thm:main} (this can be easily done using a computer). As already mentioned, minimizing $w_3$ gives the set of weights from Theorem \ref{thm:main}, and to obtain the weights from Theorem \ref{thm:alternative}, we can for example minimize $w_2 + 6 w_3$.
	
	\section{Conclusion}
	\label{sec:further}
	
	For subcubic planar graphs with a smaller girth restriction, it is not clear what the best upper bound on the domination number is. The method of Theorem~\ref{thm:main} might be able to be pushed to include girths $6$ and $7$ with significantly more work, but there seem to be barriers of our method to handling cycles of length less than $6$.
	
	For subcubic planar graphs with no girth restriction, the upper bound of Theorem~\ref{thm:main} no longer holds.
	Through an exhaustive computer search of subcubic planar graphs on at most $19$ vertices (using SageMath~\cite{sagemath} and a customized version of \texttt{geng} from the \texttt{nauty} package~\cite{nauty}),
	we found two graphs (with girth $3$ or $4$) that exceed the upper bound of Theorem~\ref{thm:main} (see 
	Figure~\ref{fig:exceed}).
	Thus any theorem proving the upper bound of Theorem~\ref{thm:main} must require at least girth $5$.
	
	\begin{figure}[h!!]
		\centering
		
		\begin{tikzpicture}
			[
			scale=0.7,
			vert/.style={circle, fill=black, draw=black, inner sep=0.05cm}, 
			s/.style={fill=black!15!white, draw=black!15!white, rounded corners},
			outvert/.style={rectangle, draw=black},
			outedge/.style={line width=1.5pt},
			dom_vert/.style={circle,draw=black,fill=white, inner sep=0.05cm}
			]
			\begin{scope}
				\node[vert] (0) at (0,0) {};
				\node[vert] (1) at (2,0) {};
				\node[dom_vert] (2) at (0,2) {};
				\node[dom_vert] (3) at (1,-1) {};
				\node[vert] (4) at (2,2) {};
				\draw (0) -- (2) -- (4) -- (1) -- (3) -- (0) -- (4);
				\draw (2) -- (1);
				\node (n1) at (1,0) {$G_1$};
			\end{scope}
			
			\begin{scope}[xshift = 5cm]
				\node[vert] (0) at (0,0) {};
				\node[vert] (1) at (2,0) {};
				\node[dom_vert] (2) at (0,2) {};
				\node[dom_vert] (3) at (1,-1) {};
				\node[dom_vert] (4) at (2,2) {};
				
				\node[vert] (8') at (0,3) {};
				\node[vert] (9') at (2,3) {};
				\node[vert] (0') at (0,5) {};
				\node[vert] (1') at (2,5) {};
				\node[dom_vert] (5') at (1,6) {};
				
				\draw (0) -- (2);
				\draw (4) -- (1) -- (3) -- (0) -- (4);
				\draw (2) -- (1);
				
				\draw (2) -- (8') -- (0') -- (5') -- (1') -- (9') -- (4);
				\draw (8') -- (1');
				\draw (0') -- (9');
				
				\node (n2) at (1,2.5) {$G_2$};
			\end{scope}

		\end{tikzpicture}
		
		\caption{The only subcubic planar graphs with no girth restriction and at most $19$ vertices that exceed the upper bound of Theorem~\ref{thm:main}. The graph on the left has girth $3$, domination number $\gamma(G_1) = 2$, and the upper bound of Theorem~\ref{thm:main} gives $\frac{39}{20}$.
			The graph on the right has girth $4$, domination number $\gamma(G_2)=4$, and the upper bound gives $\frac{39}{10}$. 
			Examples of minimal dominating sets are marked with white vertices.
			Note that both graphs are planar.}
		\label{fig:exceed}
	\end{figure}
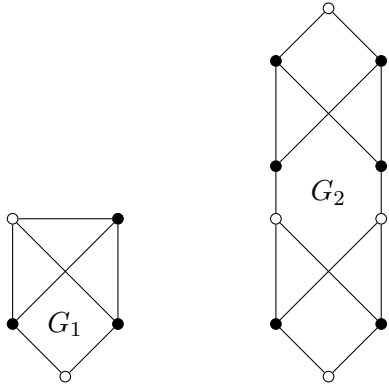
	
	However, we believe that these are sporadic small examples.
	The question remains of what the best asymptotic upper bound is for subcubic planar graphs with no girth restriction.
	By considering disjoint copies of $K_1$, $K_2$, $C_4$, or $K_3 \,\square\, K_2$, no upper bound can improve the individual coefficients in $\gamma(G) \leq n_0(G) + \frac12 n_1(G) + \frac12 n_2(G) + \frac13 n_3(G)$.
	However, this bound is not possible in general, as can be seen by the following two infinite families.
	
	For $n\geq 3$, let $C_n^\star$ be the graph obtained by attaching a leaf to each vertex of $C_n$. The graph $C_n^\star$ has $2n$ vertices ($n$ 1-vertices and $n$ 3-vertices) and $\gamma(C_n^\star) = n$. Then the family $\{C_n^\star\}_{n=3}^\infty$ shows that an upper bound on the domination number cannot simultaneously have a coefficient of $\frac{1}{2}$ on $n_1(G)$ and a coefficient of $\frac{1}{3}$ on $n_3(G)$. More precisely, the coefficient on $n_1(G)$ has to be at least $1 - w_3$, where $w_3$ is the coefficient on $n_3(G)$. For example, if $w_3 = \frac{7}{20}$, the best we can hope for is a coefficient of $\frac{13}{20}$ on $n_1(G)$.
	
	For $n \geq 3$, let $H_n$ be the graph obtained from the cycle $C_n$ by attaching the graph $G_2$ from Figure~\ref{fig:exceed} to each vertex with an edge (see Figure~\ref{fig:family} for a picture of $H_6$). The graph $H_n$ has $11n$ vertices: $n$ $2$-vertices and $10n$ $3$-vertices. 
	Since $\gamma(G_2) = 4$ (see Figure~\ref{fig:exceed}) and vertices adjacent to the cycle $C_n$ can be in the dominating set, we have $\gamma(H_n) \leq 4n$.
	To see that $\gamma(H_n) \geq 4n$, notice that each gadget consisting of $G_2$ and the adjacent vertex on $C_n$ contains at least four vertices from every minimal dominating set of $H_n$.
	Thus, $\gamma(H_n) = 4n$.
	Note that the family $\{H_n\}_{n=3}^{\infty}$ shows that an upper bound on the domination number cannot simultaneously have a coefficient of $\frac{1}{2}$ on $n_2(G)$ and a coefficient of $\frac{1}{3}$ on $n_3(G)$.
	
	\begin{figure}[h!!]
		\centering
		
		\begin{tikzpicture}
			[
			scale=0.5,
			vert/.style={circle, fill=black, draw=black, inner sep=0.05cm}, 
			s/.style={fill=black!15!white, draw=black!15!white, rounded corners},
			outvert/.style={rectangle, draw=black},
			outedge/.style={line width=1.5pt},
			dom_vert/.style={circle,draw=black,fill=white, inner sep=0.05cm}
			]
			
			\node[vert] (a0) at (0,0) {};
			\node[vert] (a1) at (2,0) {};
			\node[vert] (a2) at (0,2) {};
			\node[vert] (a3) at (1,-1) {};
			\node[vert] (a4) at (2,2) {};
			
			\node[vert] (a8') at (0,3) {};
			\node[vert] (a9') at (2,3) {};
			\node[vert] (a0') at (0,5) {};
			\node[vert] (a1') at (2,5) {};
			\node[vert] (a5') at (1,6) {};
			
			\draw (a0) -- (a2);
			\draw (a4) -- (a1) -- (a3) -- (a0) -- (a4);
			\draw (a2) -- (a1);
			
			\draw (a2) -- (a8') -- (a0') -- (a5') -- (a1') -- (a9') -- (a4);
			\draw (a8') -- (a1');
			\draw (a0') -- (a9');
			
			\node[vert] (b0) at (4,0) {};
			\node[vert] (b1) at (6,0) {};
			\node[vert] (b2) at (4,2) {};
			\node[vert] (b3) at (5,-1) {};
			\node[vert] (b4) at (6,2) {};
			
			\node[vert] (b8') at (4,3) {};
			\node[vert] (b9') at (6,3) {};
			\node[vert] (b0') at (4,5) {};
			\node[vert] (b1') at (6,5) {};
			\node[vert] (b5') at (5,6) {};
			
			\draw (b0) -- (b2);
			\draw (b4) -- (b1) -- (b3) -- (b0) -- (b4);
			\draw (b2) -- (b1);
			
			\draw (b2) -- (b8') -- (b0') -- (b5') -- (b1') -- (b9') -- (b4);
			\draw (b8') -- (b1');
			\draw (b0') -- (b9');
			
			\node[vert] (c0) at (8,0) {};
			\node[vert] (c1) at (10,0) {};
			\node[vert] (c2) at (8,2) {};
			\node[vert] (c3) at (9,-1) {};
			\node[vert] (c4) at (10,2) {};
			
			\node[vert] (c8') at (8,3) {};
			\node[vert] (c9') at (10,3) {};
			\node[vert] (c0') at (8,5) {};
			\node[vert] (c1') at (10,5) {};
			\node[vert] (c5') at (9,6) {};
			
			\draw (c0) -- (c2);
			\draw (c4) -- (c1) -- (c3) -- (c0) -- (c4);
			\draw (c2) -- (c1);
			
			\draw (c2) -- (c8') -- (c0') -- (c5') -- (c1') -- (c9') -- (c4);
			\draw (c8') -- (c1');
			\draw (c0') -- (c9');
			
			\node[vert] (a) at (1,-2) {};
			\node[vert] (b) at (5,-2) {};
			\node[vert] (c) at (9,-2) {};
			
			\draw (a) -- (a3);
			\draw (b) -- (b3);
			\draw (c) -- (c3);
			
			\draw (a) -- (b) -- (c);

			\node[vert] (Da0) at (0,-11) {};
			\node[vert] (Da1) at (2,-11) {};
			\node[vert] (Da2) at (0,-9) {};
			\node[vert] (Da3) at (1,-12) {};
			\node[vert] (Da4) at (2,-9) {};
			
			\node[vert] (Da8') at (0,-8) {};
			\node[vert] (Da9') at (2,-8) {};
			\node[vert] (Da0') at (0,-6) {};
			\node[vert] (Da1') at (2,-6) {};
			\node[vert] (Da5') at (1,-5) {};
			
			\draw (Da0) -- (Da2);
			\draw (Da4) -- (Da1) -- (Da3) -- (Da0) -- (Da4);
			\draw (Da2) -- (Da1);
			
			\draw (Da2) -- (Da8') -- (Da0') -- (Da5') -- (Da1') -- (Da9') -- (Da4);
			\draw (Da8') -- (Da1');
			\draw (Da0') -- (Da9');
			
			\node[vert] (Db0) at (4,-11) {};
			\node[vert] (Db1) at (6,-11) {};
			\node[vert] (Db2) at (4,-9) {};
			\node[vert] (Db3) at (5,-12) {};
			\node[vert] (Db4) at (6,-9) {};
			
			\node[vert] (Db8') at (4,-8) {};
			\node[vert] (Db9') at (6,-8) {};
			\node[vert] (Db0') at (4,-6) {};
			\node[vert] (Db1') at (6,-6) {};
			\node[vert] (Db5') at (5,-5) {};
			
			\draw (Db0) -- (Db2);
			\draw (Db4) -- (Db1) -- (Db3) -- (Db0) -- (Db4);
			\draw (Db2) -- (Db1);
			
			\draw (Db2) -- (Db8') -- (Db0') -- (Db5') -- (Db1') -- (Db9') -- (Db4);
			\draw (Db8') -- (Db1');
			\draw (Db0') -- (Db9');
			
			\node[vert] (Dc0) at (8,-11) {};
			\node[vert] (Dc1) at (10,-11) {};
			\node[vert] (Dc2) at (8,-9) {};
			\node[vert] (Dc3) at (9,-12) {};
			\node[vert] (Dc4) at (10,-9) {};
			
			\node[vert] (Dc8') at (8,-8) {};
			\node[vert] (Dc9') at (10,-8) {};
			\node[vert] (Dc0') at (8,-6) {};
			\node[vert] (Dc1') at (10,-6) {};
			\node[vert] (Dc5') at (9,-5) {};
			
			\draw (Dc0) -- (Dc2);
			\draw (Dc4) -- (Dc1) -- (Dc3) -- (Dc0) -- (Dc4);
			\draw (Dc2) -- (Dc1);
			
			\draw (Dc2) -- (Dc8') -- (Dc0') -- (Dc5') -- (Dc1') -- (Dc9') -- (Dc4);
			\draw (Dc8') -- (Dc1');
			\draw (Dc0') -- (Dc9');
			
			\node[vert] (Da) at (1,-4) {};
			\node[vert] (Db) at (5,-4) {};
			\node[vert] (Dc) at (9,-4) {};
			
			\draw (Da) -- (Da5');
			\draw (Db) -- (Db5');
			\draw (Dc) -- (Dc5');
			
			\draw (a) -- (Da) -- (Db) -- (Dc) -- (c);
			
		\end{tikzpicture}
		
		\caption{The graph $H_6$. Note that the graph is planar; it is drawn with edge-crossings only to emphasize the symmetry present.}
		\label{fig:family}
	\end{figure}
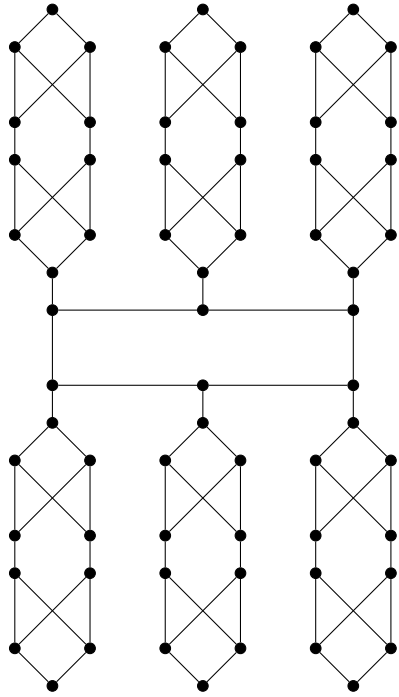

	The upper bound from Theorem~\ref{thm:main} gives $\gamma(H_n) < \frac{81n}{20}$.
	The bound would become tight for $H_n$ if the coefficient on $n_2(G)$ is lowered to $\frac{1}{2}$.
	We conjecture that this is optimal:
	\begin{conjecture}
		There exists a constant $N$ such that any connected
		subcubic planar graph with at least $N$ vertices satisfies
		$$ \gamma(G) \le n_0(G) + \frac{13}{20} n_1(G) + \frac{1}{2} n_2(G) + \frac{7}{20} n_3(G).$$
	\end{conjecture}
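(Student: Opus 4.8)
The plan is to run the same weight--cost--discharging machinery developed in Sections~\ref{sec:framework}--\ref{sec:girth8}, but with the rescaled weight function assigning $w_G(v)$ the values $20,13,10,7$ for $\deg_G(v)=0,1,2,3$ (so that the asserted bound is exactly $20\gamma(G)\le w_G(G)$), and with the girth hypothesis dropped. Since the two infinite families $\{C_n^\star\}$ and $\{H_n\}$ attain equality with these weights, the bound is non-strict, so I would take as a minimal counterexample the smallest connected subcubic planar graph $G$ with $20\gamma(G) > w_G(G)$. Every graph of smaller order then satisfies $20\gamma \le w$, and because $w_G$ is integer-valued the violation forces $20\gamma(G) \ge w_G(G)+1$; feeding this into the computation behind the \nameref{lem:key} reproduces $20\gamma(G[S]) > w_G(S) - c_G(S)$, so the reduction tool survives the passage to a non-strict bound. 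The contribution table (Table~\ref{tab:costs}) must be recomputed from the new weight gaps $w_0-w_1 = 7$, $w_1-w_2 = 3$, $w_2-w_3 = 3$; in particular a single edge leaving $S$ into a $2^+$-vertex now costs only $3$ rather than $4$, while creating an isolated vertex remains expensive. The delicate bookkeeping is that a reduction from $G$ to $G-S$ is valid only when $G-S$ is itself governed by the bound; since small graphs such as $G_1,G_2$ of Figure~\ref{fig:exceed} violate it, the reducibility lemmas are guaranteed only once the pieces produced are large, which is precisely what forces the ``at least $N$ vertices'' hypothesis and a finite exceptional family.

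The second ingredient is a new, and necessarily much longer, list of reducible configurations, because without a girth hypothesis short cycles are allowed and must be controlled. I would first re-establish the degree-based reductions (no $(1,2)$, no long degree-$2$ paths, and the analogues of Lemmas~\ref{lem:1-3-1}--\ref{lem:no-deg-1}) under the new weights, then add configurations pinning down triangles, $4$-cycles and $5$-cycles that carry many low-degree vertices. A crucial sanity constraint is that every configuration appearing in the tight families must remain \emph{irreducible}: the pendant-leaf-on-a-cycle structure of $C_n^\star$ and the $G_2$-gadget attached to $C_n$ in $H_n$ have to survive, since there equality holds, and this fixes exactly how far the reduction list may be pushed. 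I expect the $H_n$ gadget (girth $4$) to be the sharpest test, as any reducible configuration must avoid being present in it.

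For the discharging step I would keep the initial charges $\mu(x) = 2\deg_G(x)-6$ on vertices and $\mu(x) = \ell(x)-6$ on faces, so that $\mu(G) = -12$ by Euler's formula, and redesign the rules so that $\mu^*(x)\ge 0$ for every vertex and face of a large irreducible $G$. With girth at least $8$ every face had nonnegative initial charge and only $2$-vertices needed feeding; now triangles, $4$-faces and $5$-faces carry charge $-3,-2,-1$ and become the sinks, while only $7^+$-faces carry surplus. The rules must therefore route charge from $7^+$-faces (and possibly from the neutral $3$-vertices) to the short faces and the $2$-vertices, and the reducibility lemmas must be exactly strong enough to guarantee that each short face is adjacent to enough surplus. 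Designing these rules so that the tight families land at $\mu^*=0$ everywhere is the quantitative heart of the argument.

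The main obstacle is exactly the one flagged in Section~\ref{sec:further}: short faces. With girth below $6$ there is no longer a guaranteed local surplus to pay for triangles and $4$-faces, and a subcubic planar graph can contain many short faces clustered together (as in the $H_n$ gadgets), so a purely local discharging rule may fail to certify $\mu^*(f)\ge 0$. I expect that overcoming this will require either a global (second-neighborhood) discharging scheme or a genuinely new reduction that contracts or reroutes short cycles while tracking the weight change, together with a careful, probably computer-assisted, verification of the finite exceptional family (of which $G_1$ and $G_2$ are members) that determines the constant $N$. Absent such an idea, the method of Theorem~\ref{thm:main} stalls precisely at girth $5$ and below, which is why the statement remains a conjecture.
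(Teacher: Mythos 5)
This statement is one the paper itself leaves as a conjecture: Section~\ref{sec:further} offers no proof, only the two tight families $\{C_n^\star\}$ and $\{H_n\}$ as evidence that the coefficients cannot be improved. Your submission is likewise not a proof but a research programme, and you concede as much in your final sentence. So the honest verdict is that there is nothing here to verify against the paper, and the genuine gaps are exactly the ones you name without closing.

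Two of them deserve to be called out concretely. First, the minimal-counterexample framework of Lemma~\ref{lem:key} does not transfer cleanly to a statement with an ``at least $N$ vertices'' hypothesis and a finite exceptional family: the induction requires that $20\gamma(G-S) \le w_{G-S}(G-S)$, but $G-S$ may be disconnected with small components, some of which (like $G_1$ or $G_2$ of Figure~\ref{fig:exceed}) violate the bound, and one would have to reprove every reducible configuration with an error term accounting for how many exceptional components can be created --- you flag this but give no mechanism for it. Second, and more fundamentally, the discharging step collapses: with girth as low as $3$, faces of length $3$, $4$, and $5$ carry initial charge $-3$, $-2$, $-1$, and in graphs like the $H_n$ gadgets such faces cluster so that no local redistribution from $7^+$-faces can make every face nonnegative; you identify this as ``the quantitative heart'' but propose no rule set, no reducible configurations controlling short cycles, and no verification that the tight families land at $\mu^* = 0$. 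Since the entire content of a discharging proof lies in exhibiting those rules and configurations and checking them, what you have written is a plausible description of why the problem is hard --- consistent with the paper's own remarks at the end of Section~\ref{sec:further} --- rather than progress toward a proof.
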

	
	Since there are no cubic planar graphs with girth at least $8$, the bound in Theorem~\ref{thm:main} does not apply to cubic planar graphs. To make progress towards Conjectures~\ref{conj:2concubicplanar} and \ref{conj:theother}, and considering the new result \cite{dorbec-henning-1/3}, we are left to consider cubic planar graphs with small girth.  Perhaps a partial result could be obtained by allowing small cycles of certain lengths (such as $3$-cycles) and forbidding other small cycles (such as cycle lengths $4$ through $8$).

	\section*{Acknowledgements}
	
	This work was started at the 2022 Graduate Research Workshop in Combinatorics, which was supported in part by NSF grant 1953985 and a generous award from the Combinatorics Foundation.
	We thank Alvaro Carbonero and Rebecca Robinson for initial discussions.
	
	Eun-Kyung Cho was supported by the Basic Science Research Program through the National Research Foundation of Korea (NRF) funded by the Ministry of Education (No. RS-2023-00244543).
	Vesna Ir\v{s}i\v{c} acknowledges the financial support from the Slovenian Research Agency (P1-0297, J1-2452, N1-0285 and Z1-50003) and the European Union (ERC, KARST, project number 101071836).
	
	\bibliographystyle{abbrv}

\end{document}